\newtheorem{theorem}{Theorem}
\newtheorem{definition}[theorem]{Definition}
\newtheorem{proposition}[theorem]{Proposition}
\newtheorem{corollary}[theorem]{Corollary}
\newtheorem{lemma}[theorem]{Lemma}
\theoremstyle{remark}
\newtheorem{remark}[theorem]{Remark}
\numberwithin{theorem}{section}
\numberwithin{equation}{section}
\def\section{\@startsection{section}{1}%
  \z@{.7\linespacing\@plus\linespacing}{.5\linespacing}%
  {\small\scshape\centering}}
\renewcommand{\subsubsection}{\@startsection{subsubsection}{3}{\z@}%
     {.2\linespacing\@plus\linespacing}{.2\linespacing}
    {\reset@font\normalsize\bf}}
\DeclareMathOperator{\supp}{supp}
\begin{document}
\title[Maximizers for Wave Strichartz Inequalities]{Maximizers for the Strichartz Inequalities for The Wave Equation}
\author{Aynur Bulut}
\address{University of Texas at Austin, Department of Mathematics\\1 University Station C1200, Austin, TX 78712}
\email{abulut@math.utexas.edu}
\subjclass[2000]{35L05}
\thanks{\today}
\begin{abstract}
We prove the existence of maximizers for Strichartz inequalities for the wave equation in dimensions $d\geq 3$.  Our approach follows the scheme given by Shao in \cite{ShaoSchrodinger} which obtains the existence of maximizers in the context of the Schr\"odinger equation. The main tool that we use is the linear profile decomposition for the wave equation which we prove in $\mathbb{R}^d$, $d\geq 3$, extending the profile decomposition result of Bahouri and Gerard \cite{BahouriGerard}, previously obtained in $\mathbb{R}^3$.

\end{abstract}
\maketitle
\allowdisplaybreaks
\section{INTRODUCTION}
We consider the initial value problem for the wave equation:
\begin{equation}
\left\lbrace\begin{split}
\label{lab1}
\partial_{tt}u-\Delta u&=0,\\
\hfill u(0)&=u_0\in \dot{H}^1(\mathbb{R}^d),\\
\hfill \partial_t u(0)&=u_1\in L^2(\mathbb{R}^d),
\end{split}\right.
\end{equation}
where $u(t,x)$ is a complex valued function on $\mathbb{R}\times\mathbb{R}^d$, $d\geq 3$.

The Strichartz inequalities associated to ($\ref{lab1}$) state that for a suitably chosen pair $(q,r)$ there exists a constant $W_{q,r}>0$ such that \begin{align}\label{lab2}\lVert u(t,x)\rVert_{L_t^qL_x^r(\mathbb{R}\times\mathbb{R}^d)}\leq W_{q,r}\lVert (u_0, u_1)\rVert_{\dot{H}^1\times L^2(\mathbb{R}^d)}\end{align}
whenever $u(t,x)$ solves ($\ref{lab1}$).

Estimates similar to the above
Strichartz inequalities were first introduced by Segal \cite{Segal}, and were further
studied by Strichartz \cite{Strichartz}.  Subsequently, a substantial
literature has developed to examine the most general forms of these
estimates. (See, e.g. the work of Ginibre and Velo \cite{GinibreVelo},
Keel and Tao \cite{KeelTao}).

The recent study of sharp constants and existence of maximizers for the
Strichartz inequalities began with the study of the
Schr\"odinger equation,\begin{align}i\partial_t u+\Delta u=0.\label{lab3}\end{align}  For this equation, much of the work has focused on
dimensions $1$ and $2$.  In dimension $1$, Kunze \cite{Kunze}, by using an application of concentration compactness technique, proved that the Strichartz inequality associated to ($\ref{lab3}$) has a maximizer.
Subsequently, Foschi \cite{Foschi}, working in dimensions $1$ and $2$, explicitly determined the sharp constants, and
characterized the maximizers for the Strichartz inequality for both the
Schr\"odinger and wave equations. Also in dimensions $1$ and $2$, but
independent of Foschi's work, Hundertmark and Zharnitsky \cite{HundertmarkZharnitsky} showed the
same characterization of maximizers for the Schr\"odinger equation by obtaining a new representation of the Strichartz integral. Related to this literature, following the arguments in \cite{HundertmarkZharnitsky}, Carneiro \cite{Carneiro} obtained a sharp inequality for the Strichartz norm, which admits only Gaussian maximizers and which obtains the sharp forms of the classical Strichartz inequality results in \cite{HundertmarkZharnitsky} and \cite{Foschi} as corollaries. Also recently, Bennett, Bez, Carbery and Hundertmark \cite{BennettBezCarberyHundertmark} proved a monotonicity of formula for the classical Strichartz norm as the initial data evolves under a certain quadratic heat-flow in $d=1,2$.

Another approach to prove the existence of maximizers is based on the profile decomposition.  This idea was employed by Shao in \cite{ShaoSchrodinger}, where he used linear profile decomposition results, given by B\'egout and Vargas in \cite{BegoutVargas},
to establish the existence of maximizers for the Schr\"odinger equation.  In \cite{ShaoAiry} he also established the linear profile decomposition for the Airy equation and obtained a dichotomy result on the existence of maximizers for the symmetric Airy-Strichartz inequality.

 The profile decomposition was introduced by Bahouri and Gerard \cite{BahouriGerard} in the context of the wave equation, in $\mathbb{R}^3$. The idea of this decomposition is connected to the concentration compactness method of P.L. Lions and the bubble decomposition for elliptic equations (see \cite{BrezisCoron} and \cite{Struwe}).  For the Schr\"odinger equation, the linear profile decomposition was independently proved for $d=2$ by Merle and Vega \cite{MerleVega}.  Carles and Keraani \cite{CarlesKeraani} treated the one-dimensional case, while in higher dimensions the result was obtained by Begout and Vargas in \cite{BegoutVargas}.  Recently, the profile decomposition has been used to prove a number of remarkable results.  In particular, it has been used as a key tool in establishing the concentration compactness/rigidity method introduced by Kenig and Merle in \cite{KenigMerleNLS} and \cite{KenigMerleWave}.  It was also used in the works of Tao, Visan and Zhang \cite{TaoVisanZhang} and  Killip, Tao and Visan \cite{KillipTaoVisan} as one of the main ingredients in proving the existence of minimal kinetic energy blowup solutions.

In this paper, inspired by the work of Shao in \cite{ShaoSchrodinger}, we establish the existence of maximizers for the Strichartz inequalities ($\ref{lab2}$) in dimensions $d\geq 3$.  The main tool that we use is the linear profile decomposition, which was previously obtained for $d=3$ by Bahouri and Gerard in \cite{BahouriGerard}.  Here, we extend this decomposition to dimensions $d\geq 3$ based on the arguments given in \cite{BahouriGerard}, \cite{Keraani} and \cite{Gerard}.  In particular, we prove the following version (for an analogous statement see Lemma $4.3$ in Kenig and Merle \cite{KenigMerleWave}):

\begin{theorem}
\label{lab4}
Let $(u_{0,n},u_{1,n})_{n\in\mathbb{N}}$ be a bounded sequence in $\dot{H}^1\times L^2(\mathbb{R}^d)$ with $d\geq 3$.  Then there exists a subsequence of $(u_{0,n},u_{1,n})$ (still denoted $(u_{0,n},u_{1,n})$), a sequence $(V_0^j,V_1^j)_{j\in\mathbb{N}}\subset \dot{H}^1\times L^2(\mathbb{R}^d)$ and a sequence of triples $(\epsilon_n^j,x_n^j,t_n^j)\in \mathbb{R^+}\times\mathbb{R}^d\times\mathbb{R}$ such that for every $j\neq j'$,
\begin{align}
\label{lab5}\frac{\epsilon_n^j}{\epsilon_n^{j'}}+\frac{\epsilon_n^{j'}}{\epsilon_n^j}+\frac{|t_n^j-t_n^{j'}|}{\epsilon_n^j}+\frac{|x_n^j-x_n^{j'}|}{\epsilon_n^j}\mathop{\longrightarrow}_{n\rightarrow\infty}
\infty,
\end{align}
and for every $l\geq 1$, if $V^j=S(t)(V_0^j,V_1^j)$ and $V_n^j(t,x)=\frac{1}{(\epsilon_n^j)^\frac{d-2}{2}}V^j\left(\frac{t-t_n^j}{\epsilon_n^j},\frac{x-x_n^j}{\epsilon_n^j}\right)$,
\begin{align}
\label{lab6}u_{0,n}(x)&=\sum_{j=1}^l V_n^j(0,x)+w_{0,n}^l(x),\\
\label{lab7}u_{1,n}(x)&=\sum_{j=1}^l \partial_t V_n^j(0,x)+w_{1,n}^l(x),
\end{align}
with
\begin{align}
\label{lab8}\limsup_{n\rightarrow\infty}\lVert S(t)(w_{0,n}^l,w_{1,n}^l)\rVert_{L_{t,x}^{\frac{2(d+1)}{d-2}}}\mathop{\longrightarrow}_{l\rightarrow\infty}0,
\end{align}
where $S(t)(u_0,u_1)$ denotes the solution of the wave equation with initial data $u(0)=u_0$ and $\partial_t u(0)=u_1$.
For every $l\geq 1$, we also have
\begin{align}
\nonumber\lefteqn{\lVert u_{0,n}\rVert_{\dot{H}^1}^2+\lVert u_{1,n}\rVert_{L^2}^2}&\\
\label{lab9}&\hspace{0.5in}=\sum_{j=1}^l \left(\lVert V^j_0\rVert_{\dot{H}^1}^2+\lVert V^j_1\rVert_{L^2}^2\right)+\lVert w_{0,n}^l\rVert_{\dot{H}^1}^2+\lVert w_{1,n}^l\rVert_{L^2}^2+o(1),\quad n\rightarrow\infty,
\end{align}
and, for $j\neq k$,
\begin{align}
\label{lab11}\lim_{n\rightarrow\infty} \lVert V_n^jV_n^k\rVert_{L_{t,x}^{\frac{d+1}{d-2}}}=0.
\end{align}
\end{theorem}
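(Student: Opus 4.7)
The plan is to follow the iterative bubble-extraction scheme of Bahouri and Gerard \cite{BahouriGerard}, with higher-dimensional adjustments in the spirit of Keraani \cite{Keraani} and Gerard \cite{Gerard}. The backbone of the argument is a refined Strichartz inequality at the diagonal exponent $p=\frac{2(d+1)}{d-2}$ of the form
\[ \|S(t)(u_0,u_1)\|_{L^p_{t,x}} \lesssim \|(u_0,u_1)\|_{\dot H^1\times L^2}^\theta \Bigl(\sup_{j\in\mathbb{Z}} 2^{-j(d-2)/2}\|P_j S(t)(u_0,u_1)\|_{L^\infty_{t,x}}\Bigr)^{1-\theta} \]
for some $\theta\in(0,1)$, where $P_j$ is the Littlewood-Paley projection at frequency $\sim 2^j$. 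The $d=3$ version is proved in \cite{BahouriGerard}; for general $d\geq 3$ it should follow by interpolating the standard Strichartz estimate with the dispersive $L^\infty$ bound for frequency-localized free waves, and exploiting almost-orthogonality of dyadic pieces as in \cite{Keraani}.

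The refined estimate is the engine of a bubble-extraction lemma: if $\limsup_n\|S(t)(u_{0,n},u_{1,n})\|_{L^p_{t,x}}\geq\delta>0$, then, passing to a subsequence, there exist a frequency scale $2^{j_n}$ and a point $(t_n,x_n)$ where the frequency-localized evolution has modulus bounded below in terms of $\delta$ and the uniform $\dot H^1\times L^2$-bound $A$. Setting $\epsilon_n=2^{-j_n}$, the weak limit in $\dot H^1\times L^2$ of the suitably rescaled and time-translated data produces a nontrivial first profile $(V_0^1,V_1^1)$. Iterating on the remainders $(w_{0,n}^l,w_{1,n}^l)$ via a diagonal extraction yields the subsequent profiles on a common subsequence. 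The pairwise orthogonality $\ref{lab5}$ of frames is forced at each stage by contradiction: non-orthogonality would make a later rescaled remainder weakly nonzero in an earlier frame, contradicting the weak convergence to zero already established there. The Pythagorean identity $\ref{lab9}$ then follows by expanding the $\dot H^1\times L^2$ inner products, using this orthogonality together with the fact that the remainder data converge weakly to zero in each extracted frame.

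The main obstacle is the vanishing $\ref{lab8}$ of the Strichartz norm of the remainder, since no monotone quantity directly controls it. The resolution is to couple the bubble-extraction lemma with the Pythagorean identity: each profile extracted at stage $l$ has $\dot H^1\times L^2$-energy bounded below by a positive constant depending only on $A$ and $\delta_l:=\limsup_n\|S(t)(w_{0,n}^l,w_{1,n}^l)\|_{L^p_{t,x}}$, with the power of $\delta_l$ dictated by the interpolation exponent $\theta$. The summability $\sum_j\|(V_0^j,V_1^j)\|_{\dot H^1\times L^2}^2\leq A$ provided by $\ref{lab9}$ then forces $\delta_l\to 0$. Finally, the bilinear decay $\ref{lab11}$ is obtained by density reduction to smooth, well-localized profiles followed by a change of variables into the $j$-th frame: the orthogonality $\ref{lab5}$ translates into $V_n^k$ becoming, after the change of variables, a free wave whose initial data either scale-concentrate, spatially concentrate, or escape to infinity in space-time, each possibility forcing the $L^{(d+1)/(d-2)}_{t,x}$ product to vanish in the limit.
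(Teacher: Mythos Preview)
Your proposal is correct in its broad strokes and would succeed, but it takes a structurally different route from the paper's proof.

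The paper proceeds in two separate stages. First, it applies G\'erard's scale decomposition (Proposition~\ref{prop1}) to split $(\nabla u_{0,n},u_{1,n})$ into finitely many $(\epsilon_n^j)$-oscillatory pieces $(\nabla p_{0,n}^j,p_{1,n}^j)$ plus a remainder $(\nabla r_n^l,s_n^l)$ that is small in the Besov-type norm $\lVert\cdot\rVert_B$ of (\ref{lab24}); the refined Sobolev inequality (Lemma~\ref{lab25}), interpolated against a Strichartz estimate, then shows this scale-remainder has small $L^{2(d+1)/(d-2)}_{t,x}$ norm. Second, within each fixed scale (Lemma~\ref{lab46}), profiles are extracted by maximizing a weak-limit energy functional $\eta$ over space-time translates only; the delicate point is showing that $\eta\to 0$ controls the Strichartz norm, which the paper does first with constants depending on a frequency annulus $[a,b]$ and then removes this dependence via an exact asymptotic identity for the Strichartz norm of the remainder. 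The two layers are synthesized by re-indexing pairs $(j,\alpha)$.

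Your approach collapses these two stages into one: the refined Strichartz inequality you state, with the Besov--$L^\infty$ quantity $\sup_j 2^{-j(d-2)/2}\lVert P_j S(t)(u_0,u_1)\rVert_{L^\infty_{t,x}}$ on the right, simultaneously detects the scale $\epsilon_n$ and the space-time center $(t_n,x_n)$ of the dominant bubble, in the spirit of the Schr\"odinger decompositions of \cite{Keraani,BegoutVargas}. This is more direct and avoids the two-layer bookkeeping and the $(a,b)$-removal argument. On the other hand, the refined Strichartz inequality itself requires real work---your interpolation sketch is plausible but not automatic, since one must control dyadic interactions---whereas the paper's ingredients (Proposition~\ref{prop1} and Lemma~\ref{lab25}) are already available in \cite{Gerard} in the needed generality. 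Your treatment of the orthogonality (\ref{lab5}), the Pythagorean expansion (\ref{lab9}), the vanishing (\ref{lab8}) via energy summability, and the bilinear decay (\ref{lab11}) matches the paper's in substance.
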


Applying the profile decomposition to an appropriate sequence, we obtain the existence of maximizers for ($\ref{lab2}$).
\begin{theorem}
\label{lab12}
Let $d\geq 3$, $(q,r)$ be a wave admissible pair with $q,r\in (2, \infty)$ and satisfying the $\dot{H}^1$-scaling condition.\footnote{We define the notion of the $\dot{H}^1$-scaling condition in Section $2$.}  Then there exists a maximizing pair $(\phi,\psi)\in \dot{H}^1\times L^2(\mathbb{R}^d)$ such that \begin{align*}\lVert S(t)(\phi,\psi)\rVert_{L_t^qL_x^r(\mathbb{R}\times\mathbb{R}^d)}=W_{q,r}\,\lVert (\phi, \psi)\rVert_{\dot{H}^1\times L^2(\mathbb{R}^d)}\end{align*} where \begin{align*} W_{q,r}&:=\sup \{\lVert S(t)(\phi,\psi)\rVert_{L_t^qL_x^r}:(\phi,\psi)\in \dot{H}^1\times L^2,\,\, \lVert (\phi, \psi)\rVert_{\dot{H}^1\times L^2(\mathbb{R}^d)}=1\}
\end{align*}
is the sharp constant.
\end{theorem}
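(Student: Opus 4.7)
My plan is to follow the standard scheme: apply the profile decomposition (Theorem \ref{lab4}) to a maximizing sequence and argue that only a single profile can carry non-trivial mass, which then will be the desired maximizer. Choose $(\phi_n,\psi_n)\in\dot{H}^1\times L^2$ with $\|(\phi_n,\psi_n)\|_{\dot{H}^1\times L^2}=1$ and $\|S(t)(\phi_n,\psi_n)\|_{L^q_tL^r_x}\to W_{q,r}$. After passing to a subsequence, Theorem \ref{lab4} supplies profiles $(V_0^j,V_1^j)$, parameters $(\epsilon_n^j,x_n^j,t_n^j)$, and a remainder $(w_{0,n}^l,w_{1,n}^l)$ satisfying \eqref{lab5}--\eqref{lab11}.

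The crux of the argument is to upgrade these properties to a Pythagorean identity for the Strichartz norm:
\begin{equation*}
\lim_{n\to\infty}\|S(t)(\phi_n,\psi_n)\|_{L^q_tL^r_x}^q=\sum_{j=1}^\infty \|S(t)(V_0^j,V_1^j)\|_{L^q_tL^r_x}^q.
\end{equation*}
I would proceed in two steps. First, the remainder $S(t)(w_{0,n}^l,w_{1,n}^l)$ vanishes in $L^q_tL^r_x$ as $l\to\infty$ (in the limsup in $n$): since $(q,r)$ lies on the $\dot{H}^1$-admissible line, I can interpolate between the critical space $L^{2(d+1)/(d-2)}_{t,x}$ (where smallness is given by \eqref{lab8}) and some different admissible Strichartz pair (whose norm is controlled uniformly by Strichartz together with the energy bound derived from \eqref{lab9}). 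Second, for fixed $l$ the interaction terms vanish:
\begin{equation*}
\Big\|\sum_{j=1}^l V_n^j\Big\|_{L^q_tL^r_x}^q-\sum_{j=1}^l \|V_n^j\|_{L^q_tL^r_x}^q\longrightarrow 0\quad\text{as }n\to\infty.
\end{equation*}
This is the main technical obstacle; I would derive it from the scale-translation orthogonality \eqref{lab5} via a Brezis--Lieb type argument, in which after a change of variables adapted to one profile the others escape to zero in measure, and mixed-norm cross-term contributions are controlled using H\"older's inequality together with a generalization of \eqref{lab11} to the relevant admissible pairs.

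Once the Pythagorean identity is in hand, set $E^j:=\|V_0^j\|_{\dot{H}^1}^2+\|V_1^j\|_{L^2}^2$ and $A^j:=\|S(t)(V_0^j,V_1^j)\|_{L^q_tL^r_x}$. By \eqref{lab9} we have $\sum_j E^j\leq 1$, and the Strichartz inequality applied to each profile gives $A^j\leq W_{q,r}(E^j)^{1/2}$. Hence
\begin{equation*}
W_{q,r}^q=\sum_j (A^j)^q\leq W_{q,r}^q\sum_j (E^j)^{q/2}\leq W_{q,r}^q\sum_j E^j\leq W_{q,r}^q,
\end{equation*}
where the middle inequality uses $E^j\in[0,1]$ together with $q/2>1$ (since $q\in(2,\infty)$). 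Every inequality must therefore be an equality, which forces each $E^j\in\{0,1\}$; exactly one profile $(V_0^{j_0},V_1^{j_0})$ satisfies $E^{j_0}=1$, all others vanish, and $A^{j_0}=W_{q,r}$. This profile is the desired maximizer $(\phi,\psi)$.
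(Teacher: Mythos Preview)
Your overall scheme matches the paper's: maximizing sequence, profile decomposition, remainder smallness via interpolation (this is exactly Corollary~\ref{lab85}), profile orthogonality in the Strichartz norm, then a convexity argument forcing a single profile to carry all the energy. The one genuine gap is your claimed Pythagorean \emph{equality}
\[
\lim_{n\to\infty}\|S(t)(\phi_n,\psi_n)\|_{L^q_tL^r_x}^q=\sum_{j\geq 1}\|V^j\|_{L^q_tL^r_x}^q.
\]
For mixed norms with $q\neq r$ this is generally false. If two profiles decouple only spatially (same scale and time shift, spatial shifts escaping), then at each fixed $t$ one has $\|V_n^1(t)+V_n^2(t)\|_{L^r_x}^r\to\|V^1(t)\|_{L^r_x}^r+\|V^2(t)\|_{L^r_x}^r$, and integrating the $q/r$-th power in $t$ does \emph{not} give $\|V^1\|_{L^q_tL^r_x}^q+\|V^2\|_{L^q_tL^r_x}^q$; rather one gets $\leq$ or $\geq$ depending on the sign of $q-r$. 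A Brezis--Lieb argument does not circumvent this, because the inner and outer exponents disagree.

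What actually holds (and what the paper proves as Lemma~\ref{lab86}) is the one-sided bound
\[
\limsup_{n\to\infty}\Big\|\sum_{j=1}^l V_n^j\Big\|_{L^q_tL^r_x}^B\leq\sum_{j=1}^l\|V^j\|_{L^q_tL^r_x}^B,\qquad B:=\min\{q,r\},
\]
established by first interpolating~\eqref{lab11} to $\|V_n^jV_n^k\|_{L^{q/2}_tL^{r/2}_x}\to 0$ and then controlling cross terms by H\"older. This inequality is all you need: together with the remainder smallness it yields $W_{q,r}^B\leq\sum_j(A^j)^B$, and since $B>2$ your concluding chain
\[
W_{q,r}^B\leq\sum_j(A^j)^B\leq W_{q,r}^B\sum_j(E^j)^{B/2}\leq W_{q,r}^B\sum_jE^j\leq W_{q,r}^B
\]
goes through verbatim with $B$ in place of $q$. (The paper reaches the same endpoint by factoring out $\max_j A^j$ instead, and handles the $l\to\infty$ passage via a short stabilization argument for the maximizing index, Appendix~B; your $\ell^B$-versus-$\ell^2$ version is in fact cleaner once you have the correct exponent.)
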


Along the same lines, we consider maximizers of the inequalities \begin{align}\label{lab13} \lVert u(t,x)\rVert_{L_t^qL_x^r}\leq W'_{q,r}\lVert (u_0, u_1)\rVert_{\dot{H}^s\times\dot{H}^{s-1}(\mathbb{R}^d)},\end{align} where $(q,r)$ is a wave admissible pair with $q,r\in (2,\infty)$ and satisfying the $\dot{H}^s$-scaling condition, $s\geq 1$.  These inequalities are obtained by combining the Sobolev inequality with the Strichartz inequalities ($\ref{lab2}$). As in the case of $\dot{H}^1\times L^2$ initial data, the main tool is the corresponding linear profile decomposition.  More precisely, we prove the following version (for an identical statement see Lemma $4.9$ in Kenig-Merle \cite{KenigMerleSupercritical}):

\begin{theorem}
\label{lab14}Let $s\geq 1$ be given and let $(u_{0,n},u_{1,n})_{n\in\mathbb{N}}$ be a bounded sequence in $\dot{H}^s\times \dot{H}^{s-1}(\mathbb{R}^d)$ with $d\geq 3$.  Then there exists a subsequence of $(u_{0,n},u_{1,n})$ (still denoted $(u_{0,n},u_{1,n})$), a sequence $(V_0^j,V_1^j)_{j\in \mathbb{N}}\subset \dot{H}^s\times\dot{H}^{s-1}(\mathbb{R}^d)$, and a sequence of triples $(\epsilon_n^j,x_n^j,t_n^j)\in \mathbb{R^+}\times\mathbb{R}^d\times\mathbb{R}$ such that for every $j\neq j'$,
\begin{align*}
\frac{\epsilon_n^j}{\epsilon_n^{j'}}+\frac{\epsilon_n^{j'}}{\epsilon_n^j}+\frac{|t_n^j-t_n^{j'}|}{\epsilon_n^j}+\frac{|x_n^j-x_n^{j'}|}{\epsilon_n^j}\mathop{\longrightarrow}_{n\rightarrow\infty}
\infty,
\end{align*}
and for every $l\geq 1$, if $V^j=S(t)(V_0^j,V_1^j)$ and $V_n^j(t,x)=\frac{1}{(\epsilon_n^j)^{\frac{d-2}{2}-(s-1)}}V^j\left(\frac{t-t_n^j}{\epsilon_n^j},\frac{x-x_n^j}{\epsilon_n^j}\right)$,
\begin{align}
u_{0,n}(x)&=\sum_{j=1}^l V_n^j(0,x)+w_{0,n}^l(x),\label{lab15}\\
u_{1,n}(x)&=\sum_{j=1}^l \partial_t V_n^j(0,x)+w_{1,n}^l(x)\label{lab16},
\end{align}
with
\begin{align}
\limsup_{n\rightarrow\infty}\lVert S(t)(w_{0,n}^l,w_{1,n}^l)\rVert_{L_t^qL_x^r}\mathop{\longrightarrow}_{l\rightarrow\infty}0\label{lab17}
\end{align}
for every $(q,r)$ a wave admissible pair with $q,r\in (2,\infty)$ and satisfying the $\dot{H}^s$-scaling condition. For every $l\geq 1$, we also have
\begin{align}
\nonumber \lefteqn{\lVert u_{0,n}\rVert_{\dot{H}^s}^2+\lVert u_{1,n}\rVert_{\dot{H}^{s-1}}^2}&\\
&\hspace{0.5in}=\sum_{j=1}^l \left(\lVert V^j_0\rVert_{\dot{H}^s}^2+\lVert V^j_1\rVert_{\dot{H}^{s-1}}^2\right)+\lVert w_{0,n}^l\rVert_{\dot{H}^s}^2+\lVert w_{1,n}^l\rVert_{\dot{H}^{s-1}}^2+o(1),\quad n\rightarrow\infty,\label{lab18}.
\end{align}
\end{theorem}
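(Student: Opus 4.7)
The plan is to reduce Theorem~\ref{lab14} to Theorem~\ref{lab4} via the fractional integration operator $|\nabla|^{-(s-1)}$, which is an isometry from $\dot{H}^1$ onto $\dot{H}^s$ and from $L^2$ onto $\dot{H}^{s-1}$. Setting $\tilde u_{0,n}:=|\nabla|^{s-1}u_{0,n}$ and $\tilde u_{1,n}:=|\nabla|^{s-1}u_{1,n}$, the boundedness of $(u_{0,n},u_{1,n})$ in $\dot{H}^s\times\dot{H}^{s-1}$ translates to boundedness of $(\tilde u_{0,n},\tilde u_{1,n})$ in $\dot{H}^1\times L^2$. Applying Theorem~\ref{lab4} to the lifted sequence produces, after extracting a subsequence, profiles $(\tilde V_0^j,\tilde V_1^j)$, triples $(\epsilon_n^j,x_n^j,t_n^j)$ satisfying \eqref{lab5}, and remainders $(\tilde w_{0,n}^l,\tilde w_{1,n}^l)$ obeying \eqref{lab6}--\eqref{lab9}.

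To transfer back to the $\dot{H}^s$-level I set $V_0^j:=|\nabla|^{-(s-1)}\tilde V_0^j$, $V_1^j:=|\nabla|^{-(s-1)}\tilde V_1^j$, and $w_{i,n}^l:=|\nabla|^{-(s-1)}\tilde w_{i,n}^l$ for $i=0,1$. Since $|\nabla|^{-(s-1)}$ commutes with the wave propagator $S(t)$ and with space-time translations, and satisfies the homogeneity relation $|\nabla|^{-(s-1)}\bigl[\lambda^{-(d-2)/2}f(\cdot/\lambda)\bigr] = \lambda^{-[(d-2)/2-(s-1)]}\bigl(|\nabla|^{-(s-1)}f\bigr)(\cdot/\lambda)$, the rescaled profiles $V_n^j$ built from $V^j=S(t)(V_0^j,V_1^j)$ with the prefactor $(\epsilon_n^j)^{-[(d-2)/2-(s-1)]}$ coincide with $|\nabla|^{-(s-1)}\tilde V_n^j$. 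Applying $|\nabla|^{-(s-1)}$ to \eqref{lab6}--\eqref{lab7} then yields \eqref{lab15}--\eqref{lab16}, while the isometry property converts \eqref{lab9} into \eqref{lab18}.

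It remains to establish the remainder estimate \eqref{lab17}. From \eqref{lab8} applied to $(\tilde w_{0,n}^l,\tilde w_{1,n}^l)$ together with the commutation $|\nabla|^{s-1}S(t)(w_{0,n}^l,w_{1,n}^l)=S(t)(\tilde w_{0,n}^l,\tilde w_{1,n}^l)$, I obtain $\limsup_{n}\lVert |\nabla|^{s-1}S(t)(w_{0,n}^l,w_{1,n}^l)\rVert_{L^{2(d+1)/(d-2)}_{t,x}}\to 0$ as $l\to\infty$. Sobolev embedding in the spatial variable upgrades this to smallness in $L^{q_0}_tL^{r_0}_x$ with $q_0=2(d+1)/(d-2)$ and $1/r_0 = (d-2)/[2(d+1)]-(s-1)/d$; a direct computation shows this pair is wave admissible and $\dot{H}^s$-critical. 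For any other target admissible pair $(q,r)$ with $q,r\in(2,\infty)$ and $\dot{H}^s$-scaling, the Strichartz inequality \eqref{lab13}, combined with the uniform $\dot{H}^s\times\dot{H}^{s-1}$ bound on the remainders coming from \eqref{lab18}, gives a uniform-in-$(n,l)$ bound on $\lVert S(t)(w_{0,n}^l,w_{1,n}^l)\rVert_{L^q_tL^r_x}$; interpolating between the small $(q_0,r_0)$-norm and this bounded norm then yields \eqref{lab17}.

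The principal obstacle I anticipate is this final interpolation: one must verify that every target admissible $\dot{H}^s$-scaling pair with $q,r\in(2,\infty)$ can be written, in $(1/q,1/r)$-coordinates, as a convex combination of $(1/q_0,1/r_0)$ and some other admissible scaling pair, so that H\"older's inequality delivers smallness. Since the set of admissible $\dot{H}^s$-scaling pairs forms a line segment in this plane, a short case analysis based on the location of $(q_0,r_0)$ along the segment should handle all targets.
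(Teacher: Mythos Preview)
Your reduction to Theorem~\ref{lab4} via $|\nabla|^{s-1}$ is exactly the paper's approach, and the transfer of the decomposition \eqref{lab15}--\eqref{lab16} and the Pythagorean identity \eqref{lab18} is correct.

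The gap is in your treatment of the remainder estimate \eqref{lab17}. Your Sobolev step produces the exponent $1/r_0 = (d-2)/[2(d+1)] - (s-1)/d$, but this need not be positive. For instance, with $d=3$ one has $q_0=8$ and $1/r_0 = 1/8 - (s-1)/3$, which is nonpositive for every $s\in[11/8,\,3/2)$; thus $(q_0,r_0)$ is not a legitimate Lebesgue pair at all, and your interpolation has no anchor on the $\dot H^s$-segment. The theorem is still non-vacuous in this range (e.g.\ for $d=3$, $s=7/5$ the admissible $\dot H^s$-pairs with $q,r\in(2,\infty)$ are exactly those with $q\in(10,\infty)$), so the difficulty is genuine. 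Your anticipated ``case analysis based on the location of $(q_0,r_0)$ along the segment'' cannot succeed here because $(q_0,r_0)$ simply lies off the segment.

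The paper's remedy is to invoke Corollary~\ref{lab85} rather than only \eqref{lab8}: the lifted remainder $S(t)(\tilde w_{0,n}^l,\tilde w_{1,n}^l)$ is already known to be small in $L^q_tL^{r'}_x$ for \emph{every} $\dot H^1$-admissible pair $(q,r')$ with $q,r'\in(2,\infty)$. Given a target $(q,r)$ at $\dot H^s$-scaling with $q,r\in(2,\infty)$, set $r'=rd/(d+(s-1)r)$, i.e.\ $1/r'=1/r+(s-1)/d$. Then $(q,r')$ is at $\dot H^1$-scaling, and one checks (using $q>2$ together with $d\ge 3$) that $r'\in(2,\infty)$ and that $(q,r')$ is wave admissible. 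A single application of Sobolev in $x$ then gives
\[
\lVert S(t)(w_{0,n}^l,w_{1,n}^l)\rVert_{L^q_tL^r_x}\le C\,\lVert S(t)(\tilde w_{0,n}^l,\tilde w_{1,n}^l)\rVert_{L^q_tL^{r'}_x},
\]
and \eqref{lab17} follows with no interpolation needed.
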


As a consequence of Theorem $\ref{lab14}$, we immediately obtain the existence of maximizers for the inequalities ($\ref{lab13}$).

\begin{theorem}
\label{lab20}
Let $d\geq 3$, $s\geq 1$ be given and let $(q,r)$ be a wave admissible pair with $q,r\in (2,\infty)$ and satisfying the $\dot{H}^s$-scaling condition.  Then there exists a maximizing pair $(\phi,\psi)\in \dot{H}^s\times \dot{H}^{s-1}(\mathbb{R}^d)$ such that \begin{align*}\lVert S(t)(\phi,\psi)\rVert_{L_t^qL_x^r}=W'_{q,r}\lVert (\phi, \psi)\rVert_{\dot{H}^s\times\dot{H}^{s-1}(\mathbb{R}^d)}\end{align*} where \begin{align*} W'_{q,r}&:=\sup \{\lVert S(t)(\phi,\psi)\rVert_{L_t^qL_x^r}:(\phi,\psi)\in \dot{H}^s\times \dot{H}^{s-1}\quad\textrm{with}\quad \lVert (\phi, \psi) \rVert_{\dot{H}^s\times\dot{H}^{s-1}(\mathbb{R}^d)}=1\}
\end{align*}
is the sharp constant.
\end{theorem}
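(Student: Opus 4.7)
The plan is to follow the Shao-type argument from \cite{ShaoSchrodinger}, applying the profile decomposition of Theorem \ref{lab14} to a maximizing sequence. Let $(\phi_n, \psi_n)$ satisfy $\lVert(\phi_n,\psi_n)\rVert_{\dot{H}^s \times \dot{H}^{s-1}} = 1$ and $\lVert S(t)(\phi_n,\psi_n)\rVert_{L^q_t L^r_x} \to W'_{q,r}$. After passing to a subsequence, Theorem \ref{lab14} supplies profiles $(V^j_0, V^j_1)$, parameters $(\epsilon^j_n, x^j_n, t^j_n)$ obeying the pairwise orthogonality condition, and remainders $(w^l_{0,n}, w^l_{1,n})$ whose $L^q_t L^r_x$ Strichartz norm vanishes as in \eqref{lab17}.

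The central step is the asymptotic $\ell^q$-decoupling of the profile contributions in the $L^q_t L^r_x$ norm:
\begin{align*}
\lim_{n\to\infty} \Big\lVert \sum_{j=1}^l V^j_n + S(t)(w^l_{0,n}, w^l_{1,n}) \Big\rVert_{L^q_t L^r_x}^q
= \lim_{n\to\infty}\Big( \sum_{j=1}^l \lVert V^j_n\rVert_{L^q_t L^r_x}^q + \lVert S(t)(w^l_{0,n},w^l_{1,n})\rVert_{L^q_t L^r_x}^q \Big).
\end{align*}
I expect this to be the main obstacle, since Theorem \ref{lab14} does not record an explicit analogue of the cross-term estimate \eqref{lab11} for an arbitrary admissible pair $(q,r)$ satisfying the $\dot{H}^s$-scaling condition, so the decoupling has to be established here. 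The argument I would use, following Keraani \cite{Keraani} and Bahouri--Gerard \cite{BahouriGerard}, is to approximate each $(V^j_0, V^j_1)$ by smooth data with compactly supported Fourier transform, then apply dispersive estimates together with the trichotomy built into \eqref{lab5} --- either the scales $\epsilon^j_n/\epsilon^k_n$ diverge, or the rescaled translations $(x^j_n-x^k_n)/\epsilon^j_n$ or $(t^j_n-t^k_n)/\epsilon^j_n$ diverge --- to conclude that $\lim_n \lVert V^j_n V^k_n\rVert_{L^{q/2}_t L^{r/2}_x} = 0$ for every $j \neq k$. A sum-expansion argument using $q/2 \ge 1$ then yields the decoupling above, the profile/remainder cross-terms being controlled by H\"older's inequality together with \eqref{lab17} and the uniform Strichartz bound on the profiles.

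Once the decoupling is established, set $\lambda_j := \lVert V^j_0\rVert_{\dot{H}^s}^2 + \lVert V^j_1\rVert_{\dot{H}^{s-1}}^2$. The Strichartz inequality \eqref{lab13}, combined with the scale invariance $\lVert V^j_n\rVert_{L^q_t L^r_x} = \lVert V^j\rVert_{L^q_t L^r_x}$ (an immediate consequence of the $\dot{H}^s$-scaling relation $1/q + d/r = d/2 - s$), gives $\lVert V^j\rVert_{L^q_t L^r_x} \leq W'_{q,r}\sqrt{\lambda_j}$, while \eqref{lab18} yields $\sum_j \lambda_j \leq 1$. Passing to the limit $n \to \infty$ and then $l \to \infty$ (using \eqref{lab17}) produces
\begin{align*}
(W'_{q,r})^q \leq (W'_{q,r})^q \sum_j \lambda_j^{q/2}.
\end{align*}
Since $q > 2$ and $\lambda_j \in [0,1]$, the elementary bound $\sum_j \lambda_j^{q/2} \leq (\max_j \lambda_j)^{(q-2)/2} \sum_j \lambda_j \leq (\max_j \lambda_j)^{(q-2)/2}$ forces $\max_j \lambda_j \geq 1$, hence $\max_j \lambda_j = 1$; all other profiles and the remainder must vanish in the limit. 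The surviving profile, say $(V^1_0, V^1_1)$, then has unit norm and satisfies $\lVert S(t)(V^1_0, V^1_1)\rVert_{L^q_t L^r_x} = W'_{q,r}$. Setting $(\phi,\psi) := (V^1_0, V^1_1)$ and using the invariance of both sides of the Strichartz inequality under the scalings, spatial translations, and time shifts built into $V^j_n$ completes the proof.
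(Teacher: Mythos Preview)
Your overall strategy matches the paper's: apply the $\dot H^s\times\dot H^{s-1}$ profile decomposition (Theorem~\ref{lab14}) to a maximizing sequence, establish asymptotic orthogonality of the profiles in the Strichartz norm, and use an elementary convexity argument to force all but one profile to vanish. The paper's own proof is a short remark that the analogue of \eqref{lab11} holds in the $\dot H^s$ setting, that the analogue of Lemma~\ref{lab86} then follows, and that the proof of Theorem~\ref{lab12} (together with the Appendix~B stabilization of the index~$j_0$) carries over verbatim.

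There is, however, a genuine gap in your decoupling step. You assert
\[
\lim_{n\to\infty}\Big\lVert\sum_{j=1}^l V_n^j + S(t)(w_{0,n}^l,w_{1,n}^l)\Big\rVert_{L^q_tL^r_x}^q
=\lim_{n\to\infty}\Big(\sum_{j=1}^l\lVert V_n^j\rVert_{L^q_tL^r_x}^q+\lVert S(t)(w_{0,n}^l,w_{1,n}^l)\rVert_{L^q_tL^r_x}^q\Big),
\]
but this equality, and even the $\le$ direction with exponent $q$, fails in general when $q>r$. If two profiles separate purely by spatial translation while sharing temporal support, then asymptotically the inner norm behaves like $(\lVert V^1(t)\rVert_{L^r_x}^r+\lVert V^2(t)\rVert_{L^r_x}^r)^{1/r}$, and $(a^r+b^r)^{q/r}\ge a^q+b^q$ whenever $q/r>1$. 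Thus the cross-term vanishing $\lVert V_n^jV_n^k\rVert_{L^{q/2}_tL^{r/2}_x}\to 0$ does not by itself yield an $\ell^q$ upper bound in a mixed norm with $q>r$; and wave-admissible $\dot H^s$-scaling pairs with $q>r$ do occur (take $r$ below the diagonal value $2(d+1)/(d-2s)$).

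The paper handles this by proving instead (Lemma~\ref{lab86} and its $\dot H^s$ analogue) the weaker but sufficient inequality
\[
\limsup_{n\to\infty}\Big\lVert\sum_{j=1}^l V_n^j\Big\rVert_{L^q_tL^r_x}^B\le\sum_{j=1}^l\lVert V^j\rVert_{L^q_tL^r_x}^B,\qquad B:=\min\{q,r\},
\]
which is exactly what the mixed-norm expansion delivers. Since $q,r>2$ gives $B>2$, your convexity step goes through unchanged with $q$ replaced by $B$: one obtains $1\le\sum_j\lambda_j^{B/2}\le(\max_j\lambda_j)^{(B-2)/2}$, forcing a single nontrivial profile. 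With that single correction (replace the exponent $q$ by $B=\min\{q,r\}$ in the decoupling and in the subsequent inequality), your argument is essentially the paper's.
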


\subsection*{Organization of the paper} The paper is organized as follows. In Section 2, we introduce the notation that we shall use throughout the paper and give some preliminaries.  Section 3 is devoted to the detailed proof of the linear profile decomposition Theorem \ref{lab4} and the existence of maximizers for the Strichartz estimates ($\ref{lab2}$).  In Section 4, we give a proof of Theorem $\ref{lab14}$ by using the $\dot{H}^1\times L^{2}$ initial data case, Theorem $\ref{lab4}$.  We then obtain the existence of maximizers for the the inequalities ($\ref{lab13}$). In Appendix A, we provide a proof of a refined Sobolev inequality, while in Appendix B, we fill out the details of Theorem $\ref{lab12}$.

\section{NOTATION AND PRELIMINARIES}

We will often use the notation $x_n=o(1)$, $n\rightarrow\infty$ to denote the limit $\displaystyle\lim_{n\rightarrow\infty} x_n=0$.

We use $L_t^qL_x^r(\mathbb{R}\times\mathbb{R}^d)$ to denote the Banach space of functions $u:\mathbb{R}\times\mathbb{R}^d\rightarrow\mathbb{C}$ whose norms are \[ \lVert u\rVert_{L_t^qL_x^r(\mathbb{R}\times\mathbb{R}^d)}:=\lVert \lVert u\rVert_{L_x^r}\rVert_{L_t^q}=\left(\int_{\mathbb{R}} \left(\int_{\mathbb{R^d}} |u(t)|^rdx\right)^\frac{q}{r}dt\right)^\frac{1}{q}<\infty, \] with the usual convention when $q$ or $r$ is infinity.  In the case $q=r$ we abbreviate $L_t^qL_x^r$ by $L_{t,x}^q$.  The operator $\nabla$ will refer to the derivative in the space variable only.

We define the Fourier transform on $\mathbb{R}^d$ by \begin{align*} \hat{f}(\xi)&:=\int_{\mathbb{R}^d} e^{-ix\cdot\xi}f(x)dx. \end{align*}

For $s\in \mathbb{R}$, we define the fractional differentiation operator $|\nabla|^s$ by \begin{align*} \widehat{|\nabla|^sf}(\xi)&:= |\xi|^s\hat{f}(\xi).\end{align*}

These define the homogeneous Sobolev norms, \begin{align*} \lVert f\rVert_{\dot{H}^s(\mathbb{R}^d)}&:=\lVert |\nabla|^sf\rVert_{L^2(\mathbb{R}^d)}.\end{align*}

For $s\geq 1$, we will also use the product space $\dot{H}^s\times \dot{H}^{s-1}$ equipped with the norm,
\begin{align*}
\lVert (f,g)\rVert_{\dot{H}^s\times\dot{H}^{s-1}(\mathbb{R}^d)}=\left(\lVert f\rVert_{\dot{H}^s(\mathbb{R}^d)}^2+\lVert g\rVert_{\dot{H}^{s-1}(\mathbb{R}^d)}^2\right)^\frac{1}{2}.
\end{align*}

The solution operator for the initial value problem ($\ref{lab1}$) will be denoted by $S(t)$, which can also be written as \begin{align*} S(t)(u_0,u_1)=\cos (t\sqrt{-\Delta})u_0+\frac{\sin(t\sqrt{-\Delta})}{\sqrt{-\Delta}}u_1.\end{align*}

For $(u_0,u_1)\in \dot{H}^1\times L^2(\mathbb{R}^d)$, we define the energy by \begin{align*} E((u_0,u_1))&:=\frac{1}{2}\int |\nabla u_0|^2dx+\frac{1}{2}\int |u_1|^2dx,\end{align*}  which is conserved for solutions of ($\ref{lab1}$): for all $t\in\mathbb{R}$, \begin{align*}
E((u(t),\partial_tu(t)))=E((u_0,u_1)).\end{align*}

\begin{definition} (Admissible pairs) For $d\geq 3$, we say that $(q,r)$ is a wave admissible pair if $q,r\geq 2$, $(q,r,d)\neq (2,\infty,3)$, and \begin{align*}\frac{1}{q}+\frac{d-1}{2r}\leq\frac{d-1}{4}.\end{align*}

For $s\geq 0$, we also say that a wave admissible pair $(q,r)$ satisfies the $\dot{H}^s$-scaling condition if \begin{align*}\frac{1}{q}+\frac{d}{r}&=\frac{d}{2}-s.\end{align*}
\end{definition}
We now give the precise statement of the Strichartz inequalities ($\ref{lab2}$).
\begin{lemma} (Strichartz Estimates) \cite{GinibreVelo},\cite{KeelTao}
For $d\geq 2$, if $(q,r)$ is a wave admissible pair with $r<\infty$ and satisfying the $\dot{H}^s$-scaling condition, then there exists $C>0$ such that for every $(u_0,u_1)\in \dot{H}^s\times \dot{H}^{s-1}(\mathbb{R}^d)$, \begin{align}\lVert S(t)(u_0,u_1)\rVert_{L_t^qL_x^r(\mathbb{R}\times\mathbb{R}^d)}&\leq C\lVert (u_0, u_1)\rVert_{\dot{H}^s\times\dot{H}^{s-1}(\mathbb{R}^d)}.\label{lab23}\end{align}\label{lab21}
\end{lemma}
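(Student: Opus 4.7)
This is the classical Strichartz inequality for the half-wave propagator, and I would follow the standard Ginibre--Velo/Keel--Tao derivation: reduce by $TT^*$ to a dispersive estimate for a frequency-localized half-wave operator, apply the Keel--Tao abstract machinery, and sum dyadic frequencies via Littlewood--Paley, inserting scaling to pass from $L^2$ data to $\dot{H}^s\times\dot{H}^{s-1}$ data.

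\emph{Step 1: reduce to a single half-wave propagator.} Writing
$$S(t)(u_0,u_1)=\tfrac12 e^{it\sqrt{-\Delta}}\bigl(u_0-i(\sqrt{-\Delta})^{-1}u_1\bigr)+\tfrac12 e^{-it\sqrt{-\Delta}}\bigl(u_0+i(\sqrt{-\Delta})^{-1}u_1\bigr),$$
one has $\|(u_0,u_1)\|_{\dot H^s\times\dot H^{s-1}}^2\sim \sum_{\pm}\|u_0\mp i(\sqrt{-\Delta})^{-1}u_1\|_{\dot H^s}^2$, so it is enough to prove $\|e^{\pm it\sqrt{-\Delta}}f\|_{L^q_tL^r_x}\lesssim\|f\|_{\dot H^s}$ for admissible $(q,r)$ satisfying the $\dot H^s$-scaling.

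\emph{Step 2: Littlewood--Paley localization and the two basic inputs.} After applying a projector $P_N$ to frequencies $|\xi|\sim N$ and rescaling to $N=1$, the two ingredients needed for the Keel--Tao abstract theorem are the energy identity $\|e^{it\sqrt{-\Delta}}P_1f\|_{L^2_x}=\|P_1f\|_{L^2_x}$ and the frequency-localized dispersive estimate
$$\|e^{it\sqrt{-\Delta}}P_1f\|_{L^\infty_x}\lesssim (1+|t|)^{-(d-1)/2}\|P_1f\|_{L^1_x}.$$
The second estimate follows from stationary phase applied to the oscillatory integral kernel: the phase $t|\xi|+x\cdot\xi$ has $d-1$ non-degenerate critical directions on the unit sphere in $\xi$, producing the familiar wave-equation decay rate.

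\emph{Step 3: Keel--Tao and dyadic summation.} The abstract Keel--Tao machinery converts these two inputs into the frequency-$1$ Strichartz bound $\|e^{it\sqrt{-\Delta}}P_1f\|_{L^q_tL^r_x}\lesssim\|P_1f\|_{L^2_x}$ for every wave admissible $(q,r)$ at the $s=1/2$ scaling, excluding $(q,r,d)=(2,\infty,3)$. Rescaling gives, at frequency $N$, $\|e^{it\sqrt{-\Delta}}P_Nf\|_{L^q_tL^r_x}\lesssim N^s\|P_Nf\|_{L^2_x}=\|P_Nf\|_{\dot H^s}$, where the exponent $s$ arises precisely because $(q,r)$ satisfies the $\dot H^s$-scaling relation. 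Summing in $N$ using Minkowski's inequality (legitimate because $q,r\geq 2$) together with the Littlewood--Paley square-function characterization of $\dot H^s$ closes the argument.

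The main obstacle lies in Step 3: the endpoint $L^2_tL^r_x$ estimate at the corner of the admissibility region demands the delicate bilinear real-interpolation/atomic decomposition argument of Keel--Tao, and it is precisely this step that forces the exclusion $(q,r,d)\neq(2,\infty,3)$. The dispersive estimate of Step 2 and the non-endpoint portion of Keel--Tao are by contrast routine consequences of stationary phase and Hardy--Littlewood--Sobolev applied to the $TT^*$ kernel.
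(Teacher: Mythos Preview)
The paper does not supply its own proof of this lemma: it is stated with citations to Ginibre--Velo and Keel--Tao and then used as a black box. Your outline is a correct sketch of exactly the argument in those references (half-wave reduction, frequency-localized dispersive decay $(1+|t|)^{-(d-1)/2}$, the abstract Keel--Tao theorem, and Littlewood--Paley summation using $q,r\geq 2$ and $r<\infty$), so there is nothing further to compare.
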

When $s=1$, we will often make use of these estimates in the following form: \begin{align}\label{lab22}\lVert S(t)(u_0,u_1)\rVert_{L_t^qL_x^r}\leq \tilde{C}\left[E((u_0,u_1))\right]^\frac{1}{2}.\end{align}
\section{MAXIMIZERS FOR THE  $\dot{H}^1\times{L}^{2}$-STRICHARTZ INEQUALITIES}

In the first part of this section, we extend the linear profile decomposition for the wave equation, previously obtained by Bahouri-Gerard \cite{BahouriGerard} for $d=3$, to dimensions $d\geq3$. We first prove the profile decomposition with the diagonal pair $q=r=\frac{2(d+1)}{d-2}$, based on the arguments given in \cite{BahouriGerard}, \cite{Keraani} and \cite{Gerard}.  We then obtain the decomposition for any suitable wave admissible pair $(q,r)$ by using an interpolation argument. In the second part, we give a proof of the existence of maximizers for the inequalities ($\ref{lab2}$), through the use of the linear profile decomposition stated above in Theorem $\ref{lab4}$, in the spirit of \cite{ShaoSchrodinger}.
\subsection{Linear Profile Decomposition}\ \\
\ \\
We begin by recalling some preliminaries that will be used throughout this subsection. For further reference, see for instance \cite{BahouriGerard} and \cite{Keraani}.

If $\sigma$ is a function on $\mathbb{R}^d$, we define $\sigma(D)$ by \begin{align*} \widehat{(\sigma(D)f)}(\xi)&=\sigma(\xi)\widehat{f}(\xi).\end{align*}

We use the space-time Fourier transform, \begin{align*}\widetilde{f}(\sigma,\eta)&=\int_\mathbb{R}\int_{\mathbb{R}^d} e^{-it\sigma-ix\cdot \eta}f(t,x)dxdt.\end{align*}

We also define the following norm on $L^2$:
\begin{align}\label{lab24}
I_k(f):=\left(\int_{2^k\leq |\xi|\leq 2^{k+1}} |\hat{f}(\xi)|^2d\xi\right)^{1/2},\quad \lVert f\rVert_B :=  \sup_{k\in\mathbb{Z}} I_k(f).
\end{align}

We now state a variant of the Sobolev inequality.  The proof is given in detail in Appendix A.

\begin{lemma}(A Refined Sobolev Inequality) \cite{Gerard}\label{lab25} For $d\geq 3$, there exists a constant $C\geq 0$ such that for every $u\in \dot{H}^1(\mathbb{R}^d)$, we have,
\begin{align}\lVert u \rVert_{L^p}\leq C \lVert \nabla u\rVert_{L^2}^{\frac {2}{p}} \lVert\nabla u\rVert_{ {B}}^{1-\frac {2}{p}}\label{lab26}\end{align}
where $\frac{1}{p}=\frac{1}{2}-\frac{1}{d}$ and $\lVert f\rVert_{B}$ is defined by $(\ref{lab24})$.
\end{lemma}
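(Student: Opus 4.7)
The plan is to adapt the Littlewood--Paley argument of G\'erard. Let $(\Delta_k)_{k\in\mathbb{Z}}$ denote a smooth Littlewood--Paley decomposition so that $\widehat{\Delta_k f}$ is supported in an annulus $\{|\xi|\sim 2^k\}$, $\sum_k \Delta_k = \mathrm{Id}$, and each piece is controlled by $\|\nabla u\|_B$: namely $\|\Delta_k \nabla u\|_{L^2}\leq C\|\nabla u\|_B$, and hence $\|\Delta_k u\|_{L^2}\leq C\,2^{-k}\|\nabla u\|_B$. The strategy is to produce a pointwise bound on $|u(x)|$ by splitting into low and high frequencies, optimizing the cutoff, and then integrating once.

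First I would estimate the low frequencies using Bernstein's inequality: $\|\Delta_k u\|_{L^\infty}\leq C\,2^{kd/2}\|\Delta_k u\|_{L^2}\leq C\,2^{k(d-2)/2}\|\nabla u\|_B$. Summing the resulting geometric series (which converges because $d\geq 3$) over $k<N$ gives, for any $N\in\mathbb{Z}$,
\begin{align*}
\sum_{k<N}|\Delta_k u(x)|\leq C\,2^{N(d-2)/2}\,\|\nabla u\|_B.
\end{align*}
For the high frequencies I would apply the Cauchy--Schwarz inequality in the summation variable,
\begin{align*}
\sum_{k\geq N}|\Delta_k u(x)|\leq\Bigl(\sum_{k\geq N}4^{-k}\Bigr)^{1/2}\Bigl(\sum_{k\geq N}4^k|\Delta_k u(x)|^2\Bigr)^{1/2}\leq C\,2^{-N}\,G(x),
\end{align*}
where $G(x):=\bigl(\sum_{k}4^k|\Delta_k u(x)|^2\bigr)^{1/2}$ is essentially the Littlewood--Paley square function of $\nabla u$.

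Adding the two estimates and optimizing the free integer $N$ pointwise --- the balance $2^{Nd/2}\sim G(x)/\|\nabla u\|_B$ is well-defined for a.e.\ $x$ since Plancherel will guarantee $G\in L^2$ --- produces
\begin{align*}
|u(x)|\leq C\,\|\nabla u\|_B^{2/d}\,G(x)^{(d-2)/d}.
\end{align*}
The numerology $2p/d=p-2$ and $p(d-2)/d=2$ (both immediate from $1/p=1/2-1/d$) then converts this into the clean pointwise inequality $|u(x)|^p\leq C\,\|\nabla u\|_B^{p-2}\,G(x)^2$.

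It remains to integrate in $x$ and identify $\|G\|_{L^2}^2=\sum_k 4^k\|\Delta_k u\|_{L^2}^2\leq C\sum_k\|\Delta_k\nabla u\|_{L^2}^2\leq C\|\nabla u\|_{L^2}^2$ by Plancherel together with the almost-orthogonality of the Littlewood--Paley pieces. This yields $\|u\|_{L^p}^p\leq C\|\nabla u\|_B^{p-2}\|\nabla u\|_{L^2}^2$, and taking the $p$-th root produces $(\ref{lab26})$. The only bookkeeping I expect to require care is the passage between the sharp Fourier cutoffs in the definition of $I_k$ and the smooth projections $\Delta_k$ used for Bernstein, which is resolved by controlling each $\|\Delta_k\nabla u\|_{L^2}$ by a finite sum of neighboring $I_{k'}(\nabla u)$'s, hence by $\|\nabla u\|_B$.
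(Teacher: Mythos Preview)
Your argument is correct but follows a genuinely different route from the paper's proof in Appendix~A. The paper works with \emph{sharp} frequency cutoffs $u_{<A}$, $u_{>A}$ and the layer-cake representation $\lVert u\rVert_{L^p}^p=p\int_0^\infty \lambda^{p-1}\,m\{|u|>\lambda\}\,d\lambda$: one first proves $\lVert u_{<A}\rVert_{L^\infty}\leq K A^{(d-2)/2}\lVert\nabla u\rVert_B$ by summing $\lVert\chi_{2^k\le|\xi|\le 2^{k+1}}\hat u\rVert_{L^1}$ over $k\le k(A)$, then chooses $A=A(\lambda)$ so that this bound equals $\lambda/2$, uses Chebyshev on $u_{>A(\lambda)}$ to control $m\{|u|>\lambda\}$, and integrates in $\lambda$ via Fubini. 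You instead produce a \emph{pointwise} estimate $|u(x)|\le C\lVert\nabla u\rVert_B^{2/d}G(x)^{(d-2)/d}$ by splitting the smooth Littlewood--Paley series at a level $N$ optimized at each $x$, and then integrate once using $\lVert G\rVert_{L^2}\lesssim\lVert\nabla u\rVert_{L^2}$. The paper's approach has the advantage of working directly with the sharp cutoffs that define $\lVert\cdot\rVert_B$, so no smooth-versus-sharp bookkeeping is needed; your approach yields the stronger pointwise information and is arguably more modular, at the cost of the final remark reconciling $\Delta_k$ with the sharp $I_k$. Both are standard and either would serve here.
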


\begin{definition}
Let $(f_n)_{n\in\mathbb{N}}$ be a bounded sequence of functions in $L^2(\mathbb{R}^d)$, $d\geq 3$.
Given a sequence $(\epsilon_n)_{n\in\mathbb{N}}\subset\mathbb{R}^+$, we say that $(f_n)$ is $(\epsilon_n)$-oscillatory if \begin{align*} \limsup_{n\rightarrow\infty} \left(\int_{\epsilon_n|\xi|\leq \frac{1}{R}} |\hat{f_n}(\xi)|^2d\xi+\int_{\epsilon_n |\xi|\geq R} |\hat{f_n}(\xi)|^2d\xi\right)\mathop{\longrightarrow}_{R\rightarrow\infty} 0,\end{align*}
and $(f_n)$ is $(\epsilon_n)$-singular if, for all $b>a>0$, \begin{align} \int_{a\leq \epsilon_n |\xi|\leq b} |\hat{f_n}(\xi)|^2d\xi\mathop{\longrightarrow}_{n\rightarrow\infty} 0.\end{align}
\end{definition}

\begin{remark}If $(f_n)$ is $(\epsilon_n)$-oscillatory and $(g_n)$ is $(\epsilon_n)$-singular then by Plancherel's formula and Cauchy-Schwartz we have, \begin{align}
\label{lab27}\int_{\mathbb{R}^d} f_n(x)\overline{g_n(x)}dx\mathop{\longrightarrow}_{n\rightarrow\infty} 0.
\end{align}
This gives the identity,
\begin{align} \label{lab28}\lVert f_n+g_n\rVert_{L^2}^2=\lVert f_n\rVert_{L^2}^2+\lVert g_n\rVert_{L^2}^2+o(1),\quad n\rightarrow\infty.\end{align}
\end{remark}

The next proposition provides a decomposition of bounded sequences in $L^2(\mathbb{R}^d)$.  For a detailed proof we refer the reader to Theorem $2.9$ in \cite{Gerard}.  We note that the proof given there uses a slightly different but equivalent norm on the space $B$.
\begin{proposition} \label{prop1} \cite{Gerard} Let $(f_n)_{n\in\mathbb{N}}$ be a bounded sequence in $L^2(\mathbb{R}^d)$ with $d\geq 3$.  Then there exists a subsequence of $(f_n)$ (still  denoted $(f_n)$), a sequence $(\epsilon_n^{j})\subset \mathbb{R}^+$ such that, for every $j\neq j'$, \begin{align} \frac{\epsilon_n^j}{\epsilon_n^{j'}}+\frac{\epsilon_n^{j'}}{\epsilon_n^j}\mathop{\longrightarrow}_{n\rightarrow\infty}\infty,\label{lab29}\end{align} and a bounded sequence $(g_n^j)\subset L^2(\mathbb{R}^d)$, such that for every $l\geq 1, x\in \mathbb{R}^d$, \begin{align*} f_n(x)&=\sum_{j=1}^l g_n^{j}(x)+r_n^{l}(x),\end{align*} where $(g_n^j)$ is $(\epsilon_n^j)$-oscillatory, $(r_n^{l})$ is $(\epsilon_n^{j})$-singular, $1\leq j\leq l$, and \begin{align*}\limsup_{n\rightarrow\infty} \lVert r_n^{l}\rVert_B\mathop{\longrightarrow}_{l\rightarrow\infty}0.\end{align*}
\end{proposition}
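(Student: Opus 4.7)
The plan is an iterative (Lions-style) extraction of concentrating dyadic scales, combined with a diagonal subsequence argument, in the spirit of Bahouri--G\'erard's proof of their profile decomposition and G\'erard's original Theorem 2.9 that is cited here.

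Concretely, set $r_n^0 = f_n$ and, at each stage $l\ge 1$, let $\alpha_l = \limsup_n \lVert r_n^{l-1}\rVert_B$. If $\alpha_l = 0$, stop. Otherwise, pass to a subsequence so that $\lVert r_n^{l-1}\rVert_B \to \alpha_l$, choose integers $k_n^l$ with $I_{k_n^l}(r_n^{l-1}) \ge \alpha_l/2$, set $\epsilon_n^l = 2^{-k_n^l}$, and define $g_n^l := \chi(\epsilon_n^l D)\,r_n^{l-1}$ for a fixed radial bump $\chi \in C_c^\infty(\mathbb{R}^d\setminus\{0\})$ that equals $1$ on $\{1\le |\xi|\le 2\}$ and is supported in a slightly larger annulus. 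This makes $(g_n^l)$ automatically $(\epsilon_n^l)$-oscillatory and gives $\lVert g_n^l\rVert_{L^2} \ge \alpha_l/2$. Put $r_n^l = r_n^{l-1} - g_n^l$ and iterate.

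A further passage to subsequence at each stage is used to ensure $|k_n^l - k_n^j| \to \infty$ as $n\to\infty$ for every $j<l$, which is precisely the scale-orthogonality condition (\ref{lab29}); the point is that if no such subsequence existed, then (up to a bounded shift) the scale $\epsilon_n^l$ would coincide with some already-extracted $\epsilon_n^j$, contradicting $I_{k_n^l}(r_n^{l-1})\ge \alpha_l/2 > 0$. Scale separation forces the Fourier supports of the $g_n^j$'s to be pairwise disjoint for large $n$, which yields the $L^2$ almost-orthogonality
\begin{equation*}
\sum_{j=1}^l \lVert g_n^j\rVert_{L^2}^2 + \lVert r_n^l\rVert_{L^2}^2 = \lVert f_n\rVert_{L^2}^2 + o(1), \qquad n\to\infty.
\end{equation*}
Combined with $\lVert g_n^j\rVert_{L^2} \ge \alpha_j/2$ and $\sup_n \lVert f_n\rVert_{L^2} < \infty$, this gives $\sum_j \alpha_j^2 < \infty$, hence $\alpha_l \to 0$, which is the required decay of $\limsup_n \lVert r_n^l\rVert_B$. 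The $(\epsilon_n^j)$-singularity of $r_n^l$ for $j\le l$ follows by fixing $0<a<b$ and noting that, for $n$ large, the annulus $\{a\le \epsilon_n^j|\xi|\le b\}$ lies in a bounded dyadic window around $k_n^j$; scale separation guarantees no other extracted $k_n^{j'}$ contaminates this window, and the piece removed at stage $j$ absorbs the mass therein. A standard diagonal argument consolidates all these extractions into a single subsequence on which every assertion holds.

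The main technical obstacle is reconciling the two requirements on $g_n^j$: it must be $(\epsilon_n^j)$-oscillatory (so its Fourier support must lie in a \emph{fixed} annulus at scale $1/\epsilon_n^j$) while leaving a remainder that is $(\epsilon_n^j)$-singular on \emph{every} fixed annulus $[a,b]/\epsilon_n^j$. The resolution is a two-limit procedure: for each target window $[a,b]$ one refines the choice of cutoff $\chi$ (enlarging its support while keeping it independent of $n$) so that in the inner limit $n\to\infty$ the mass in that window is entirely captured by $g_n^j$; afterwards $a\to 0$ and $b\to\infty$ is taken on the oscillation side. Carrying out this interleaving of inner limits with the diagonal subsequence extraction, while preserving the orthogonality (\ref{lab29}), is the delicate bookkeeping step of the proof.
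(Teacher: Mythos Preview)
The paper does not give its own proof of this proposition; it simply refers the reader to Theorem~2.9 in \cite{Gerard}. Your overall plan---iteratively extract a dominant dyadic scale, use Fourier-side disjointness to get $L^2$ almost-orthogonality, and conclude $\sum_j\alpha_j^2<\infty$ hence $\alpha_l\to 0$---is indeed G\'erard's scheme, and that part of your sketch is fine.

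The gap is in the cutoff. With a \emph{fixed} bump $\chi$ equal to $1$ on $\{1\le|\xi|\le 2\}$, both your orthogonality claim and your singularity claim fail. For orthogonality: removing $g_n^j=\chi(\epsilon_n^j D)r_n^{j-1}$ kills only the single dyadic shell at $k_n^j$, so the next near-maximizer may be $k_n^{j+1}=k_n^j\pm 1$; take $\widehat f_n$ with equal mass on two adjacent shells and you get $\epsilon_n^{j+1}/\epsilon_n^j=2$, constant in $n$. Your asserted contradiction (``$I_{k_n^l}(r_n^{l-1})\ge\alpha_l/2$ would fail'') does not fire, because nothing forces the neighboring shell to lose mass. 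For singularity: $\widehat{r_n^l}$ is only forced to vanish where some $\chi(\epsilon_n^j\cdot)=1$, i.e.\ on one shell per scale, so for $[a,b]\not\subset[1,2]$ the mass of $r_n^l$ in $\{a\le\epsilon_n^j|\xi|\le b\}$ need not go to $0$. Your ``two-limit'' fix cannot repair this: $g_n^j$ must be a single sequence of functions, not a family depending on the test window $[a,b]$, and you explicitly keep $\chi$ independent of $n$. The correct device---and this is what G\'erard actually does---is to let the cutoff widen with $n$: after choosing $\epsilon_n^j$, pass to a further subsequence along which $\int_{1/R\le\epsilon_n^j|\xi|\le R}|\widehat{r_n^{j-1}}|^2\,d\xi$ converges for each $R$ (say to $a_R\uparrow a_\infty$), pick $R_n\to\infty$ slowly enough that the mass on $\{1/R_n\le\epsilon_n^j|\xi|\le R_n\}$ tends to $a_\infty$, and set $\widehat{g_n^j}=\theta_{R_n}(\epsilon_n^j\,\cdot)\,\widehat{r_n^{j-1}}$ for a smooth cutoff $\theta_R$ onto $[1/R,R]$. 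Then the mass of $g_n^j$ outside $\{1/R\le\epsilon_n^j|\xi|\le R\}$ tends to $a_\infty-a_R\to 0$ as $R\to\infty$, so $g_n^j$ is $(\epsilon_n^j)$-oscillatory; and for any fixed $[a,b]$ one eventually has $R_n>\max(b,1/a)$, so $r_n^j$ (and hence every later $r_n^l$, since $|\widehat{r_n^l}|\le|\widehat{r_n^j}|$ pointwise) is $(\epsilon_n^j)$-singular. With singularity in hand your orthogonality argument becomes valid: if $\epsilon_n^l/\epsilon_n^j$ stayed bounded along a subsequence, singularity of $r_n^{l-1}$ at scale $\epsilon_n^j$ would force $I_{k_n^l}(r_n^{l-1})\to 0$.
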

\begin{remark}  This proposition is similar to Proposition $3.4$ in \cite{BahouriGerard}.  Let us briefly point out the differences between the two statements.  The decomposition in \cite{BahouriGerard} is stated in the form
\begin{align*}
f_n=f+\sum_{j=1}^l g_n^j+r_n^l,
\end{align*}
while at the same time the result requires that the constructed sequence $(\epsilon_n^j)\subset\mathbb{R}^+$ satisfies $\displaystyle \epsilon_n^j\mathop{\longrightarrow}_{n\rightarrow\infty}0$ for each $j\geq 1$.  We emphasize that the statement we use does not require this condition on $(\epsilon_n^j)$.  This is the distinction which allows one to obtain the different form of the decomposition.  
\end{remark}

We now state an inequality from \cite{Gerard}.
\begin{lemma}\label{lab30}
For all $p\in [2,\infty)$ we have,
\begin{align*}
\left| \left|\sum_{j=1}^l a_j\right|^p-\sum_{j=1}^l |a_j|^p\right|&\leq C_l\sum_{j\neq k} |a_j||a_k|^{p-1}.\end{align*}
\end{lemma}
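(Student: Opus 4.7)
The plan is to prove the inequality by induction on $l$, with the main ingredient being a standard pointwise bound for $||x+y|^p-|x|^p-|y|^p|$. Once that two-term inequality is in hand, the general case is just bookkeeping.

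First, I would establish the elementary inequality: for every $x,y\in\mathbb{C}$ and $p\ge 2$,
\begin{equation*}
\bigl||x+y|^p-|x|^p-|y|^p\bigr|\leq C_p\bigl(|x|^{p-1}|y|+|x|\,|y|^{p-1}\bigr).
\end{equation*}
This follows from the fundamental theorem of calculus applied to $t\mapsto |x+ty|^p$, which gives $\bigl||x+y|^p-|x|^p\bigr|\leq p\max(|x+y|,|x|)^{p-1}|y|\leq p(|x|+|y|)^{p-1}|y|$. Splitting into the cases $|y|\le|x|$ and $|x|\le|y|$ and using $|y|^p\le|x|^{p-1}|y|$ in the first case (respectively the symmetric bound in the second) yields the displayed inequality.

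Next, the induction. The base case $l=1$ is trivial. For the inductive step, set $b=\sum_{j=1}^{l-1}a_j$, so that
\begin{equation*}
\Bigl|\sum_{j=1}^l a_j\Bigr|^p-\sum_{j=1}^l |a_j|^p=\bigl(|b+a_l|^p-|b|^p-|a_l|^p\bigr)+\Bigl(|b|^p-\sum_{j=1}^{l-1}|a_j|^p\Bigr).
\end{equation*}
For the first summand I apply the two-term inequality with $x=b$, $y=a_l$, and then use the triangle inequality $|b|\le\sum_{j<l}|a_j|$ together with the power-mean bound $|b|^{p-1}\le(l-1)^{p-2}\sum_{j<l}|a_j|^{p-1}$ (valid since $p\ge 2$, by convexity of $t\mapsto t^{p-1}$). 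This gives a bound by $C_l\sum_{j<l}(|a_j||a_l|^{p-1}+|a_j|^{p-1}|a_l|)$, which is part of the desired double sum. The second summand is controlled by the induction hypothesis applied to $a_1,\dots,a_{l-1}$. Adding the two estimates produces a bound of the form $C_l\sum_{j\ne k}|a_j||a_k|^{p-1}$, as required.

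The only real content is the pointwise inequality in the first paragraph; after that, the argument is just absorbing the $|a_l|^p$ type leftover into the mixed terms via the triangle and power-mean inequalities, with the constant $C_l$ depending harmlessly on $l$ and $p$. I expect no genuine obstacle, only the need to keep track of constants when converting $|b|^{p-1}$ into a sum of $|a_j|^{p-1}$'s.
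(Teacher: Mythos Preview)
Your proof is correct. The two-term inequality via the fundamental theorem of calculus is standard and sound, and the induction step is carried out cleanly: the decomposition into $(|b+a_l|^p-|b|^p-|a_l|^p)+(|b|^p-\sum_{j<l}|a_j|^p)$ together with the convexity bound $|b|^{p-1}\le (l-1)^{p-2}\sum_{j<l}|a_j|^{p-1}$ does exactly what is needed.

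As for comparison with the paper: the paper does \emph{not} prove this lemma. It is simply stated with a reference to G{\'e}rard \cite{Gerard} and used as a black box in the proof of Lemma~\ref{lab31}. So there is no ``paper's own proof'' to compare against; your argument supplies a self-contained proof where the paper has none.
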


In the proof of the profile decomposition we will often use the following fact, which we prove in a similar spirit to Lemma $2.7$ in \cite{Keraani}.
\begin{lemma} \label{lab31}Let $(\epsilon_n^{j},x_n^{j},t_n^j)\subset\mathbb{R}^+\times\mathbb{R}^d\times\mathbb{R}$, $d\geq 3$, be a sequence of triples satisfying ($\ref{lab5}$) and $(V^{j})$ be a sequence of functions in $L^{\frac{2(d+1)}{d-2}}_{t,x}(\mathbb{R}^{d+1})$.  Define \begin{align*}
V_n^{j}(t,x):=\frac{1}{(\epsilon_n^{j})^\frac{d-2}{2}}V^{j}\left(\frac{t-t_n^{j}}{\epsilon_n^{j}},\frac{x-x_n^{j}}{\epsilon_n^{j}}\right).
\end{align*}

Then for every $l\geq 1$ we have
\begin{align*}\left\lVert \sum_{j=1}^l V_n^{j}\right\rVert_{L_{t,x}^\frac{2(d+1)}{d-2}}^\frac{2(d+1)}{d-2}\mathop{\longrightarrow}_{n\rightarrow\infty} \sum_{j=1}^l \left\lVert V^{j}\right\rVert_{L_{t,x}^\frac{2(d+1)}{d-2}}^\frac{2(d+1)}{d-2}.\end{align*}
\end{lemma}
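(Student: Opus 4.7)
The strategy is to apply Lemma~\ref{lab30} with $p=\frac{2(d+1)}{d-2}$ to reduce the claim to showing that every cross term $\iint |V_n^j||V_n^k|^{p-1}\,dx\,dt$ with $j\neq k$ tends to $0$ as $n\to\infty$, and to extract this decay from the orthogonality condition (\ref{lab5}). The diagonal contributions $\lVert V_n^j\rVert_{L^p}^p$ already equal $\lVert V^j\rVert_{L^p}^p$ for every $n$, since the exponent $p=\frac{2(d+1)}{d-2}$ is precisely the one for which the rescaling defining $V_n^j$ preserves the space-time $L^p$ norm (the identity $d+1-p\cdot\frac{d-2}{2}=0$).

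Next I would argue by density. Since $\lVert V_n^j\rVert_{L^p}=\lVert V^j\rVert_{L^p}$ and similarly for $V_n^k$, for any $\delta>0$ I can pick $\tilde V^j,\tilde V^k\in C_c^\infty(\mathbb{R}^{d+1})$ with $\lVert V^m-\tilde V^m\rVert_{L^p}<\delta$ for $m\in\{j,k\}$. Replacing $V^j,V^k$ by $\tilde V^j,\tilde V^k$ in the cross integral produces two errors that I would control by H\"older's inequality combined with the pointwise estimate $\lvert \lvert a\rvert^{p-1}-\lvert b\rvert^{p-1}\rvert\leq C(\lvert a\rvert^{p-2}+\lvert b\rvert^{p-2})\lvert a-b\rvert$ (valid because $p>2$ for $d\geq 3$). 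Using H\"older with exponents $\frac{p}{p-2}$ and $p$, this yields $\lVert \lvert V^k\rvert^{p-1}-\lvert\tilde V^k\rvert^{p-1}\rVert_{L^{p/(p-1)}}\leq C(\lVert V^k\rVert_{L^p}+\lVert\tilde V^k\rVert_{L^p})^{p-2}\lVert V^k-\tilde V^k\rVert_{L^p}$, and symmetrically in $V^j$. Since these errors are $O(\delta)$ uniformly in $n$, it suffices to prove the convergence of the cross integral when $V^j,V^k$ are additionally smooth and compactly supported.

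In this reduced setting, I would bound the cross integral by case analysis driven by (\ref{lab5}). Along the chosen subsequence, either (a) $\epsilon_n^j/\epsilon_n^k$ or its reciprocal tends to $\infty$, or (b) both ratios remain bounded. In case (a), without loss of generality $\epsilon_n^j/\epsilon_n^k\to 0$, and using the $L^\infty$ bounds on $V^j,V^k$ together with $\lvert\mathrm{supp}\,V_n^j\cap\mathrm{supp}\,V_n^k\rvert\leq\lvert\mathrm{supp}\,V_n^j\rvert\lesssim(\epsilon_n^j)^{d+1}$, the arithmetic $(d-2)/2+(p-1)(d-2)/2=d+4$ yields $\iint|V_n^j||V_n^k|^{p-1}\,dx\,dt\lesssim(\epsilon_n^j/\epsilon_n^k)^{(d+4)/2}\to 0$. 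In case (b) the scale ratios are comparable, so (\ref{lab5}) forces $\lvert t_n^j-t_n^k\rvert/\epsilon_n^j\to\infty$ or $\lvert x_n^j-x_n^k\rvert/\epsilon_n^j\to\infty$; since $\mathrm{supp}\,V_n^j$ and $\mathrm{supp}\,V_n^k$ are essentially balls of comparable radii $\sim\epsilon_n^j\sim\epsilon_n^k$ centered at $(t_n^j,x_n^j)$ and $(t_n^k,x_n^k)$, these supports are disjoint for $n$ large and the cross integral is identically zero.

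The main obstacle is the density step: one has to track the H\"older estimate for $\lvert V^k\rvert^{p-1}-\lvert\tilde V^k\rvert^{p-1}$ so that its $L^{p/(p-1)}$ bound depends only on $L^p$ norms and on $\delta$, and in particular stays uniform in $n$. Once that is in hand, both cases of the support analysis are immediate, and summing the resulting $O(1)$-many vanishing cross terms for $1\leq j,k\leq l$ completes the proof.
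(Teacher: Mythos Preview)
Your proposal is correct and follows essentially the same route as the paper: both apply Lemma~\ref{lab30} to reduce to the cross terms $\iint |V_n^j||V_n^k|^{p-1}$, approximate by compactly supported profiles (you spell out the density step with H\"older, while the paper simply writes ``without loss of generality''), and then split according to whether the scale ratio diverges or the rescaled translation parameters diverge. One small slip to fix: the identity you record as $(d-2)/2+(p-1)(d-2)/2=d+4$ actually gives $d+1$; the exponent $(d+4)/2$ on $\epsilon_n^j/\epsilon_n^k$ that you (correctly) obtain comes instead from $(p-1)(d-2)/2=(d+4)/2$ matching $d+1-(d-2)/2=(d+4)/2$.
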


\begin{proof}
Note that, for $n\geq 1$,
\begin{align*}
\lVert V_n^{j}\rVert_{L_{t,x}^\frac{2(d+1)}{d-2}}^\frac{2(d+1)}{d-2}=\int\int\left|\frac{1}{(\epsilon_n^{j})^\frac{d-2}{2}}V^{j}\left(\frac{t-t_n^{j}}{\epsilon_n^{j}},\frac{x-x_n^{j}}{\epsilon_n^{j}}\right)\right|^\frac{2(d+1)}{d-2}dxdt=\lVert V^{j}\rVert_{L_{t,x}^\frac{2(d+1)}{d-2}}^\frac{2(d+1)}{d-2}.
\end{align*}

Then, using Lemma $\ref{lab30}$,
\begin{align*}
\left|\left\lVert \sum_{j=1}^l V_n^{j}\right\rVert_{L_{t,x}^\frac{2(d+1)}{d-2}}^\frac{2(d+1)}{d-2}-\sum_{j=1}^l \left\lVert V^{j}\right\rVert_{L_{t,x}^\frac{2(d+1)}{d-2}}^\frac{2(d+1)}{d-2}\right|&\leq \int\int \left|\left|\sum_{j=1}^l V_n^{j}\right|^\frac{2(d+1)}{d-2}-\sum_{j=1}^l |V_n^{j}|^\frac{2(d+1)}{d-2}\right|dxdt\\
&\leq C_l\int\int\sum_{j\neq k} |V_n^{j}||V_n^{k}|^{\frac{d+4}{d-2}}dxdt.
\end{align*}

Let $j\neq k$ be given.
Note that ($\ref{lab5}$) shows that either
\begin{align}
\label{lab32}
\frac{\epsilon_n^{j}}{\epsilon_n^{k}}+\frac{\epsilon_n^{k}}{\epsilon_n^{j}}\mathop{\longrightarrow}_{n\rightarrow\infty}\infty,
\end{align}
or
\begin{align}
\label{lab33}
\textrm{for all}\,\,n,\quad \epsilon_n^j=\epsilon_n^k\quad\textrm{and}\quad
\left|\frac{t_n^{j}-t_n^{k}}{\epsilon_n^{j}}\right|+\left|\frac{x_n^{j}-x_n^{k}}{\epsilon_n^{j}}\right|\mathop{\longrightarrow}_{n\rightarrow\infty}\infty.
\end{align}

Without loss of generality, we assume that $V^{j}$ and $V^{k}$ are compactly supported.

\underline{\bf Case 1}: Suppose ($\ref{lab32}$) holds.  Then either $\displaystyle \frac{\epsilon_n^{k}}{\epsilon_n^{j}}\mathop{\longrightarrow}_{n\rightarrow\infty}\infty$ or $\displaystyle \frac{\epsilon_n^{j}}{\epsilon_n^{k}}\mathop{\longrightarrow}_{n\rightarrow\infty}\infty$.  Suppose $\displaystyle\frac{\epsilon_n^{k}}{\epsilon_n^{j}}\mathop{\longrightarrow}_{n\rightarrow\infty}\infty$ (the proof for the other case is identical).

Then, using the change of variables $x\mapsto \epsilon_n^{j}x+x_n^{j}$, $t\mapsto \epsilon_n^{j}t+t_n^{j}$, we get
\begin{align}
\label{lab34}\int\int \lefteqn{|V_n^{j}||V_n^{k}|^{\frac{d+4}{d-2}}dxdt}\\
\nonumber&= \int\frac{(\epsilon_n^{j})^d\epsilon_n^{j}}{(\epsilon_n^{j})^\frac{d-2}{2}(\epsilon_n^{k})^\frac{d+4}{2}}|V^{j}(t,x)|\left|V^{k}\left(\frac{(\epsilon_n^{j})t}{\epsilon_n^{k}}+\frac{t_n^{j}-t_n^{k}}{\epsilon_n^{k}},\frac{\epsilon_n^{j}x}{\epsilon_n^{k}}+\frac{x_n^{j}-x_n^{k}}{\epsilon_n^{k}}\right)\right|^{\frac{d+4}{d-2}}dxdt,\\
\nonumber&\leq C\left(\frac{\epsilon_n^{j}}{\epsilon_n^{k}}\right)^{\frac{d+4}{2}},
\end{align}
where we use the fact that $V^j$ and $V^k$ are continuous and compactly supported.
Note that $\displaystyle \frac{\epsilon_n^{k}}{\epsilon_n^{j}}\mathop{\longrightarrow}_{n\rightarrow\infty}\infty$ implies $\displaystyle \left(\frac{\epsilon_n^{j}}{\epsilon_n^{k}}\right)^\frac{d+4}{2}\mathop{\longrightarrow}_{n\rightarrow\infty} 0$.  Thus, in this case, $\displaystyle (\ref{lab34})\mathop{\longrightarrow}_{n\rightarrow\infty} 0$.

\underline{\bf Case 2}: Suppose ($\ref{lab33}$) holds.  Then
\begin{align}
\label{lab36} \lefteqn{\int\int |V_n^{j}||V_n^{k}|^{\frac{d+4}{d-2}}dxdt}\\
\nonumber &= \int\int \left(\frac{\epsilon_n^{j}}{\epsilon_n^{k}}\right)^\frac{d+4}{2}|V^{j}(t,x)|\left|V^{k}\left(\frac{\epsilon_n^{j}t}{\epsilon_n^{k}}+\frac{t_n^{j}-t_n^{k}}{\epsilon_n^{k}},\frac{\epsilon_n^{j}x}{\epsilon_n^{k}}+\frac{x_n^{j}-x_n^{k}}{\epsilon_n^{k}}\right)\right|^{\frac{d+4}{d-2}}dxdt\\
\nonumber &= \int\int |V^{j}(t,x)|\left|V^{k}\left(t+\frac{t_n^{j}-t_n^{k}}{\epsilon_n^{k}},x+\frac{x_n^{j}-x_n^{k}}{\epsilon_n^{k}}\right)\right|^{\frac{d+4}{d-2}}dxdt,
\end{align}
using $\epsilon_n^{j}=\epsilon_n^{k}$ for all $n$.  Since $V^{j}$ and $V^{k}$ are continuous and have compact support,
\begin{align*}
\lefteqn{|V^{j}(t,x)|\left|V^{k}\left(t+\frac{t_n^{j}-t_n^{k}}{\epsilon_n^{k}},x+\frac{x_n^{j}-x_n^{k}}{\epsilon_n^{k}}\right)\right|^{\frac{d+4}{d-2}}\leq}\\
&\hspace{1.5in}\chi_{\supp V^{j}}\,\,\lVert V^{j}\rVert_{L_{t,x}^\infty}\lVert V^{k}\rVert_{L_{t,x}^\infty}^{\frac{d+4}{d-2}}\in L^1(\mathbb{R}^{d+1}).
\end{align*}

Note that, by assumption, $\left|\frac{t_n^{j}-t_n^{k}}{\epsilon_n^{k}}\right|+\left|\frac{x_n^{j}-x_n^{k}}{\epsilon_n^{k}}\right|\rightarrow\infty$.  Then $V^{k}$ having compact support implies \begin{align*}
V^{k}\left(t+\frac{t_n^{j}-t_n^{k}}{\epsilon_n^{k}},x+\frac{x_n^{j}-x_n^{k}}{\epsilon_n^{k}}\right)\rightarrow 0
\end{align*}
 for all $(t,x)\in\mathbb{R}^{d+1}$.  Thus, by Lebesgue's Dominated Convergence Theorem, $\displaystyle (\ref{lab36})\mathop{\longrightarrow}_{n\rightarrow\infty} 0$.

Thus for every $j\neq k$, in each case, $\int\int |V_n^j||V_n^k|^\frac{d+4}{d-2}dxdt\rightarrow 0$.  This gives the result, \begin{align*}\int\int\sum_{j\neq k} |V_n^j||V_n^k|^\frac{d+4}{d-2}dxdt\mathop{\longrightarrow}_{n\rightarrow\infty} 0,\end{align*} which leads to desired claim.
\end{proof}
We are now ready to prove Theorem $1$.
\subsubsection{Proof of Theorem $1$}

Applying Proposition $\ref{prop1}$ to the sequences $(\partial_k u_{0,n})$, $k=1,\cdots,d$, and $(u_{1,n})$, we obtain a subsequence $(u_{0,n},u_{1,n})$, a sequence $(\epsilon_n^{j})\in\mathbb{R}^+$ satisfying ($\ref{lab29}$) and for every $j$, a sequence $(p_{0,n}^j,p_{1,n}^j)$ bounded in $\dot{H}^1\times L^2(\mathbb{R}^d)$ such that for every $l\geq 1$,
\begin{align} (u_{0,n},u_{1,n})=\sum_{j=1}^l (p_{0,n}^j,p_{1,n}^j)+(r_n^{l},s_n^{l}),\label{lab37} \end{align}
where $(\nabla p_{0,n}^j,p_{1,n}^j)$ is $(\epsilon_n^{j})$-oscillatory and $(\nabla r_n^{l},s_n^{l})$ is $(\epsilon_n^j)$-singular, for $1\leq j\leq l$ and
\begin{align} \label{lab38}\limsup_{n\rightarrow\infty} (\lVert \nabla r_n^{l}\rVert_{B}+\lVert s_n^{l}\rVert_B)\mathop{\longrightarrow}_{l\rightarrow\infty} 0.\end{align}

Moreover the identity ($\ref{lab28}$) implies, for every $l\geq 1$,
\begin{align}
\label{lab40}\lVert u_{0,n}\rVert_{\dot{H}^1}^2&= \sum_{j=1}^l \lVert p_{0,n}^j\rVert_{\dot{H}^1}^2+\lVert r_n^l\rVert_{\dot{H}^1}^2+o(1),\quad n\rightarrow\infty,\\
\label{lab41}\lVert u_{1,n}\rVert_{L^2}^2&= \sum_{j=1}^l \lVert p_{1,n}^j\rVert_{L^2}^2+\lVert s_n^l\rVert_{L^2}^2+o(1),\quad n\rightarrow\infty.
\end{align}

From ($\ref{lab37}$), we get the corresponding decomposition,
\begin{align}\label{lab42}S(t)(u_{0,n},u_{1,n})=\sum_{j=1}^l S(t)(p_{0,n}^j,p_{1,n}^j)+S(t)(r_n^l,s_n^l).\end{align}
\setcounter{secnumdepth}{6}
\paragraph{\bf Estimation of the remainder term $S(t)(r_n^l,s_n^l)$.}\ \\
\ \\
Note that, if $q$ is a finite energy solution to $(\partial_{tt}-\Delta) q=0$ then $\sigma_k(D)q$ is also a solution to the same equation, where $\sigma_k(\xi)=\chi_{2^k\leq|\xi|\leq 2^{(k+1)}}(\xi)$. Then the conservation law for all $\sigma_k(D)S(t)(r_n^l,s_n^l)$, $k\in\mathbb{Z}$, implies for some $C>0$,
\begin{align}
\label{lab43}
\left(\lVert\nabla S(t)(r_n^l,s_n^l)\rVert^2_{L_t^{\infty}B}+\lVert\partial_t S(t)(r_n^l,s_n^l)\rVert^2_{L_t^\infty B}\right)^\frac{1}{2}&\leq C(\lVert \nabla r_n^l \rVert_B+\lVert s_n^l\rVert_B)
\end{align}
Putting together ($\ref{lab38}$) and ($\ref{lab43}$), it follows that
\begin{align}
\limsup_{n\rightarrow\infty} \left(\lVert\nabla S(t)(r_n^l,s_n^l)\rVert^2_{L_t^{\infty} B}+\lVert \partial_t S(t)(r_n^l,s_n^l)\rVert^2_{L_t^\infty B}\right)^\frac{1}{2}\mathop{\longrightarrow}_{l\rightarrow\infty}0.
\label{lab44}
\end{align}

Now, applying Lemma $\ref{lab25}$ to $S(t)(r_n^l,s_n^l)$ and using ($\ref{lab40}$)-($\ref{lab41}$), we obtain the estimate,
\begin{align*}
\limsup_{n\rightarrow\infty}\lVert S(t)(r_n^l,s_n^l)\rVert_{L_t^\infty L_x^{\frac{2d}{d-2}}}&\leq C \, \limsup_{n\rightarrow\infty} \left[E((r_n^l,s_n^l))\right]^\frac{d-2}{2d} \limsup_{n\rightarrow\infty} \lVert\nabla S(t)(r_n^l,s_n^l)\rVert_{L_t^\infty B}^{\frac{2}{d}}\\
&\leq C \, \limsup_{n\rightarrow\infty} \left[E((u_{0,n},u_{1,n}))\right]^\frac{d-2}{2d} \limsup_{n\rightarrow\infty} \lVert\nabla S(t)(r_n^l,s_n^l)\rVert_{L_t^\infty B}^{\frac{2}{d}}.
\end{align*}
Then, the limit ($\ref{lab44}$) allows us to conclude that
\begin{align}
\label{lab44.5}\limsup_{n\rightarrow\infty} \lVert S(t)(r_n^l,s_n^l)\rVert_{L_t^{\infty} L_x^{\frac{2d}{d-2}}}\mathop{\longrightarrow}_{l\rightarrow\infty}0.
\end{align}

Moreover, by interpolation, the Strichartz inequality ($\ref{lab23}$) and ($\ref{lab40}$)-($\ref{lab41}$), we observe that
\begin{align}
\nonumber \lefteqn{\limsup_{n\rightarrow\infty} \lVert S(t)(r_n^l,s_n^l)\rVert_{L_{t,x}^{\frac{2(d+1)}{d-2}}}}&\\
\nonumber &\hspace{0.5in}\leq \limsup_{n\rightarrow\infty} \lVert S(t)(r_n^l,s_n^l)\rVert_{L_t^\frac{2d-1}{d-2}L_x^\frac{2d(2d-1)}{(2d-3)(d-2)}}^\frac{2d-1}{2(d+1)}\limsup_{n\rightarrow\infty} \lVert S(t)(r_n^l,s_n^l)\rVert_{L_t^\infty L_x^\frac{2d}{d-2}}^\frac{3}{2(d+1)}\\
\nonumber &\hspace{0.5in}\leq \limsup_{n\rightarrow\infty} E((r_n^l,s_n^l))^\frac{2d-1}{4(d+1)}\limsup_{n\rightarrow\infty} \lVert S(t)(r_n^l,s_n^l)\rVert_{L_t^\infty L_x^\frac{2d}{d-2}}^\frac{3}{2(d+1)}\\
\nonumber &\hspace{0.5in}\leq \limsup_{n\rightarrow\infty} E((u_{0,n},u_{1,n}))^\frac{2d-1}{4(d+1)}\limsup_{n\rightarrow\infty} \lVert S(t)(r_n^l,s_n^l)\rVert_{L_t^\infty L_x^\frac{2d}{d-2}}^\frac{3}{2(d+1)}.
\end{align}

Combining this with ($\ref{lab44.5}$),
\begin{align}
\label{lab45} \limsup_{n\rightarrow\infty} \lVert S(t)(r_n^l,s_n^l)\rVert_{L_{t,x}^\frac{2(d+1)}{d-2}}\mathop{\longrightarrow}_{l\rightarrow\infty} 0.
\end{align}

\paragraph{\bf Decomposition of the terms $S(t)(p_{0,n}^{j},p_{1,n}^j)$.}\ \\
\ \\
In a similar way as in \cite{BahouriGerard}, we decompose each term $S(t)(p_{0,n}^j,p_{1,n}^j)$ through the following lemma.
\begin{lemma}
\label{lab46}
\label{lab47}Let $(p_{0,n},p_{1,n})_{n\in\mathbb{N}}$ be a bounded sequence in $\dot{H}^1\times L^2(\mathbb{R}^d)$ with $d\geq 3$, such that $(\nabla p_{0,n},p_{1,n})$ is $(\epsilon_n)$-oscillatory for some sequence $(\epsilon_n)$ in $\mathbb{R}^+$.  Then there exists a subsequence of $(p_{0,n},p_{1,n})$ (still denoted $(p_{0,n},p_{1,n})$), a sequence $(V_0^\alpha,V_1^\alpha)_{\alpha\in\mathbb{N}}$, and pairs $(t_n^\alpha,x_n^\alpha)\in \mathbb{R}\times\mathbb{R}^d$ such that for every $\alpha\neq\alpha'$,
\begin{align}
\label{lab47.5}
\frac{|t_n^\alpha-t_n^\beta|}{\epsilon_n}+\frac{|x_n^\alpha-x_n^\beta|}{\epsilon_n}\mathop{\longrightarrow}_{n\rightarrow\infty} \infty,
\end{align}
and for every $A\geq 1$, if $V^\alpha=S(t)(V_0^\alpha,V_1^\alpha)$ and $V_n^\alpha(t,x)=\frac{1}{(\epsilon_n)^\frac{d-2}{2}}V^\alpha\left(\frac{t-t_n^\alpha}{\epsilon_n},\frac{x-x_n^\alpha}{\epsilon_n}\right)$, then for every $x\in\mathbb{R}^d$,
\begin{align}
\label{lab48}p_{0,n}(x)&=\sum_{\alpha=1}^A V_n^\alpha(0,x)+\rho_{0,n}^A(x),\\
\label{lab49}p_{1,n}(x)&=\sum_{\alpha=1}^A \partial_t V_n^\alpha(0,x)+\rho_{1,n}^A(x),
\end{align}
with \begin{align}
\label{lab50}\limsup_{n\rightarrow\infty} \lVert S(t)(\rho_{0,n}^A,\rho_{1,n}^A)\rVert_{L_{t,x}^\frac{2(d+1)}{(d-1)}}\mathop{\longrightarrow}_{A\rightarrow\infty}0.
\end{align}
We also have,
\begin{align}
\nonumber\lefteqn{\lVert p_{0,n}\rVert_{\dot{H}^1}^2+\lVert p_{1,n}\rVert_{L^2}^2}&\\
\label{lab51}&\hspace{0.5in}=\sum_{\alpha=1}^A \left(\lVert V_0^\alpha\rVert_{\dot{H}^1}^2+\lVert V_1^\alpha\rVert_{L^2}^2\right)+\lVert \rho_{0,n}^A\rVert_{\dot{H}^1}^2+\lVert \rho_{1,n}^A\rVert_{L^2}^2+o(1),\quad n\rightarrow\infty.
\end{align}
\end{lemma}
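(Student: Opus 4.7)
The plan is to adapt the Bahouri--Gerard bubble extraction procedure to dimensions $d\geq 3$ in the $(\epsilon_n)$-oscillatory setting. By rescaling $(x,t)\mapsto (\epsilon_n x, \epsilon_n t)$, I may assume $\epsilon_n\equiv 1$, so that $(\nabla p_{0,n}, p_{1,n})$ has Fourier support essentially confined to a fixed dyadic annulus (in the sense controlled by $\lVert\cdot\rVert_B$); once the profiles have been extracted, undoing the rescaling restores the factor $(\epsilon_n)^{-(d-2)/2}V^\alpha((t-t_n^\alpha)/\epsilon_n,(x-x_n^\alpha)/\epsilon_n)$ in the definition of $V_n^\alpha$.

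Setting $(\rho_{0,n}^0, \rho_{1,n}^0):=(p_{0,n}, p_{1,n})$ and
\begin{align*}
\nu^A := \limsup_{n\to\infty} \lVert S(t)(\rho_{0,n}^A, \rho_{1,n}^A)\rVert_{L^{2(d+1)/(d-1)}_{t,x}},
\end{align*}
I would iteratively construct profiles. If $\nu^{A-1}=0$ the process stops and (\ref{lab50}) holds trivially. Otherwise, the key step is an inverse Strichartz inequality: interpolating the $L^{2(d+1)/(d-1)}_{t,x}$ norm (which corresponds to the $\dot H^{1/2}\times\dot H^{-1/2}$-scaling) against an energy Strichartz bound and applying the refined Sobolev inequality of Lemma \ref{lab25} to dyadic frequency pieces yields, along a subsequence, a space-time core $(t_n^A, x_n^A)\in\mathbb{R}\times\mathbb{R}^d$ such that
\begin{align*}
S(-t_n^A)\bigl(\rho_{0,n}^{A-1}(\cdot+x_n^A),\,\rho_{1,n}^{A-1}(\cdot+x_n^A)\bigr) \rightharpoonup (V_0^A, V_1^A)
\end{align*}
weakly in $\dot H^1\times L^2$, with a quantitative lower bound $\lVert V_0^A\rVert_{\dot H^1}^2 + \lVert V_1^A\rVert_{L^2}^2 \geq c\,(\nu^{A-1})^{2\gamma}$ for a positive exponent $\gamma$ and $c>0$ depending only on $d$ and the uniform energy bound. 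I then set $V^A := S(t)(V_0^A,V_1^A)$, $V_n^A(t,x):=V^A(t-t_n^A,x-x_n^A)$, and define $(\rho_{0,n}^A, \rho_{1,n}^A) := (\rho_{0,n}^{A-1}-V_n^A(0,\cdot),\, \rho_{1,n}^{A-1}-\partial_t V_n^A(0,\cdot))$.

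Two consequences of this weak-convergence construction drive the rest of the argument. First, expanding the squared $\dot H^1$ and $L^2$ norms of $\rho_{0,n}^{A-1}$ and $\rho_{1,n}^{A-1}$ and noting that the cross terms vanish in the limit (by weak convergence of the shifted pair to $(V_0^A, V_1^A)$, combined with the identity $\lVert V_n^A(0)\rVert_{\dot H^1}^2 + \lVert \partial_t V_n^A(0)\rVert_{L^2}^2 = \lVert V_0^A\rVert_{\dot H^1}^2 + \lVert V_1^A\rVert_{L^2}^2$ coming from energy conservation for $V^A$) gives the Pythagorean expansion (\ref{lab51}) by induction on $A$. Second, for $\alpha\neq\beta$ the orthogonality (\ref{lab47.5}) follows by contradiction: if $(t_n^\alpha - t_n^\beta, x_n^\alpha - x_n^\beta)$ stayed bounded along a subsequence, the extraction at the $\beta$-th stage would produce a nonzero weak limit at the $\alpha$-th core, contradicting that $V_n^\alpha(0)$ has already been subtracted in forming $\rho_{0,n}^{\beta-1}$.

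The Pythagorean identity together with the uniform energy bound forces $\sum_{\alpha\geq 1}(\lVert V_0^\alpha\rVert_{\dot H^1}^2 + \lVert V_1^\alpha\rVert_{L^2}^2) < \infty$; hence the profile energies tend to zero, and the quantitative inverse Strichartz lower bound forces $\nu^A \to 0$, which is (\ref{lab50}). A standard Cantor diagonal extraction produces a single subsequence that works for all $A$ simultaneously. The principal obstacle is the inverse Strichartz step itself: one must convert a positive $L^{2(d+1)/(d-1)}_{t,x}$ norm of the free wave evolution into physical-space concentration of the data. This is precisely where Lemma \ref{lab25} enters, converting Littlewood--Paley mass in the dyadic block singled out by the $(\epsilon_n)$-oscillatory hypothesis into a genuine concentration core, after which a weak compactness argument extracts $(V_0^A, V_1^A)$.
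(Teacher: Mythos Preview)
Your extraction scheme --- rescale to $\epsilon_n\equiv 1$, peel off weak limits of space-time translates, get the Pythagorean expansion from weak convergence, and obtain orthogonality \eqref{lab47.5} by contradiction --- is sound and matches the paper's Step~1. The paper, however, runs the iteration on a different quantity: it chooses each profile to nearly maximize $\eta(\rho^A):=\sup\{E(V_0,V_1)^{1/2}:(V_0,V_1)\text{ a weak limit of space-time translates of }\rho^A\}$, so that $\eta(\rho^A)\to 0$ by construction, and only afterward (Step~2) proves that $\eta\to 0$ forces the Strichartz norm of the remainder to vanish. Your reorganization around $\nu^A$ requires that implication up front, packaged as an inverse Strichartz lemma with constants uniform in $A$.

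That inverse Strichartz step is where your sketch has a real gap. Lemma~\ref{lab25} is not the relevant tool: refined Sobolev converts a large $L^{2d/(d-2)}_x$ norm into concentration at some dyadic \emph{frequency} scale, but the $(\epsilon_n)$-oscillatory hypothesis has already fixed the scale, and what you must produce is a space-time point $(t_n^A,x_n^A)$. The mechanism the paper uses (which, read in your direction, gives exactly the inverse Strichartz you want) is different: truncate to a fixed annulus $a\le|\xi|\le b$ --- harmless by the oscillatory hypothesis, which the paper checks is inherited by every remainder with undiminished rate since the Pythagorean identity also holds for the truncated norms; then interpolate the diagonal Strichartz norm through $L^\infty_tL^{2d/(d-2)}_x$ and $L^\infty_tL^2_x$ down to $L^\infty_{t,x}$; finally, a frequency-localized solution equals its convolution with a fixed Schwartz function, so pointwise values are continuous under weak $\dot H^1\times L^2$ convergence, and a lower bound on $\lVert S(t)\rho^A\rVert_{L^\infty_{t,x}}$ produces a weak limit of quantitatively large energy. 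Lemma~\ref{lab25} plays no role in the proof of this lemma. (A side remark: the exponent $\tfrac{2(d+1)}{d-1}$ in \eqref{lab50} is a typo for the $\dot H^1$-critical $\tfrac{2(d+1)}{d-2}$ used throughout the proof; your identification of it as $\dot H^{1/2}$-scaled is correct but shows you did not reconcile the statement with its context.)
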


\begin{proof}[Proof of Lemma $\ref{lab46}$]
For each $(P_{0,n},P_{1,n})_{n\in\mathbb{N}}$ a bounded sequence in $\dot{H}^1\times L^2(\mathbb{R}^d)$, define \begin{align*}
\mathcal{V}((P_{0,n},P_{1,n}))=\left\{\,\,\begin{array}{l}(V_0,V_1)\\\in\dot{H}^1\times L^2\end{array}\,\,\,\middle|\begin{array}{c}\textrm{ there exists a sequence }\,(s_n,y_n)\subset \mathbb{R}\times\mathbb{R}^d\\
\textrm{such that, up to a subsequence, }\\\displaystyle S(s_n)(P_{0,n},P_{1,n})(y+y_n)\mathop{\rightharpoonup}_{n\rightarrow\infty} V_0\,\textrm{weakly in }\,\dot{H}^1\quad\,{and}\\\displaystyle \partial_t S(s_n)(P_{0,n},P_{1,n})(y+y_n)\mathop{\rightharpoonup}_{n\rightarrow\infty} V_1\,\textrm{weakly in }\,L^2\end{array}\right\},
\end{align*}
and
\begin{align*}\eta((P_{0,n},P_{1,n}))=\sup\{\left[E((V_0,V_1))\right]^\frac{1}{2}: (V_0,V_1)\in \mathcal{V}((P_{0,n},P_{1,n}))\}.\end{align*}

Note that if we take $s_n=0$ and $y_n=0$, the boundedness of $(P_{0,n},P_{1,n})$ implies that $S(0)(P_{0,n},P_{1,n})(y)$ and $\partial_t S(0)(P_{0,n},P_{1,n})(y)$ have weakly convergent subsequences in $\dot{H}^1(\mathbb{R}^d)$ and $L^2(\mathbb{R}^d)$ respectively, and thus $\mathcal{V}((P_{0,n},P_{1,n}))$ is nonempty.

Note that for each such $(P_{0,n},P_{1,n})$, the weak lower semicontinuity of the norm implies
\begin{align*}
\eta((P_{0,n},P_{1,n}))\leq \limsup_{n\rightarrow\infty}\left[E((P_{0,n},P_{1,n}))\right]^\frac{1}{2}.\end{align*}

For $n\in\mathbb{N}$, put $P_{0,n}(x)=\epsilon_n^\frac{d-2}{2}p_{0,n}(\epsilon_nx)$, $P_{1,n}(x)=\epsilon_n^{\frac{d-2}{2}+1}p_{1,n}(\epsilon_nx)$ for $x\in\mathbb{R}^d$.

The proof proceeds in two steps.  In Step $1$, we obtain the desired decomposition with a weaker version of the smallness condition ($\ref{lab50}$).  Then Step $2$ completes the argument by providing the desired condition ($\ref{lab50}$).

{\bf\underline{Step 1}}: (Decomposition)  We claim that there exists a subsequence of $(P_{0,n},P_{1,n})$ (still denoted $(P_{0,n},P_{1,n})$), a sequence $(V_0^\alpha,V_1^\alpha)_{\alpha\in\mathbb{N}}$, and pairs $(s^\alpha,y^\alpha)\subset\mathbb{R}\times\mathbb{R}^d$ such that for every $\alpha\neq\alpha'$,
\begin{align}
\label{lab53}|s_n^\alpha-s_n^{\alpha'}|+|y_n^\alpha-y_n^{\alpha'}|\mathop{\longrightarrow}_{n\rightarrow\infty} \infty,
\end{align}
and for every $A\geq 1$, if $V^\alpha=S(t)(V_0^\alpha,V_1^\alpha)$ and $V_n^\alpha=V^\alpha(s-s_n^\alpha,y-y_n^\alpha)$,
\begin{align}\label{lab54}P_{0,n} (y)=\sum_{\alpha=1}^A V_n^{\alpha}(0, y)+P_{0,n}^{A}(y),\\
\label{lab55}P_{1,n}(y)=\sum_{\alpha=1}^A \partial_t V_n^\alpha(0,y)+P_{1,n}^A(y)
\end{align}
with
\begin{align*}
\lim_{A\rightarrow\infty} \eta((P_{0,n}^A,P_{1,n}^A))=0.
\end{align*}
and
\begin{align}
\nonumber\lefteqn{\lVert P_{0,n}\rVert_{\dot{H}^1}^2+\lVert P_{1,n}\rVert_{L^2}^2}&\\
\label{lab56}&\hspace{0.5in}=\sum_{\alpha=1}^A \left(\lVert V_0^\alpha\rVert_{\dot{H}^1}^2+\lVert V_1^\alpha\rVert_{L^2}^2\right)+\lVert P_{0,n}^A\rVert_{\dot{H}^1}^2+\lVert P_{1,n}^A\rVert_{L^2}^2+o(1)\quad n\rightarrow\infty.
\end{align}

To see this claim, note that if $\eta((P_{0,n},P_{1,n}))=0$, we can take $V_0^{\alpha}=0$, $V_1^\alpha=0$ for all $\alpha$.  On the other hand, if $\eta((P_{0,n},P_{1,n}))>0$, then there exists $(V_0^{1},V_1^1)\in \mathcal{V}((P_{0,n},P_{1,n}))$
such that
\[\left[E((V_0^1,V_1^1))\right]^\frac{1}{2}\geq\frac{1}{2}\eta((P_{0,n},P_{1,n}))>0.\]

Then, by the definition of the set, we can choose a sequence $(s_n^1, y_{n}^1) \subset\mathbb{R}\times\mathbb{R}^d$ such that, up to a subsequence,
\begin{align}\label{lab58}S(s_n^1)(P_{0,n},P_{1,n})(y+y_n^1)\mathop{\rightharpoonup}_{n\rightarrow\infty} V_0^1\quad\textrm{weakly in}\,\dot{H}^1,\\
\partial_t S(s_n^1)(P_{0,n},P_{1,n})(y+y_n^1)\mathop{\rightharpoonup}_{n\rightarrow\infty} V_1^1\quad\textrm{weakly in}\,L^2,\end{align}
and we set
\begin{align*}P_{0,n}^1(y):=P_{0,n}(y)-S(-s_n^1)(V_0^1,V_1^1)(y-y_n^1),\\ P_{1,n}^1(y):=P_{1,n}(y)-\partial_t S(-s_n^1)(V_0^1,V_1^1)(y-y_n^1).\end{align*}

Then, using the conservation of energy and noting that
\begin{align*}
S(s_n^1)(P_{0,n}^1,P_{1,n}^1)(y+y_n^1)\mathop{\rightharpoonup}_{n\rightarrow\infty} 0\quad\textrm{and}\quad\partial_t S(s_n^1)(P_{0,n},P_{1,n})(y+y_n^1)\mathop{\rightharpoonup}_{n\rightarrow\infty} 0,
\end{align*}
 we obtain the identity
\begin{align}
\nonumber\lefteqn{\lVert P_{0,n}\rVert_{\dot{H}^1}^2+\lVert P_{1,n}\rVert_{L^2}^2}&\\
\nonumber&\hspace{0.2in}=\big\lVert S(-s_n^1)(V_0^1,V_1^1)(\cdot-y_n^1)\big\rVert_{\dot{H}^1}^2+\big\lVert \partial_t S(-s_n^1)(V_0^1,V_1^1)(\cdot-y_n^1)\big\rVert_{L^2}^2+\big\lVert P_{0,n}^1\big\rVert_{\dot{H}^1}^2+\big\lVert P_{1,n}^1\big\rVert_{L^2}^2\\
\nonumber&\hspace{0.8in}+2\big\langle P_{0,n}^1,S(-s_n^1)(V_0^1,V_1^1)(\cdot-y_n^1)\big\rangle_{\dot{H}^1}+2\big\langle P_{1,n}^1,\partial_t S(-s_n^1)(V_0^1,V_1^1)(\cdot-y_n^1)\big\rangle_{L^2}\\
\nonumber&\hspace{0.2in}=\big\lVert V_0^1\big\rVert_{\dot{H}^1}^2+\big\lVert V_1^1\big\rVert_{L^2}^2+\big\lVert P_{0,n}^1\big\rVert_{\dot{H}^1}^2+\big\lVert P_{1,n}^1\big\rVert_{L^2}^2\\
\nonumber&\hspace{0.8in}+2\big\langle S(s_n^1)(P_{0,n}^1,P_{1,n}^1)(\cdot+y_n^1),V_0^1\big\rangle_{\dot{H}^1}+2\big\langle \partial_t S(s_n^1)(P_{0,n}^1,P_{1,n}^1)(\cdot+y_n^1),V_1^1\big\rangle_{L^2}\\
&\hspace{0.2in}=\lVert V_0^1\rVert_{\dot{H}^1}^2+\lVert V_1^1\rVert_{L^2}^2+\lVert P_{0,n}^1\rVert_{\dot{H}^1}^2+\lVert P_{1,n}^1\rVert_{L^2}^2+o(1),\quad n\rightarrow\infty.
\label{lab58.5}
\end{align}

Now we can repeat the above process while replacing $(P_{0,n},P_{1,n})$ with the pair $(P_{0,n}^1,P_{1,n}^1)$.  If $\eta((P_{0,n}^1,P_{1,n}^1))>0$ then, in this case, we get $V_0^{2},V_1^2$, $(s_n^{2},y_n^{2})$ and $(P_{0,n}^2,P_{1,n}^2)$. Moreover, \[|s_n^{1}-s_n^{2}|+|y_n^{1}-y_n^{2}|\mathop{\longrightarrow}_{n\rightarrow\infty}\infty.\]
 If not, then we can find a subsequence (still indexed by $n$) such that
\begin{align*}
s_n^{1}-s_n^{2}=s^*+a_n,\quad a_n\rightarrow 0\quad\textrm{and}\quad y_n^1-y_n^2=y^*+b_n,\quad b_n\rightarrow 0.
\end{align*}
so that for every $(h_1,h_2)\in \dot{H}^1\times L^2(\mathbb{R}^d)$ a change of variables followed by using the strong convergence of $S(s^*+a_n)(h_1,h_2)(\cdot+y^*+b_n)$ and $\partial_t S(s^*+a_n)(h_1,h_2)(\cdot+y^*+b_n)$ and the weak convergence of $S(s_n^1)(P_{0,n}^1,P_{1,n}^1)(\cdot+y_n^1)$ and $\partial_t S(s_n^1)(P_{0,n}^1,P_{1,n}^1)(\cdot+y_n^1)$ imply that
\begin{align*}
\lefteqn{\langle S(s_n^2)(P_{0,n}^1,P_{1,n}^1)(\cdot+y_n^2),h_1\rangle_{\dot{H}^1}+\langle \partial_t S(s_n^2)(P_{0,n}^1,P_{1,n}^1)(\cdot+y_n^2),h_2\rangle_{L^2}}&\\
&\hspace{0.2in}=\langle S(s_n^1)(P_{0,n}^1,P_{1,n}^1)(\cdot+y_n^1),S(s^*+a_n)(h_1,h_2)(\cdot+y^*+b_n)\rangle_{\dot{H}^1}\\
&\hspace{0.4in}+\langle \partial_t S(s_n^1)(P_{0,n}^1,P_{1,n}^1)(\cdot+y_n^1),\partial_t S(s^*+a_n)(h_1,h_2)(\cdot+y^*+b_n)\rangle_{L^2}\\
&\hspace{0.2in}\rightarrow \langle 0,S(s^*)(h_1,h_2)(\cdot+y^*)\rangle_{\dot{H}^1}+\langle 0,S(s^*)(h_1,h_2)(\cdot+y^*)\rangle_{L^2}=0.
\end{align*}
Then, recalling that $S(s_n^2)(P_{0,n}^1,P_{1,n}^1)(\cdot+y_n^2)\rightharpoonup V_0^2$, $\partial_t S(s_n^2)(P_{0,n}^1,P_{1,n}^1)(\cdot+y_n^2)\rightharpoonup V_1^2$, the uniqueness of weak limits would imply that $V_0^2=0$ and $V_1^2=0$, and therefore $\eta ((P_{0,n}^1,P_{1,n}^1))=0$, which gives a contradiction.
By iterating this process and using a diagonal extraction, we obtain the sequence $(V_0^\alpha,V_1^{\alpha})$ and pairs $(s_n^{\alpha},y_n^{\alpha})$ satisfying the claims ($\ref{lab53}$)-($\ref{lab56}$). Moreover, ($\ref{lab56}$)  implies the convergence of the series $\displaystyle\sum_{\alpha} \left[E((V_0^\alpha,V_1^\alpha))\right]^\frac{1}{2}$, and hence
\[\lim_{\alpha\rightarrow\infty} \left[E((V_0^\alpha,V_1^\alpha))\right]^\frac{1}{2}=0.\]
In addition, the construction gives the inequality,
\[\left[E((V_0^\alpha,V_1^\alpha))\right]^\frac{1}{2}\geq\frac{1}{2}\eta((P_{0,n}^{\alpha-1},P_{1,n}^{\alpha-1})),\] which implies that \begin{align}\label{lab59}\eta((P_{0,n}^A,P_{1,n}^A))\mathop{\longrightarrow}_{A\rightarrow\infty}0.\end{align}  This completes the claim and Step $1$.

For $n,\alpha\geq 1$, set $t_n^\alpha=\epsilon_ns_n^\alpha$ and $x_n^\alpha=\epsilon_ny_n^\alpha$ and
\begin{align*}
P_{0,n}^A(y)=\epsilon_n^\frac{d-2}{2}\rho_{0,n}^A(\epsilon_n y),\quad P_{1,n}^A(y)=\epsilon_n^{\frac{d-2}{2}+1}\rho_{1,n}^A(\epsilon_n y).
\end{align*}
Then ($\ref{lab54}$)-($\ref{lab56}$) imply the desired equalities ($\ref{lab48}$)-($\ref{lab49}$) and ($\ref{lab51}$).  To complete the proof of Lemma $\ref{lab46}$, it remains to show the limit ($\ref{lab50}$).  This is the content of the next step.

{\bf\underline{Step 2}}: ($\eta((P_{0,n}^A,P_{1,n}^A))\rightarrow 0$ implies $\displaystyle\limsup_{n\rightarrow\infty}\lVert S(t)(P_{0,n}^A,P_{1,n}^A)\rVert_{L_{t,x}^\frac{2(d+1)}{(d-2)}}\rightarrow 0$)

Observe that $(\nabla p_{0,n},p_{1,n})$ $(\epsilon_n)$-oscillatory implies \begin{align}
\label{lab59.25}\limsup_{n\rightarrow\infty} \int_{\{|\eta|\leq \frac{1}{R}\}\cup\{|\eta|\geq R\}} \left[|\eta|^2|\hat{P}_{0,n}(\eta)|^2+|\hat{P}_{1,n}(\eta)|^2\right]d\eta\mathop{\longrightarrow}_{R\rightarrow\infty}0.
\end{align}

Moreover, we claim that for $\alpha\geq 1$,
\begin{align}\nonumber \lefteqn{\limsup_{n\rightarrow\infty} \int_{\{|\eta|\leq \frac{1}{R}\}\cup\{|\eta|\geq R\}} \left[|\eta|^2|\left(S(-t_n^\alpha)(V_0^\alpha,V_1^\alpha)(\cdot -x_n^\alpha)\right)^\wedge(\eta)|^2+\right.}\\
&\label{lab59.5}\hspace{0.7in} \left.|\partial_t\left(S(-t_n^\alpha)(V_0^\alpha,V_1^\alpha)(\cdot -x_n^\alpha)\right)^\wedge(\eta)|^2\right]d\eta\mathop{\longrightarrow}_{R\rightarrow\infty} 0
\end{align}

To see ($\ref{lab59.5}$), for each $R>0$ define $\sigma_R(\xi)=\chi_{\{|\xi|\leq \frac{1}{R}\}\cup \{|\xi|\geq R\}}(\xi)$.  Then for $\alpha\geq 1$,
\begin{align*}
\sigma_R(D) S(s_n^{\alpha+1})(P^\alpha_{0,n},P^\alpha_{1,n})(y+y_n^{\alpha+1})\rightharpoonup \sigma_R(D)V_0^{\alpha+1}\quad\textrm{in}\quad\dot{H}^1
\end{align*}
 and
\begin{align*}
\sigma_R(D) \partial_t S(s_n^{\alpha+1})(P^\alpha_{0,n},P^\alpha_{1,n})(y+y_n^{\alpha+1})\rightharpoonup \sigma_R(D)V_1^{\alpha+1}\quad\textrm{in}\quad L^2.
\end{align*}
Note that $\sigma_R(D)S(t)(V_0^\alpha,V_1^\alpha)$ is a solution of the wave equation, and thus the conservation of energy and the weak lower semicontinuity of the norms imply that
\begin{align*}
&\lVert \sigma_R(D)S(-s_n^{\alpha+1})(V_0^{\alpha+1},V_1^{\alpha+1})(\cdot -y_n^{\alpha+1})\rVert^2_{\dot{H}^1}+\lVert \partial_t\sigma_R(D)S(-s_n^{\alpha+1})(V_0^{\alpha+1},V_1^{\alpha+1})(\cdot -y_n^{\alpha+1})\rVert^2_{L^2}\\
&=\lVert \sigma_R(D) V_0^{\alpha+1}\rVert^2_{\dot{H}^1}+\lVert \sigma_R(D)V_1^{\alpha+1}\rVert^2_{L^2}\\
&\hspace{0.3in}\leq \liminf_{n\rightarrow\infty} \lVert \sigma_R(D) S(s_n^{\alpha+1})(P_{0,n}^\alpha,P_{1,n}^\alpha)(y+y_n^{\alpha+1})\rVert^2_{\dot{H}^1}\\
&\hspace{0.6in}+\liminf_{n\rightarrow\infty}\lVert \partial_t\sigma_R(D)S(s_n^{\alpha+1})(P_{0,n}^\alpha,P_{1,n}^\alpha)(y+y_n^{\alpha+1})\rVert^2_{L^2}\\
&\hspace{0.9in}\leq \limsup_{n\rightarrow\infty} \left(\lVert \sigma_R(D) P_{0,n}^\alpha\rVert^2_{\dot{H}^1}+\lVert \sigma_R(D) P_{1,n}^\alpha\rVert^2_{L^2}\right).\\
\end{align*}
The limit ($\ref{lab59.5}$) then follows by induction.  Combining ($\ref{lab59.25}$) and ($\ref{lab59.5}$) and using the decompositions ($\ref{lab54}$)-($\ref{lab55}$) we get for every $A\geq 1$,
\begin{align}\label{lab60}\limsup_{n\rightarrow\infty} \int_{\{|\eta|\leq \frac{1}{R}\}\cup\{|\eta|\geq R\}} (|\eta|^2|\hat{P}_{0,n}^A(\eta)|^2+|\hat{P}_{1,n}^A(\eta)|^2)d\eta\mathop{\longrightarrow}_{R\rightarrow\infty}0.
\end{align}

We now claim that
\begin{align}
\nonumber\lefteqn{\limsup_{n\rightarrow\infty} \lVert S(t)(P_{0,n}^A,P_{1,n}^A)\rVert_{L_{t,x}^\frac{2(d+1)}{(d-2)}}}\hspace{1in}\\
&\leq C\limsup_{n\rightarrow\infty} \left[E((P_{0,n}^A,P_{1,n}^A))\right]^\frac{d-2}{2(d+1)}\eta((P_{0,n}^A,P_{1,n}^A))^\frac{6}{2(d+1)}.\label{lab61}
\end{align}
This estimate will be used along with the limit ($\ref{lab59}$) to show the smallness condition ($\ref{lab50}$)

Note that ($\ref{lab60}$) implies it is enough to prove the claim while assuming that for $a,b\in (0,\infty)$ with $a<b$, the supports of
$\widehat{P_{0,n}^A}$ and $\widehat{P_{1,n}^A}$ are contained in $\Omega:=\{\eta:a\leq |\eta|\leq b\}$ and the support of $[S(t)(P_{0,n},P_{1,n})]^{\sim}$ is contained in $\{(\sigma,\eta):a\leq |\sigma|\leq b, a\leq |\eta|\leq b\}$, where $\sim$ represents the Fourier transform in both the $t$ and $x$ variables.  The claim then follows immediately by an approximation argument.

We obtain the estimate ($\ref{lab61}$) in two steps.

$\bullet$ An estimate on $\displaystyle\limsup_{n\rightarrow\infty} \lVert S(t)(P_{0,n}^A,P_{1,n}^A)\rVert_{L_{t,x}^\frac{2(d+1)}{(d-2)}}$ which depends on $a$ and $b$:

Let us apply the interpolation inequality and ($\ref{lab22}$) to obtain the estimate,
\begin{align}
\nonumber\lVert S(t)(P_{0,n}^A,P_{1,n}^A)\rVert_{L^\frac{2(d+1)}{d-2}_{t,x}}&\leq\lVert S(t)(P_{0,n}^A,P_{1,n}^A)\rVert_{L_t^{\frac{2(d-1)}{d-2}}L_x^{\frac{2d(d-1)}{(d-2)^2}}}^\frac{d-1}{d+1}\lVert S(t)(P_{0,n}^A,P_{1,n}^A)\rVert_{L_t^\infty L_x^\frac{2d}{d-2}}^\frac{2}{d+1}\\
\label{lab62}&\leq C\left[E((P_{0,n}^A,P_{1,n}^A))\right]^\frac{d-1}{2(d+1)}\lVert S(t)(P_{0,n}^A,P_{1,n}^A)\rVert_{L_t^\infty L_x^\frac{2d}{d-2}}^\frac{2}{(d+1)}.
\end{align}

We next consider the norm $\lVert S(t)(P_{0,n}^A,P_{1,n}^A)\rVert_{L_t^\infty L_x^\frac{2d}{d-2}}$, where we apply the interpolation inequality to obtain
\begin{align}
\lVert S(t)(P_{0,n}^A,P_{1,n}^A)\rVert_{L_t^\infty L_x^\frac{2d}{d-2}}&\leq \lVert S(t)(P_{0,n}^A,P_{1,n}^A)\rVert_{L_t^\infty L_x^2}^\frac{d-2}{d}\lVert S(t)(P_{0,n}^A,P_{1,n}^A)\rVert_{L^\infty_{t,x}}^\frac{2}{d}.
\label{lab63}
\end{align}

We also have,
\begin{align}
\label{lab62.5}\lVert S(t)(P_{0,n}^A,P_{1,n}^A)\rVert_{L_t^\infty L_x^2}&\leq \sup_t \left(\int_\Omega \frac{|\eta|^2}{a^2}|(S(t)(P_{0,n}^A,P_{1,n}^A))^\wedge(\eta)|^2d\eta\right)^\frac{1}{2}\\
\nonumber&\leq\sup_t \frac{1}{a}\lVert \nabla S(t)(P_{0,n}^A,P_{1,n}^A)\rVert_{L_x^2(\mathbb{R}^d)}\\
&\leq\frac{C}{a}\left[E((P_{0,n}^A,P_{1,n}^A))\right]^\frac{1}{2},\label{lab64}
\end{align}
where to obtain ($\ref{lab62.5}$) we note that $a^2\leq |\eta|^2$ on $\Omega$ and the definition of $S(t)$ implies
\begin{align*}
(S(t)(P_{0,n}^A,P_{1,n}^A))^\wedge(\xi)&=\cos(t|\xi|)\widehat{P_{0,n}^A}(\xi)+\frac{1}{|\xi|}\sin(t|\xi|)\widehat{P_{1,n}^A}(\xi),
\end{align*}
so that $\supp [S(t)(P_{0,n}^A,P_{1,n}^A)]^{\wedge}\subset\Omega$ and $\supp \partial_s[S(t)(P_{0,n}^A,P_{1,n}^A)]^{\wedge}\subset\Omega$ for any $t\geq 0$.

We now estimate the norm $\lVert S(t)(P_{0,n}^A,P_{1,n}^A)\rVert_{L_{t,x}^\infty}$ in ($\ref{lab63}$).  Recall that $\tilde{f}$ denotes the Fourier transform of $f$ in both the $t$ and $x$ variables.  Choose $\chi\in \mathcal{S}(\mathbb{R}\times\mathbb{R}^d)$ such that $\widetilde{\chi}(\sigma,\eta)=1$ for each $(\sigma,\eta)\in\{a\leq |\sigma|\leq b,a\leq |\eta|\leq b\}$.  Then, for all $(\sigma,\eta)$, we have
\begin{align*}
[S(t)(P_{0,n}^A,P_{1,n}^A)]^{\sim}(\sigma,\eta)&=\widetilde{\chi}(\sigma,\eta)[S(t)(P_{0,n}^A,P_{1,n}^A)]^{\sim}(\sigma,\eta)\\
&=(\chi\star S(t)(P_{0,n}^A,P_{1,n}^A))^\sim (\sigma,\eta)\\
&=\left(\int\int \chi(-t,-x)S(\cdot+t)(P_{0,n}^A,P_{1,n}^A)(\cdot+x)dtdx\right)^{\sim}(\sigma,\eta).
\end{align*}

Taking the inverse Fourier transform of both sides, we see that
\begin{align*}
S(s)(P_{0,n}^A,P_{1,n}^A)(y)&=\int\int \chi(-t,-x)S(s+t)(P_{0,n}^A,P_{1,n}^A)(y+x)dtdx.
\end{align*}

We then use the definitions of $\limsup$, $\sup$, $\mathcal{V}$ and $\eta((P_{0,n}^A,P_{1,n}^A))$ to get
\begin{align}
\nonumber\lefteqn{\limsup_{n\rightarrow\infty} \lVert S(t)(P_{0,n}^A,P_{1,n}^A)\rVert_{L_{t,x}^\infty}}\hspace{1in}\\
\nonumber&=\sup_{\{s_n,y_n\}\subset \mathbb{R}\times\mathbb{R}^d} \limsup_{n\rightarrow\infty} |S(s_n)(P_{0,n}^A,P_{1,n}^A)(y_n)|\\
\nonumber&\leq \sup \left\{\left|\int\int \chi(-t,-x)S(t)(V_0,V_1)(x)dtdx\right|: (V_0,V_1)\in \mathcal{V}((P_{0,n}^A,P_{1,n}^A))\right\}\\
\nonumber &\leq\sup \left\{\lVert\chi\rVert_{L_t^1L_x^2}\lVert S(t)(V_0,V_1)\rVert_{L_t^\infty L_x^2}:(V_0,V_1)\in\mathcal{V}((P_{0,n}^A,P_{1,n}^A))\right\}\\
\nonumber &\leq C_{a,b}\sup \left\{E(V_0,V_1)^\frac{1}{2}:(V_0,V_1)\in\mathcal{V}((P_{0,n}^A,P_{1,n}^A))\right\}\\
\label{lab65}&\leq C_{a,b}\,\,\eta((P_{0,n}^A,P_{1,n}^A)).
\end{align}

Combining ($\ref{lab62}$)-($\ref{lab65}$), we have,
\begin{align}
\label{lab66}
\lefteqn{\limsup_{n\rightarrow\infty} \lVert S(t)(P_{0,n}^A,P_{1,n}^A)\rVert_{L^\frac{2(d+1)}{d-2}_{t,x}}}\\
&\hspace{0.75in}\leq C_{a,b}\limsup_{n\rightarrow\infty}\left[E((P_{0,n}^A,P_{1,n}^A))\right]^\frac{d^2+d-4}{2d(d+1)}\eta((P_{0,n}^A,P_{1,n}^A))^\frac{4}{d(d+1)}.\nonumber
\end{align}

$\bullet$ Removal of the dependence on $a$ and $b$:

To remove the dependence of the constant on $a$ and $b$, we first claim that
\begin{align}
\label{lab67}
\lVert S(t)(P_{0,n}^A,P_{1,n}^A)\rVert_{L^\frac{2(d+1)}{d-2}_{t,x}}^\frac{2(d+1)}{d-2}\mathop{\longrightarrow}_{n\rightarrow\infty} \sum_{\alpha=A+1}^\infty \lVert S(t)(V_0^\alpha,V_1^\alpha)\rVert_{L_{t,x}^\frac{2(d+1)}{d-2}}^\frac{2(d+1)}{d-2}.
\end{align}

To see this, note that ($\ref{lab54}$)-($\ref{lab55}$) and ($\ref{lab56}$) imply, for every $B>A$,
\begin{align}
\label{lab68}S(t)(P_{0,n}^A,P_{1,n}^A)&=\sum_{\alpha=A+1}^B S(s-s_n^\alpha)(V_0^\alpha,V_1^\alpha)(y-y_n^\alpha)+S(s)(P_{0,n}^B,P_{1,n}^B)(y),\\
\label{lab69}E((P_{0,n}^A,P_{1,n}^A))&=\sum_{\alpha=A+1}^B E((V_0^\alpha,V_1^\alpha))+E((P_{0,n}^B,P_{1,n}^B)),
\end{align}

By using ($\ref{lab68}$), Lemma \ref{lab31} and ($\ref{lab66}$), we have
\begin{align}
\nonumber \lefteqn{\limsup_{n\rightarrow\infty} \lVert S(t)(P_{0,n}^A,P_{1,n}^A)\rVert_{L_{t,x}^\frac{2(d+1)}{d-2}}}\hspace{.5in}\\
\nonumber &\leq \left(\sum_{\alpha=A+1}^B \lVert S(t)(V_0^\alpha,V_1^\alpha)\rVert_{L_{t,x}^\frac{2(d+1)}{d-2}}^\frac{2(d+1)}{d-2}\right)^\frac{1}{\frac{2(d+1)}{d-2}}+\limsup_{n\rightarrow\infty} \lVert S(t)(P_{0,n}^B,P_{1,n}^B)\rVert_{L_{t,x}^\frac{2(d+1)}{d-2}}\\
\nonumber &\leq\left(\sum_{\alpha=A+1}^B \lVert S(t)(V_0^\alpha,V_1^\alpha)\rVert_{L_{t,x}^\frac{2(d+1)}{d-2}}^\frac{2(d+1)}{d-2}\right)^\frac{d-2}{2(d+1)}+\\
\nonumber &\hspace{1in} C_{a,b}\limsup_{n\rightarrow\infty} \left[E((P_{0,n}^B,P_{1,n}^B))\right]^\frac{d^2+d-4}{2d(d+1)}\eta((P_{0,n}^B,P_{1,n}^B))^\frac{4}{d(d+1)}\\
\nonumber &\leq\left(\sum_{\alpha=A+1}^B \lVert S(t)(V_0^\alpha,V_1^\alpha)\rVert_{L_{t,x}^\frac{2(d+1)}{d-2}}^\frac{2(d+1)}{d-2}\right)^\frac{d-2}{2(d+1)}+\\
\label{lab69.5}&\hspace{1in} C_{a,b}\limsup_{n\rightarrow\infty} \left[E((P_{0,n},P_{1,n}))\right]^\frac{d^2+d-4}{2d(d+1)}\eta((P_{0,n}^B,P_{1,n}^B))^\frac{4}{d(d+1)},
\end{align}
where to obtain ($\ref{lab69.5}$) we have used ($\ref{lab69}$) and ($\ref{lab51}$).
Taking the limit in ($\ref{lab69.5}$) and using the limit ($\ref{lab59}$), we obtain,
\begin{align}
\label{lab70}
\limsup_{n\rightarrow\infty} \lVert S(t)(P_{0,n}^A,P_{1,n}^A)\rVert_{L_{t,x}^\frac{2(d+1)}{d-2}}&\leq \left(\sum_{\alpha=A+1}^\infty \lVert S(t)(V_0^\alpha,V_1^\alpha)\rVert_{L_{t,x}^\frac{2(d+1)}{d-2}}^\frac{2(d+1)}{d-2}\right)^\frac{d-2}{2(d+1)}.
\end{align}

On the other hand, ($\ref{lab68}$) and Lemma $\ref{lab31}$ imply,
\begin{align*}
\lefteqn{\left(\sum_{\alpha=A+1}^B \lVert S(t)(V_0^\alpha,V_1^\alpha)\rVert_{L_{t,x}^\frac{2(d+1)}{d-2}}^\frac{2(d+1)}{d-2}\right)^\frac{d-2}{2(d+1)}}&\\
&\hspace{0.5in}=\liminf_{n\rightarrow\infty} \lVert \sum_{\alpha=A+1}^B S(s-s_n^\alpha)(V_0^\alpha,V_1^\alpha)(y-y_n^\alpha)\rVert_{L_{t,x}^\frac{2(d+1)}{d-2}}\\
&\hspace{0.5in}\leq \liminf_{n\rightarrow\infty} \lVert S(t)(P_{0,n}^A,P_{1,n}^A)\rVert_{L_{t,x}^\frac{2(d+1)}{d-2}}+\liminf_{n\rightarrow\infty} \lVert S(t)(P_{0,n}^B,P_{1,n}^B)\rVert_{L_{t,x}^\frac{2(d+1)}{d-2}}\\
&\hspace{0.5in}\leq\liminf_{n\rightarrow\infty} \lVert S(t)(P_{0,n}^A,P_{1,n}^A)\rVert_{L_{t,x}^\frac{2(d+1)}{d-2}}+\\&\hspace{1in}C_{a,b}\limsup_{n\rightarrow\infty} \left[E((P_{0,n},P_{1,n}))\right]^\frac{d^2+d-4}{2d(d+1)}\eta((P_{0,n}^B,P_{1,n}^B))^\frac{4}{d(d+1)},
\end{align*}
where to obtain the last inequality we have used ($\ref{lab66}$), ($\ref{lab69}$) and ($\ref{lab51}$).
Using ($\ref{lab59}$), we get,
\begin{align}
\label{lab71}
\left(\sum_{\alpha=A+1}^\infty \lVert S(t)(V_0^\alpha,V_1^\alpha)\rVert_{L^\frac{2(d+1)}{d-2}}^\frac{2(d+1)}{d-2}\right)^\frac{d-2}{2(d+1)}&\leq\liminf_{n\rightarrow\infty} \lVert S(t)(P_{0,n}^A,P_{1,n}^A)\rVert_{L_{t,x}^\frac{2(d+1)}{d-2}}.
\end{align}

Combining the estimates ($\ref{lab70}$) and ($\ref{lab71}$) we obtain the claimed limit ($\ref{lab67}$).

We now return to the removal of the dependence of the estimate ($\ref{lab66}$) on $a$ and $b$.  Using ($\ref{lab67}$) with Strichartz's inequality, we get,
\begin{align}
\nonumber\lefteqn{\limsup_{n\rightarrow\infty} \lVert S(t)(P_{0,n}^A,P_{1,n}^A)\rVert_{L_{t,x}^\frac{2(d+1)}{(d-2)}}}\hspace{1in}\\
\nonumber&=\left(\sum_{\alpha=A+1}^\infty \lVert S(t)(V_0^\alpha,V_1^{\alpha})\rVert_{L_{t,x}^\frac{2(d+1)}{d-2}}^\frac{2(d+1)}{d-2}\right)^\frac{d-2}{2(d+1)}\\
\nonumber&\leq\left[\left(\sum_{\alpha=A+1}^\infty \lVert S(t)(V_0^\alpha,V_1^\alpha)\rVert_{L_{t,x}^\frac{2(d+1)}{d-2}}^2\right)\sup_{\alpha> A} \lVert S(t)(V_0^\alpha,V_1^{\alpha})\rVert_{L_{t,x}^\frac{2(d+1)}{d-2}}^\frac{6}{d-2}\right]^\frac{d-2}{2(d+1)}\\
\nonumber&\leq\left[C\left(\sum_{\alpha=A+1}^\infty \left[E((V_0^\alpha, V_1^\alpha))\right]\right)\limsup_{n\rightarrow\infty}\eta((P_{0,n}^A,P_{1,n}^A))^\frac{6}{d-2}\right]^\frac{d-2}{2(d+1)}\\
\label{lab72}&\leq\left[C\limsup_{n\rightarrow\infty} \left[E((P_{0,n}^A,P_{1,n}^A))\right]\,\,\eta((P_{0,n}^A,P_{1,n}^A))^\frac{6}{d-2}\right]^\frac{d-2}{2(d+1)},
\end{align}
where we use ($\ref{lab69}$) for the last inequality.  This gives the claimed inequality ($\ref{lab61}$).

To obtain ($\ref{lab50}$), observe that ($\ref{lab56}$) implies,
\begin{align}
\label{lab73}\limsup_{n\rightarrow\infty} E((P_{0,n}^A,P_{1,n}^A))&\leq\limsup_{n\rightarrow\infty}E((P_{0,n},P_{1,n})),
\end{align}
where the term on the right hand side is bounded due to the boundedness of the sequence $(P_{0,n},P_{1,n})$ in $\dot{H}^1\times L^2$.  Combining ($\ref{lab72}$) and ($\ref{lab73}$), we get,
\begin{align*}
\limsup_{n\rightarrow\infty} \lVert S(t)(P_{0,n}^A,P_{1,n}^A)\rVert_{L_{t,x}^\frac{2(d+1)}{(d-2)}}&\leq C\eta((P_{0,n}^A,P_{1,n}^A))^\frac{6}{2(d+1)}
\end{align*}
which, along with ($\ref{lab59}$), completes the proof of Lemma $\ref{lab46}$.
\end{proof}
\paragraph{\bf Synthesis.}\ \\
\ \\
We now turn to the claims stated in Theorem $\ref{lab4}$.

Applying Lemma $\ref{lab47}$ to each $(p_{0,n}^j,p_{1,n}^j)_{n\in\mathbb{N}}$, using a diagonal extraction and passing to a subsequence, one obtains, \begin{align} p_{0,n}^j(x)&=\sum_{\alpha=1}^{A_j} V_n^{j,\alpha}\left(0,x\right)+\rho_{0,n}^{j,A_j}(x),\\
p_{1,n}^j(x)&=\sum_{\alpha=1}^{A_j} \partial_t V_n^{j,\alpha}(0,x)+\rho_{1,n}^{j,A_j}(x).\end{align}

Now, plugging the above equalities into ($\ref{lab42}$), we have, \begin{align}\label{lab74}u_{0,n}(x)&=\sum_{j=1}^l \left(\sum_{\alpha=1}^{A_j} V_n^{j,\alpha}(0,x)\right)+w_{0,n}^{l,A_1,\cdots,A_l}(x),\\
u_{1,n}(x)&=\sum_{j=1}^l \left(\sum_{\alpha=1}^{A_j} \partial_t V_n^{j,\alpha}(0,x)\right)+w_{1,n}^{l,A_1,\cdots,A_l}(x),\end{align}
where \begin{align} \label{lab75}w_{0,n}^{l,A_1,\cdots ,A_l}(x)&=r_n^l(x)+\sum_{j=1}^l \rho_{0,n}^{j,A_j}(x),\\
w_{1,n}^{\alpha,A_1,\cdots,A_l}(x)&=s_n^l(x)+\sum_{j=1}^l \rho_{1,n}^{j,A_j}(x).\end{align}

Moreover by ($\ref{lab51}$), \begin{align} \nonumber \lefteqn{\lVert u_{0,n}\rVert_{\dot{H}^1}^2+\lVert u_{1,n}\rVert_{L^2}^2}&\\
\nonumber &\hspace{0.2in}=\sum_{j=1}^l \left(\lVert p_{0,n}^j\rVert_{\dot{H}^1}^2+\lVert p_{1,n}^j\rVert_{L^2}^2\right)+\lVert r_n^l\rVert_{\dot{H}^1}^2+\lVert s_n^l\rVert_{L^2}^2+o(1)\\
\nonumber &\hspace{0.2in}=\sum_{j=1}^l\left(\sum_{\alpha=1}^{A_j} \left(\lVert V_0^{j,\alpha}\rVert_{\dot{H}^1}+\lVert V_1^{j,\alpha}\rVert_{L^2}^2\right)+\lVert \rho_{0,n}^{j,A_j}\rVert_{\dot{H}^1}^2+\lVert \rho_{1,n}^{j,A_j}\rVert_{L^2}^2\right)+\lVert r_n^l\rVert_{\dot{H}^1}^2+\lVert s_n^l\rVert_{L^2}^2+o(1)\\
\label{lab76}&\hspace{0.2in}=\sum_{j=1}^l\left(\sum_{\alpha=1}^{A_j} \lVert V_0^{j,\alpha}\rVert_{\dot{H}^1}^2+\lVert V_1^{j,\alpha}\rVert_{L^2}^2\right)+\lVert w_{0,n}^{l,A_1,\cdots,A_l}\rVert_{\dot{H}^1}^2+\lVert w_{1,n}^{l,A_1,\cdots,A_l}\rVert_{L^2}^2+o(1),
\end{align}
where we use ($\ref{lab75}$) and the identity ($\ref{lab28}$) for the last equality.

We consider an enumeration $m$ of the pairs $(j,\alpha)$ with \begin{align*} m(j,\alpha)<m(k,\beta)\end{align*} for $j+\alpha< k+\beta$.  The above discussion then gives the claims ($\ref{lab6}$)-($\ref{lab7}$) and ($\ref{lab9}$) of Theorem $\ref{lab4}$.

To see the claim ($\ref{lab5}$), let $j\neq j'$ be given with $j=m(i,\alpha),j'=m(i',\alpha')$.  If $i\neq i'$, then ($\ref{lab29}$) implies that ($\ref{lab5}$) holds.  On the other hand, if we are in the case $i=i'$, then ($\ref{lab47.5}$) also implies ($\ref{lab5}$).  Thus in either case, ($\ref{lab5}$) holds, which was the desired claim.

To complete the theorem, it remains to prove the smallness condition ($\ref{lab8}$) and the limit ($\ref{lab11}$).  For the proof of ($\ref{lab8}$), it suffices to show \begin{align}\label{lab78}\lVert S(t)(w_{0,n}^{l,A_1,\cdots, A_l},w_{1,n}^{l,A_1,\cdots,A_l})\rVert_{L_{t,x}^\frac{2(d+1)}{d-2}}\mathop{\longrightarrow}_{n\rightarrow\infty} 0,\end{align}
when $\inf\{l,j+A_j:1\leq j\leq l\}$ tends to $\infty$.

Indeed, note that for each $\epsilon>0$, we may use the limit ($\ref{lab45}$) to choose $l_0$ such that for $l\geq l_0$, \begin{align}\label{lab79} \limsup_{n\rightarrow\infty} \lVert  S(t)(r_n^l,s_n^l)\rVert_{L_{t,x}^\frac{2(d+1)}{d-2}}&\leq\frac{\epsilon}{3}.\end{align}

Then, for every $l\geq l_0$, using the limit ($\ref{lab50}$), we choose $B_l$ such that for each $A\geq B_l$ and each $j\in \{1,\cdots,l\}$, \begin{align}\label{lab80} \limsup_{n\rightarrow\infty} \lVert S(t)(\rho_{0,n}^{j,A},\rho_{1,n}^{j,A})\rVert_{L_{t,x}^\frac{2(d+1)}{d-2}}&\leq\frac{\epsilon}{3l}.\end{align}

One then writes $S(t)(w_{0,n}^{l,A_1,\cdots,A_l},w_{1,n}^{l,A_1,\cdots,A_l})$ in the form \begin{align*} \lefteqn{S(t)(w_{0,n}^{l,A_1,\cdots,A_l},w_{1,n}^{l,A_1,\cdots,A_l})}\\
&=S(t)(r_n^l,s_n^l)+\sum_{\stackrel{1\leq j\leq l}{A_j<B_l}} S(t)(\rho_{0,n}^{j,B_l},\rho_{1,n}^{j,B_l})+\sum_{\stackrel{1\leq j\leq l}{A_j\geq B_l}} S(t)(\rho_{0,n}^{j,A_j},\rho_{1,n}^{j,A_j})+\gamma_n^{l,A_1,\cdots,A_l}\\
&=S(t)(r_n^l,s_n^l)+\sum_{1\leq j\leq l} S(t)(\rho_{0,n}^{j,\max\{A_j,B_l\}},\rho_{1,n}^{j,\max\{A_j,B_l\}})+\gamma_n^{l,A_1,\cdots,A_l} \end{align*}
where
\begin{align*}\gamma_n^{l,A_1,\cdots,A_l}&=\sum_{\stackrel{1\leq j\leq l}{A_j<B_l}}S(t)(\rho_{0,n}^{j,A_j},\rho_{1,n}^{j,A_j})-S(t)(\rho_{0,n}^{j,B_l},\rho_{1,n}^{j,B_l}),\\
&=\sum_{\stackrel{1\leq j\leq l}{A_j<B_l}}\sum_{A_j<\alpha\leq B_l} V_n^{j,\alpha}.\end{align*}

Applying ($\ref{lab79}$) and ($\ref{lab80}$), we have \begin{align}\label{lab81}\limsup_{n\rightarrow\infty} \lVert S(t)(w_{0,n}^{l,A_1,\cdots,A_l},w_{1,n}^{l,A_1,\cdots,A_l})\rVert_{L_{t,x}^\frac{2(d+1)}{d-2}}&\leq\frac{2\epsilon}{3}+\limsup_{n\rightarrow\infty} \lVert \gamma_n^{l,A_1,\cdots,A_l}\rVert_{L_{t,x}^\frac{2(d+1)}{d-2}}.\end{align}

Then ($\ref{lab5}$) allows us to use Lemma $\ref{lab31}$ to see \begin{align}\label{lab82} \lVert \gamma_n^{l,A_1,\cdots,A_l}\rVert_{L_{t,x}^\frac{2(d+1)}{d-2}}^\frac{2(d+1)}{d-2}&=\sum_{\stackrel{1\leq j\leq l}{A_j<B_l}}\,\,\sum_{A_j<\alpha\leq B_l} \lVert V^{j,\alpha}\rVert_{L_{t,x}^\frac{2(d+1)}{d-2}}^\frac{2(d+1)}{d-2}+o(1),\quad n\rightarrow\infty.\end{align}

Moreover, by ($\ref{lab22}$) it follows that \begin{align} \sum_{(j,\alpha)} \lVert V^{j,\alpha}\rVert_{L_{t,x}^\frac{2(d+1)}{d-2}}^\frac{2(d+1)}{d-2}\leq \sum_{(j,\alpha)} E((V_0^{j,\alpha},V_1^{j,\alpha}))^\frac{d+1}{d-2}.\label{lab83}\end{align}

Notice that ($\ref{lab76}$) implies $\displaystyle \sum_{(j,\alpha)} E((V_0^{j,\alpha},V_1^{j,\alpha}))$ converges, so that the right hand side in ($\ref{lab83}$) is finite.  Hence,
\begin{align}
\sum_{\stackrel{(j,\alpha)}{A_j<B_l}}\sum_{A_j<\alpha\leq B_l} \lVert V^{j,\alpha}\rVert_{L_{t,x}^\frac{2(d+1)}{d-2}}^\frac{2(d+1)}{d-2}\leq \sum_{\stackrel{(j,\alpha)}{\alpha >A_j,1\leq j\leq l}}\lVert V^{j,\alpha}\rVert_{L_{t,x}^\frac{2(d+1)}{d-2}}^\frac{2(d+1)}{d-2}\leq\left(\frac{\epsilon}{3}\right)^\frac{2(d+1)}{d-2}\label{lab84}\end{align}
for $\inf\{j+A_j:1\leq j\leq l\}$ large enough.

Then ($\ref{lab81}$), ($\ref{lab82}$) and ($\ref{lab84}$) give \begin{align} \limsup_{n\rightarrow\infty} \lVert S(t)(w_{0,n}^{l,A_1,\cdots,A_l},w_{1,n}^{l,A_1,\cdots,A_l})\rVert_{L_{t,x}^\frac{2(d+1)}{d-2}}\leq\epsilon,
\end{align}
for $\inf\{ j+A_j,1\leq j\leq l\}$ large enough.  Since $\epsilon>0$ is arbitrary, the desired conclusion ($\ref{lab78}$) follows.

Finally, the claim ($\ref{lab11}$) follows from an argument identical to the proof of Lemma $\ref{lab31}$ and will be omitted.  This completes the proof of Theorem $\ref{lab4}$.{\hfill\ensuremath{\Box}}

\begin{remark}
The smallness condition ($\ref{lab8}$) in Theorem $\ref{lab4}$, which involves the diagonal pair, can be generalized to any suitable wave admissible pair $(q,r)$.  We will use this fact in proving the existence of maximizers in the second part of this section and in Section 4.
\end{remark}
\begin{corollary}
\label{lab85}
Suppose the hypotheses of Theorem $\ref{lab4}$ hold.  Let $(q,r)$ be a wave admissible pair with $q,r\in (2,\infty)$ and satisfying the $\dot{H}^1$-scaling condition.  Then \begin{align*}\limsup_{n\rightarrow\infty} \lVert S(t)(w_{0,n}^l,w_{1,n}^l)\rVert_{L_t^qL_x^r}\mathop{\longrightarrow}_{l\rightarrow\infty}0.\end{align*}
\end{corollary}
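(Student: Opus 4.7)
The plan is to deduce the estimate for a general admissible $(q,r)$ from the diagonal smallness (\ref{lab8}) by Hölder interpolation in the mixed Lebesgue spaces, using Strichartz boundedness at a second admissible pair lying on the opposite side of the diagonal along the $\dot{H}^1$-scaling curve.

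First, the energy identity (\ref{lab9}) together with the boundedness of $(u_{0,n},u_{1,n})$ in $\dot{H}^1\times L^2$ forces $(w_{0,n}^l,w_{1,n}^l)$ to be uniformly bounded in $\dot{H}^1\times L^2(\mathbb{R}^d)$ in both $n$ and $l$, because the terms $\lVert V_0^j\rVert_{\dot{H}^1}^2+\lVert V_1^j\rVert_{L^2}^2$ on the right-hand side are nonnegative. Consequently, for every wave admissible pair $(\tilde q,\tilde r)$ with $\tilde r<\infty$ satisfying the $\dot{H}^1$-scaling condition, Lemma \ref{lab21} yields
\[
\sup_{n,l}\, \lVert S(t)(w_{0,n}^l,w_{1,n}^l)\rVert_{L_t^{\tilde q}L_x^{\tilde r}}<\infty.
\]

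Set $q_0=r_0=\frac{2(d+1)}{d-2}$. If $(q,r)=(q_0,r_0)$ the claim is (\ref{lab8}); otherwise, I will pick a wave admissible pair $(\tilde q,\tilde r)$ with $\tilde q,\tilde r\in(2,\infty)$ satisfying $\dot{H}^1$-scaling such that $q_0, q, \tilde q$ are strictly monotone, so that $q$ lies strictly between $q_0$ and $\tilde q$. This guarantees a unique $\theta\in(0,1)$ with $\frac{1}{q}=\frac{\theta}{q_0}+\frac{1-\theta}{\tilde q}$, and the common scaling $\frac{1}{q_0}+\frac{d}{r_0}=\frac{d-2}{2}=\frac{1}{\tilde q}+\frac{d}{\tilde r}$ forces the same $\theta$ to satisfy $\frac{1}{r}=\frac{\theta}{r_0}+\frac{1-\theta}{\tilde r}$ as well. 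Hölder's inequality in $x$ followed by Hölder in $t$ produces
\[
\lVert S(t)(w_{0,n}^l,w_{1,n}^l)\rVert_{L_t^{q}L_x^{r}}\leq \lVert S(t)(w_{0,n}^l,w_{1,n}^l)\rVert_{L_{t,x}^{q_0}}^{\theta}\,\lVert S(t)(w_{0,n}^l,w_{1,n}^l)\rVert_{L_t^{\tilde q}L_x^{\tilde r}}^{1-\theta},
\]
and passing to $\limsup_{n\to\infty}$ followed by $l\to\infty$ finishes the argument: the first factor tends to zero by (\ref{lab8}) and the second is bounded by the previous paragraph.

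The only point that requires real care, and hence the main obstacle, is confirming that such a pair $(\tilde q,\tilde r)$ always exists inside $(2,\infty)^2$. This is a parameterization exercise: the $\dot{H}^1$-scaling curve $\frac{1}{q}+\frac{d}{r}=\frac{d-2}{2}$ combined with the admissibility $\frac{1}{q}+\frac{d-1}{2r}\leq\frac{d-1}{4}$ traces out, within $(2,\infty)^2$, a segment on which the diagonal $q_0$ sits strictly in the interior for every $d\geq 3$, so $\tilde q$ may be chosen on whichever side of $q_0$ is required for the interpolation above.
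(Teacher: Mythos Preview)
Your argument is correct and follows the same route as the paper: interpolate between the diagonal pair $(q_0,r_0)=\bigl(\tfrac{2(d+1)}{d-2},\tfrac{2(d+1)}{d-2}\bigr)$ and a second admissible $\dot H^1$-scaling pair, with the diagonal factor going to zero by (\ref{lab8}) and the second factor uniformly bounded by Strichartz via (\ref{lab9}). The only cosmetic difference is that the paper writes down explicit second endpoints---$(\infty,\tfrac{2d}{d-2})$ when $q>q_0$ and $(\tfrac{q+2}{2},\tfrac{2d(q+2)}{dq-2q+2d-8})$ when $q<q_0$---whereas you argue abstractly that the diagonal lies strictly in the interior of the admissible $\dot H^1$-segment in $(2,\infty)^2$, so a suitable $(\tilde q,\tilde r)$ always exists.
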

\begin{proof}
We consider two cases, based on the size of $q$.  Note that the case $q=\frac{2(d+1)}{d-2}$ was settled in Theorem $\ref{lab4}$.

\underline{\bf Case $1$}:  $q>\frac{2(d+1)}{d-2}$.

Note that $(\infty,\frac{2d}{d-2})$ is a wave admissible pair satisfying the $\dot{H}^1$-scaling condition.  Choose $\alpha=1-\frac{2(d+1)}{q(d-2)}$, so that $0<\alpha<1$ and $\frac{1}{q}=\frac{\alpha}{\infty}+\frac{(1-\alpha)(d-2)}{2(d+1)}$, $\frac{1}{r}=\frac{\alpha (d-2)}{2d}+\frac{(1-\alpha)(d-2)}{2(d+1)}$.

The interpolation inequality then shows that for every $l\geq 1$,
\begin{align*}
\limsup_{n\rightarrow\infty}\lVert S(t)(w_{0,n}^l,w_{1,n}^l)\rVert_{L_t^qL_x^r}&\leq\limsup_{n\rightarrow\infty} \left(\lVert S(t)(w_{0,n}^l,w_{1,n}^l)\rVert_{L_t^\infty L_x^\frac{2d}{d-2}}^\alpha\lVert S(t)(w_{0,n}^l,w_{1,n}^l)\rVert_{L_{t,x}^\frac{2(d+1)}{d-2}}^{(1-\alpha)}\right)\\
&\leq\limsup_{n\rightarrow\infty} \left(\left[E((w_{0,n}^l,w_{1,n}^l))\right]^\frac{\alpha}{2}\lVert S(t)(w_{0,n}^l,w_{1,n}^l)\rVert_{L_{t,x}^\frac{2(d+1)}{d-2}}^{(1-\alpha)}\right)\\
&\leq\left[E((u_{0,n},u_{1,n}))\right]^\frac{\alpha}{2}\limsup_{n\rightarrow\infty} \lVert S(t)(w_{0,n}^l,w_{1,n}^l)\rVert_{L_{t,x}^\frac{2(d+1)}{d-2}}^{(1-\alpha)},
\end{align*}
where we use ($\ref{lab22}$) and ($\ref{lab9}$).  Thus, we may let $l$ tend to $\infty$ to see that,
\begin{align*}
\limsup_{n\rightarrow\infty} \lVert S(t)(w_{0,n}^l,w_{1,n}^l)\rVert_{L_t^qL_x^r}\mathop{\longrightarrow}_{l\rightarrow\infty} 0.
\end{align*}

\underline{\bf Case 2}: $q<\frac{2(d+1)}{d-2}$.

Define $q_1=\frac{q+2}{2}$, $r_1=\frac{2d(q+2)}{dq-2q+2d-8}$, and note that $(q_1,r_1)$ is a wave admissible pair with $r_1<\infty$ and satisfying the $\dot{H}^1$-scaling condition.  Thus, choosing $\alpha=\frac{(dq-2q-2d-2)(q+2)}{q(dq-2q-2d-8)}$, the interpolation inequality shows that,
\begin{align*}
\limsup_{n\rightarrow\infty} \lVert S(t)(w_{0,n}^l,w_{1,n}^l)\rVert_{L_t^qL_x^r}&\leq\limsup_{n\rightarrow\infty} \left(\lVert S(t)(w_{0,n}^l,w_{1,n}^l)\rVert_{L_t^{q_1}L_x^{r_1}}^\alpha\lVert S(t)(w_{0,n}^l,w_{1,n}^l)\rVert_{L_{t,x}^\frac{2(d+1)}{d-2}}^{(1-\alpha)}\right)\\
&\leq\limsup_{n\rightarrow\infty} \left(\left[E((w_{0,n}^l,w_{1,n}^l))\right]^\frac{\alpha}{2}\lVert S(t)(w_{0,n}^l,w_{1,n}^l)\rVert_{L_{t,x}^\frac{2(d+1)}{d-2}}^{(1-\alpha)}\right)\\
&\leq \left[E((u_{0,n},u_{1,n}))\right]^\frac{\alpha}{2}\limsup_{n\rightarrow\infty} \lVert S(t)(w_{0,n}^l,w_{1,n}^l)\rVert_{L_{t,x}^\frac{2(d+1)}{d-2}}^{(1-\alpha)},
\end{align*}
where once more we use ($\ref{lab22}$) and ($\ref{lab9}$) in the last inequality.  Thus, we may let $l$ tend to $\infty$ to see that,
\begin{align*}
\limsup_{n\rightarrow\infty} \lVert S(t)(w_{0,n}^l,w_{1,n}^l)\rVert_{L_t^qL_x^r}\mathop{\longrightarrow}_{l\rightarrow \infty}0.
\end{align*}
\end{proof}

\subsection{Existence of Maximizers}\ \\
\ \\
We start with a preliminary lemma which will be a useful tool in proving the existence of maximizers, Theorem $\ref{lab12}$.
\begin{lemma}
\label{lab86}
Suppose the hypotheses of Theorem $\ref{lab4}$ hold.  Let $(q,r)$ be a wave admissible pair with $q,r\in (2,\infty)$ and satisfying the $\dot{H}^1$-scaling condition.  Then, if $B=\min\{q,r\}$, for every $l\geq 1$, \begin{align*}\limsup_{n\rightarrow\infty} \left\lVert \sum_{j=1}^l V_n^j\right\rVert_{L_t^qL_x^r}^B\leq \sum_{j=1}^l \left\lVert V^j\right\rVert_{L_t^qL_x^r}^B.\end{align*}
\end{lemma}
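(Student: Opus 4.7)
The plan is to pass to the $B$-th power, rewrite the quantity as an $L^{q/B}_t L^{r/B}_x$ norm of a pointwise $B$-th power, and exploit the near-orthogonality of the profiles $V_n^j$. Since $B=\min\{q,r\}$, both exponents $q/B$ and $r/B$ lie in $[1,\infty)$, so $L^{q/B}_t L^{r/B}_x$ is a genuine Banach space and Minkowski's integral inequality applies to its norm. Moreover, since $q,r>2$ we have $B>2$, so Lemma \ref{lab30} is available with $p=B$.

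First I would observe that
\begin{align*}
\left\lVert \sum_{j=1}^l V_n^j\right\rVert_{L_t^qL_x^r}^B = \left\lVert\, \Bigl|\sum_{j=1}^l V_n^j\Bigr|^B\,\right\rVert_{L_t^{q/B}L_x^{r/B}}.
\end{align*}
Applying Lemma \ref{lab30} pointwise in $(t,x)$ with $p=B$ yields
\begin{align*}
\left|\sum_{j=1}^l V_n^j\right|^B \leq \sum_{j=1}^l |V_n^j|^B + C_l \sum_{j\neq k} |V_n^j|\,|V_n^k|^{B-1}.
\end{align*}
Taking the $L^{q/B}_t L^{r/B}_x$ norm and applying Minkowski's inequality gives
\begin{align*}
\left\lVert \sum_{j=1}^l V_n^j\right\rVert_{L_t^qL_x^r}^B &\leq \sum_{j=1}^l \lVert V_n^j\rVert_{L_t^qL_x^r}^B + C_l\sum_{j\neq k} \bigl\lVert |V_n^j|\,|V_n^k|^{B-1}\bigr\rVert_{L_t^{q/B}L_x^{r/B}}.
\end{align*}
The first sum on the right equals $\sum_{j=1}^l \lVert V^j\rVert_{L_t^qL_x^r}^B$ by the scaling invariance $\lVert V_n^j\rVert_{L_t^qL_x^r}=\lVert V^j\rVert_{L_t^qL_x^r}$, which is a direct consequence of the $\dot{H}^1$-scaling condition on $(q,r)$.

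It therefore remains to show that for each pair $j\neq k$ the cross-term norm $\bigl\lVert |V_n^j|\,|V_n^k|^{B-1}\bigr\rVert_{L_t^{q/B}L_x^{r/B}}$ tends to zero as $n\to\infty$. I would prove this by mimicking the argument used for Lemma \ref{lab31}. By density, assume $V^j$ and $V^k$ are continuous and compactly supported, and split into the two subcases ($\ref{lab32}$)-($\ref{lab33}$) arising from ($\ref{lab5}$). In the scale-orthogonal case ($\ref{lab32}$), WLOG $\epsilon_n^k/\epsilon_n^j\to\infty$; performing the change of variables $x\mapsto\epsilon_n^j x+x_n^j$, $t\mapsto\epsilon_n^j t+t_n^j$ transforms $V_n^j$ back to $V^j$ and reduces the cross-term integral to a bounded quantity multiplied by a factor $(\epsilon_n^j/\epsilon_n^k)^{\gamma}$ with $\gamma>0$, which tends to zero. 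In the translation-orthogonal case ($\ref{lab33}$), the same change of variables produces an integrand that is dominated by a fixed $L^{q/B}_tL^{r/B}_x$-function (using compact support of $V^j$) while converging pointwise to zero, so Lebesgue's dominated convergence theorem does the job. Once this cross-term limit is established, letting $n\to\infty$ in the displayed inequality yields the claim.

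The main obstacle is the bookkeeping in the scale-orthogonal case: one must verify that the exponent $\gamma$ resulting from the change of variables is strictly positive. This is precisely where the $\dot{H}^1$-scaling identity $\frac{1}{q}+\frac{d}{r}=\frac{d}{2}-1$ is used, exactly as it is used to obtain the scaling invariance of $\lVert V_n^j\rVert_{L_t^qL_x^r}$. Everything else is a direct transcription of the Case 1/Case 2 dichotomy from the proof of Lemma \ref{lab31}.
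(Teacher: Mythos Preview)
Your argument is correct and in fact cleaner than the paper's. The paper raises to the $r$-th power and works in $L_t^{q/r}L_x^1$, which is only a quasi-Banach space when $q<r$; to compensate, it uses the subadditivity $\lVert f+g\rVert_{L_t^{q/r}L_x^1}^{B/r}\leq \lVert f\rVert^{B/r}+\lVert g\rVert^{B/r}$ and then expands $|\sum V_n^j|^r$ by hand as $|\sum|\cdot|\sum|\cdot|\sum|^{r-2}$, splitting further according to the integer part $s=\lfloor r-2\rfloor$ and producing four families of cross terms $(Ia),(Ib),(Ic),(II)$, each of which is estimated via H\"older so as to exhibit a factor $\lVert V_n^jV_n^k\rVert_{L_t^{q/2}L_x^{r/2}}\to 0$. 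Your choice to raise to the $B$-th power instead keeps $L_t^{q/B}L_x^{r/B}$ a genuine Banach space, so Minkowski applies directly, and a single invocation of Lemma~\ref{lab30} with $p=B>2$ immediately isolates the diagonal from the cross terms $|V_n^j||V_n^k|^{B-1}$. The cross terms can then be killed either by your Lemma~\ref{lab31}-style density/change-of-variables argument (the scaling exponent you flag is indeed $\gamma=(B-1)\tfrac{d-2}{2}>0$ thanks to the $\dot H^1$-scaling condition), or even more quickly by one H\"older step $\lVert |V_n^j||V_n^k|^{B-1}\rVert_{L_t^{q/B}L_x^{r/B}}\leq \lVert V_n^jV_n^k\rVert_{L_t^{q/2}L_x^{r/2}}\lVert V^k\rVert_{L_t^qL_x^r}^{B-2}$, which reduces to the paper's (\ref{lab87}). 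Either way, your route avoids the integer/fractional-part bookkeeping entirely.
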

\begin{proof}
The idea of this proof is based on an orthogonality property that was used to prove an analogous statement for the Schr\"odinger equation; see Lemma $1.6$ in \cite{ShaoSchrodinger}, as well as Lemma $5.5$ in \cite{BegoutVargas} and a similar discussion in \cite{Gerard}.

We first claim that ($\ref{lab11}$) implies, for $j\neq k$, the limit \begin{align}\label{lab87}\lVert V_n^jV_n^k\rVert_{L_t^{q/2}L_x^{r/2}}\mathop{\longrightarrow}_{n\rightarrow\infty} 0.\end{align}  We will use this fact to complete the proof of the lemma.  The argument to prove ($\ref{lab87}$) is similar to the proof given in Corollary $\ref{lab85}$ with suitable modifications and the extra ingredient of H\"older's inequality, and therefore will be omitted.

Note that for $f,g\in L_t^{q/r}L_x^1$, we have \begin{align*}\lVert f+g\rVert_{L_t^{q/r}L_x^1}^{B/r}\leq \lVert f\rVert_{L_t^{q/r}L_x^1}^{B/r}+\lVert g\rVert_{L_t^{q/r}L_x^1}^{B/r}.\end{align*}

Thus,
\begin{align*}
\lefteqn{\left\lVert \sum_{j=1}^l V_n^j\right\rVert_{L_t^qL_x^r}^B=\left\lVert\,\left|\sum_{j=1}^l V_n^j\right|^r\,\right\rVert_{L_t^{q/r}L_x^1}^{B/r}=\left\lVert\left|\sum_{j=1}^l V_n^j\right|\left|\sum_{i=1}^l V_n^i\right|\left|\sum_{k=1}^l V_n^k\right|^{r-2}\right\rVert_{L_t^{q/r}L_x^1}^{B/r}}\\
&\leq\left\lVert \sum_{j=1}^l \left[ |V_n^j|\left(\sum_{i=1}^l |V_n^i|\right)\left|\sum_{k=1}^l V_n^k\right|^{r-2}\right]\right\rVert_{L_t^{q/r}L_x^1}^{B/r}\\
&\leq\sum_{j=1}^l \left[\left\lVert |V_n^j|^2\left|\sum_{k=1}^l V_n^k\right|^{r-2}\right\rVert_{L_t^{q/r}L_x^1}^{B/r}+\left\lVert \sum_{i\neq j, i\leq l} |V_n^j||V_n^i|\left|\sum_{k=1}^l V_n^k\right|^{r-2}\right\rVert_{L_t^{q/r}L_x^1}^{B/r}\right]\\
&=:\sum_{j=1}^l (I)_j+(II)_j.
\end{align*}

Define $s=\lfloor r-2\rfloor$, the greatest integer less than $r-2$.  Then $0\leq r-2-s<1$, so that, for $j=1,...,l$,
\begin{align*}
(I)_j&\leq\left\lVert |V_n^j|^2\left(\sum_{i=1}^l |V_n^i|\right)^{r-2-s}\left|\sum_{k=1}^l V_n^k\right|^{s}\right\rVert_{L_t^{q/r}L_x^1}^{B/r}\\
&\leq\left\lVert |V_n^j|^2\left(\sum_{i=1}^l |V_n^i|^{r-2-s}\right)\left|\sum_{k=1}^l V_n^k\right|^s\right\rVert_{L_t^{q/r}L_x^1}^{B/r}\\
&\leq\left\lVert |V_n^j|^{r-s}\left(\sum_{k=1}^l |V_n^k|\right)^s\right\rVert_{L_t^{q/r}L_x^1}^{B/r}+\sum_{i\leq l: i\neq j} \left\lVert |V_n^j|^{4+s-r}|V_n^jV_n^i|^{r-2-s}\left|\sum_{k=1}^l V_n^k\right|^s\right\rVert_{L_t^{q/r}L_x^1}^{B/r}\\
&=\left\lVert |V_n^j|^{r-s}\left(\sum_{k=1}^l |V_n^k|^s+\sum_{(k_1,k_2,\cdots,k_s)}|V_n^{k_1}V_n^{k_2}||V_n^{k_3}|\cdots|V_n^{k_s}|\right)\right\rVert_{L_t^{q/r}L_x^1}^{B/r}+\\
&\hspace{.75in}\sum_{i\leq l: i\neq j} \left\lVert |V_n^j|^{4+s-r}|V_n^jV_n^i|^{r-2-s}\left|\sum_{k=1}^l V_n^k\right|^s\right\rVert_{L_t^{q/r}L_x^1}^{B/r}\\
&\leq\sum_{k=1}^l\left\lVert |V_n^j|^{r-s}|V_n^k|^s\right\rVert_{L_t^{q/r}L_x^1}^{B/r}+\\
&\hspace{.60in}\sum_{(k_1,k_2,\cdots,k_s)}\left\lVert|V_n^j|^{r-s}|V_n^{k_1}V_n^{k_2}||V_n^{k_3}|\cdots|V_n^{k_s}|\right\rVert_{L_t^{q/r}L_x^1}^{B/r}+\\
&\hspace{.75in}\sum_{i\leq l: i\neq j} \left\lVert |V_n^j|^{4+s-r}|V_n^jV_n^i|^{r-2-s}\left|\sum_{k=1}^l V_n^k\right|^s\right\rVert_{L_t^{q/r}L_x^1}^{B/r}\\
&=\lVert |V_n^j|^r\rVert_{L_t^{q/r}L_x^1}^{B/r}+\left(\sum_{k\leq l:k\neq j} (Ia)_{j,k}\right)+\\
&\hspace{.60in}\left(\sum_{(k_1,k_2,\cdots,k_s)}(Ib)_{j,k_1,k_2,\cdots,k_s}\right)+\left(\sum_{i\leq l: i\neq j}(Ic)_{j,i}\right),
\end{align*}
where the sum of the terms $(Ib)$ is over $s$-tuples $(k_1,k_2,\cdots,k_s)$ such that at least two $k_i$s are different (without loss of generality, we assume $k_1\neq k_2$) and for each $i,j,k\in \{1,\cdots,l\}$ and $s$-tuple $(k_1,\cdots,k_s)$,
\begin{align*}
(Ia)_{j,k}&=\lVert |V_n^j|^{r-s}|V_n^k|^s\rVert_{L_t^{q/r}L_x^1}^{B/r},\\
(Ib)_{j,k_1,k_2,\cdots, k_s}&=\lVert |V_n^j|^{r-s}|V_n^{k_1}V_n^{k_2}||V_n^{k_3}|\cdots|V_n^{k_s}|\rVert_{L_t^{q/r}L_x^1}^{B/r},\\
(Ic)_{j,i}&=\left\lVert |V_n^j|^{4+s-r}|V_n^jV_n^i|^{r-2-s}\left|\sum_{k=1}^l V_n^k\right|^s\right\rVert_{L_t^{q/r}L_x^1}^{B/r}.
\end{align*}
Note that for each $j=1,\cdots, l$, $k\neq j$, if $r-s\leq s$, then
\begin{align}
\nonumber (Ia)_{j,k}&=\lVert |V_n^j|^{r-s}|V_n^k|^{r-s}|V_n^k|^{2s-r}\rVert_{L_t^{q/r}L_x^1}^{B/r}\\
\nonumber &\leq\lVert |V_n^jV_n^k|^{r-s}\rVert_{L_t^\frac{q}{2(r-s)}L_x^\frac{r}{2(r-s)}}^{B/r}\lVert |V_n^k|^{2s-r}\rVert_{L_t^\frac{q}{2s-r}L_x^\frac{r}{2s-r}}^{B/r}\\
\label{lab88}&=\lVert V_n^jV_n^k\rVert_{L_t^{q/2}L_x^{r/2}}^{B(r-s)/r}\lVert V_n^k\rVert_{L_t^qL_x^r}^{B(2s-r)/r}.
\end{align}

If $r-s>s$, then
\begin{align}
\nonumber (Ia)_{j,k}&=\lVert |V_n^j|^{s}|V_n^k|^{s}|V_n^j|^{r-2s}\rVert_{L_t^{q/r}L_x^1}^{B/r}\\
\nonumber &\leq\lVert |V_n^jV_n^k|^{s}\rVert_{L_t^\frac{q}{2s}L_x^\frac{r}{2s}}^{B/r}\lVert |V_n^j|^{r-2s}\rVert_{L_t^\frac{q}{r-2s}L_x^\frac{r}{r-2s}}^{B/r}\\
\label{lab89}&=\lVert V_n^jV_n^k\rVert_{L_t^{q/2}L_x^{r/2}}^{Bs/r}\lVert V_n^j\rVert_{L_t^qL_x^r}^{B(r-2s)/r}.
\end{align}

To estimate the terms of the form $(Ib)_{j,k_1,\cdots,k_s}$, note that for each $j=1,\cdots, l$, $(k_1,\cdots,k_s)$ with $k_1\neq k_2$,
\begin{align}
\label{lab90}(Ib)_{j,k_1,\cdots, k_s}&\leq\left(\lVert V_n^{k_1}V_n^{k_2}\rVert_{L_t^{q/2}L_x^{r/2}}\lVert |V_n^j|^{r-s}\rVert_{L_t^\frac{q}{r-s}L_x^\frac{r}{r-s}}\lVert V_n^{k_3}\rVert_{L_t^qL_x^r}\cdots\lVert V_n^{k_s}\rVert_{L_t^qL_x^r}\right)^{B/r}
\end{align}
since $\frac{r}{q}=\frac{2}{q}+\frac{r-s}{q}+(s-2)\frac{1}{q}$ and $\frac{1}{1}=\frac{2}{r}+\frac{r-s}{r}+(s-2)\frac{1}{r}$.

Note that for $j=1,\cdots,l$, $i\neq j$,
\begin{align}
\nonumber (Ic)_{j,i}&\leq\lVert |V_n^jV_n^i|^{r-2-s}\rVert_{L_t^\frac{q}{2(r-2-s)}L_x^\frac{r}{2(r-2-s)}}^{B/r}\lVert |V_n^j|^{4+s-r}\left|\sum_{k=1}^l V_n^k\right|^s\rVert_{L_t^\frac{q}{4+2s-r}L_x^\frac{r}{4+2s-r}}^{B/r}\\
\nonumber &\leq\lVert V_n^jV_n^i\rVert_{L_t^{q/2}L_x^{r/2}}^{B(r-2-s)/r}\lVert |V_n^j|^{4+s-r}\rVert_{L_t^\frac{q}{4+s-r}L_x^\frac{r}{4+s-r}}^{B/r}\lVert\left|\sum_{k=1}^l V_n^k\right|^s\rVert_{L_t^\frac{q}{s}L_x^\frac{r}{s}}^{B/r}\\
\label{lab91}&=\lVert V_n^jV_n^i\rVert_{L_t^{q/2}L_x^{r/2}}^{B(r-2-s)/r}\lVert V_n^j\rVert_{L_t^qL_x^r}^{B(4+s-r)/r}\lVert\sum_{k=1}^l V_n^k\rVert_{L_t^qL_x^r}^{Bs/r}
\end{align}

Note that for $j=1,\cdots,l$, we have
\begin{align}
\nonumber (II)_j&\leq\sum_{i\neq j, i\leq l} \left\lVert |V_n^j||V_n^i|\left|\sum_{k=1}^l V_n^k\right|^{r-2}\right\rVert_{L_t^{q/r}L_x^1}^{B/r}\\
\nonumber &\leq\sum_{i\neq j, i\leq l} \left\lVert V_n^jV_n^i\right\rVert_{L_t^{q/2}L_x^{r/2}}^{B/r}\left\lVert \left(\sum_{k=1}^l V_n^k\right)^{r-2}\right\rVert_{L_t^{\frac{q}{r-2}}L_x^\frac{r}{r-2}}^{B/r}\\
&=\sum_{i\neq j, i\leq l} \left\lVert V_n^jV_n^i\right\rVert_{L_t^{q/2}L_x^{r/2}}^{B/r}\left\lVert \sum_{k=1}^l V_n^k\right\rVert_{L_t^qL_x^r}^{B(r-2)/r}\label{lab92}
\end{align}

We now take the limit as $n$ tends to infinity in the above inequalities ($\ref{lab88}$)-($\ref{lab92}$).  As we have noted in ($\ref{lab87}$), for each $j\neq k$, $\displaystyle \lVert V_n^jV_n^k\rVert_{L_t^{q/2}L_x^{r/2}}\mathop{\longrightarrow}_{n\rightarrow\infty} 0$.  Moreover, note that for every $j\geq 1$, ($\ref{lab22}$) and ($\ref{lab9}$) give:
\begin{align*}
\limsup_{n\rightarrow\infty}\lVert V_n^j\rVert_{L_t^qL_x^r}&=\limsup_{n\rightarrow\infty}\lVert V^j\rVert_{L_t^qL_x^r}\\
&\leq C\limsup_{n\rightarrow\infty}\left[E((V_n^k(0),\partial_t V_n^k(0)))\right]^\frac{1}{2}\\
&\leq C\limsup_{n\rightarrow\infty} \left[E((u_{0,n},u_{1,n}))\right]^\frac{1}{2}.
\end{align*}
Thus, $(u_{0,n},u_{1,n})$ bounded in $\dot{H}^1\times L^2$ implies that each of the terms $(Ia)_{j,k}, (Ib)_{j,k_1,\cdots,k_s}, (Ic)_{j,i},$ and $(II)_j$ tends to $0$ as $n\rightarrow\infty$.  This completes the argument.
\end{proof}
We now focus on the proof of the main theorem in this section, which was motivated by the work of Theorem $1.2$ in \cite{ShaoSchrodinger}.

\begin{proof}[Proof of Theorem \ref{lab12}]
From the definition of the supremum, choose $(\phi_n,\psi_n)\subset\dot{H}^1\times L^2$ such that $\lVert (\phi_n, \psi_n)\rVert_{\dot{H}^1\times L^2(\mathbb{R}^d)}=1$ and $\lVert S(t)(\phi_n,\psi_n)\rVert_{L_t^qL_x^r}{\displaystyle \mathop{\longrightarrow}_{n\rightarrow\infty}} W_{q,r}$.  Then using the profile decomposition, Theorem $\ref{lab4}$, there exists a subsequence $(\phi_n,\psi_n)$, triples $(\epsilon_n^j,x_n^j,t_n^j)$, a sequence $(V_0^j,V_1^j)$, $V^j$ and $V_n^j$.

Let $\epsilon>0$ be given.  Using our choice of $(\phi_n,\psi_n)$ and Corollary $\ref{lab85}$, we choose $N_0\in\mathbb{N}$ such that for every $n,l\geq N_0$,
\begin{align}
\label{lab94}
W_{q,r}-\lVert S(t)(\phi_n,\psi_n)\rVert_{L_t^qL_x^r}&<\frac{\epsilon}{2},
\end{align}
and
\begin{align*}
\limsup_{n\rightarrow\infty} \lVert S(t)(w_{0,n}^l,w_{1,n}^l)\rVert_{L_t^qL_x^r}<\frac{\epsilon}{2}.
\end{align*}
Then, applying the definition of limsup, there exists $N_1>N_0$ such that for every $n,l\geq N_1$,
\begin{align}
\label{lab95}
\lVert S(t)(w_{0,n}^l,w_{1,n}^l)\rVert_{L_t^qL_x^r}&\leq\frac{\epsilon}{2},
\end{align}
and if $B=\min\{q,r\}$,
\begin{align}
\label{lab96}
\lVert \sum_{j=1}^l V_n^j\rVert_{L_t^qL_x^r}^B&\leq\sum_{j=1}^l \lVert V^j\rVert_{L_t^qL_x^r}^B+\epsilon,
\end{align}
where we have used Lemma $\ref{lab86}$ for ($\ref{lab96}$).

Let $n,l>N_1$ be given.  Then by applying ($\ref{lab96}$) followed by ($\ref{lab6}$)-($\ref{lab7}$) we obtain
\begin{align}
\nonumber \sum_{j=1}^l \lVert V^j\rVert_{L_t^qL_x^r}^B&\geq\lVert \sum_{j=1}^l V_n^j\rVert_{L_t^qL_x^r}^B-\epsilon\\
\nonumber &=\lVert S(t)(\phi_n,\psi_n)-S(t)(w_{0,n}^l,w_{1,n}^l)\rVert_{L_t^qL_x^r}^B-\epsilon\\
\nonumber &\geq\left(\lVert S(t)(\phi_n,\psi_n)\rVert_{L_t^qL_x^r}-\lVert S(t)(w_{0,n}^l,w_{1,n}^l)\rVert_{L_t^qL_x^r}\right)^B-\epsilon\\
\label{lab97} &\geq\left(W_{q,r}-\epsilon\right)^B-\epsilon,
\end{align}
where ($\ref{lab97}$) follows from ($\ref{lab94}$)-($\ref{lab95}$).

Choose $j_0(l)\in \{1,\cdots, l\}$ such that $\lVert V^{j_0(l)}\rVert_{L_t^qL_x^r}=\max \{\lVert V^j\rVert_{L_t^qL_x^r}:j=1,\cdots,l\}$ and $j_0(l)$ is the smallest integer for which this holds.  Then applying the Strichartz inequality we see that,
\begin{align}
\nonumber (W_{q,r}-\epsilon)^B-\epsilon&\leq\sum_{j=1}^l \lVert V^j\rVert_{L_t^qL_x^r}^B\leq\lVert V^{j_0(l)}\rVert_{L_t^qL_x^r}^{B-2}\left(\sum_{j=1}^l \lVert V^j\rVert_{L_t^qL_x^r}^2\right)\\
\nonumber &\leq W_{q,r}^{B-2}\left(\lVert V_0^{j_0(l)}\rVert_{\dot{H}^1}^2+\lVert V_1^{j_0(l)}\rVert_{L^2}^2\right)^\frac{B-2}{2}\left(\sum_{j=1}^l W_{q,r}^2\left(\lVert V_0^j\rVert_{\dot{H}^1}^2+\lVert V_1^j\rVert_{L^2}^2\right)\right)\\
\label{lab98}&\leq W^B_{q,r}\left(\lVert V_0^{j_0(l)}\rVert_{\dot{H}^1}^2+\lVert V_1^{j_0(l)}\rVert_{L^2}^2\right)^\frac{B-2}{2}\leq W^B_{q,r},
\end{align}
where we twice use the fact \begin{align}
\sum_{j=1}^l \left(\lVert V_0^j\rVert_{\dot{H}^1}^2+\lVert V_1^j\rVert_{L^2}^2\right)\leq\lim_{n\rightarrow\infty}\lVert \phi_n\rVert_{\dot{H}^1}^2+\lVert \psi_n\rVert_{L^2}^2+o(1)=1.
\label{lab100}
\end{align}
which follows from the identity ($\ref{lab9}$).

We want to consider the limit as $\epsilon$ tends to $0$ in the above chain of inequalities ($\ref{lab98}$).  However,
our choice of $j_0(l)$ depends on $l$, which in turn depends on $\epsilon$.
In Appendix B, we provide an argument to show that we may obtain ($\ref{lab98}$), with $j_0(l)$ replaced
by an index independent of $\epsilon$ for $\epsilon$ small enough.

Let $\epsilon_0$ be as selected in Appendix B.  Then for $0<\epsilon<\epsilon_0$, we have
\begin{align*}
(W_{q,r}-\epsilon)^B-\epsilon\leq W_{q,r}^B\left(\lVert V_0^{j_0(M)}\rVert_{\dot{H}^1}^2+\lVert V_1^{j_0(M)}\rVert_{L^2}^2\right)^\frac{B-2}{2}\leq W_{q,r}^B.
\end{align*}

Letting $\epsilon\rightarrow 0$, we see that $\left(\lVert V_0^{j_0(M)}\rVert_{\dot{H}^1}^2+\lVert V_1^{j_0(M)}\rVert_{L^2}^2\right)^\frac{B-2}{2}=1$, so that \begin{align*}
\lVert V_0^{j_0(M)}\rVert_{\dot{H}^1}^2+\lVert V_1^{j_0(M)}\rVert_{L^2}^2=1.
\end{align*}

Then ($\ref{lab100}$) implies that for all $j\neq j_0(M)$, $\lVert V_0^j\rVert_{\dot{H}^1}^2+\lVert V_1^j\rVert_{L^2}^2=0$ which, combined with ($\ref{lab97}$), gives that for $0<\epsilon<\epsilon_0$,
\begin{align*}
(W_{q,r}-\epsilon)^B-\epsilon\leq\sum_{j=1}^l \lVert V^j\rVert_{L_t^qL_x^r}^B=\lVert V^{j_0(M)}\rVert_{L_t^qL_x^r}^B.
\end{align*}

Taking $\epsilon\rightarrow 0$ shows that $W_{q,r}^B\leq\lVert V^{j_0(M)}\rVert_{L_t^qL_x^r}^B$, and thus, taking power $1/B$ and using the Strichartz inequality,
\begin{align*}
\lVert V^{j_0(M)}\rVert_{L_t^qL_x^r}=W_{q,r}=W_{q,r}(\lVert V_0^{j_0(M)}\rVert_{\dot{H}^1}^2+\lVert V_0^{j_0(M)}\rVert_{L^2}^2)^\frac{1}{2},
\end{align*}
and hence $(V_0^{j_0(M)},V_1^{j_0(M)})$ is a maximizing pair as desired.
\end{proof}
\section{MAXIMIZERS FOR THE $\dot{H}^s\times\dot{H}^{s-1}$-STRICHARTZ INEQUALITIES}

We now turn our attention to the inequalities ($\ref{lab13}$).  As in the previous section, we use a linear profile decomposition to prove the existence of maximizers.  In this setting, we obtain the relevant profile decomposition Theorem $\ref{lab14}$ as a consequence of Theorem $\ref{lab4}$ applied to an appropriate sequence of initial data.  Our result on the existence of maximizers then follows using an argument identical to that given in the preceding section.

\subsection{Proof of Theorem $\ref{lab14}$}\ \\
\ \\
We follow the approach in \cite{ShaoSchrodinger}.  Let $s\geq 1$ be given and let $(u_{0,n},u_{1,n})_{n\in\mathbb{N}}$ be a bounded sequence in $\dot{H}^s\times \dot{H}^{s-1}(\mathbb{R}^d)$.  For
each $n\in\mathbb{N}$, define \begin{align*}\label{lab101}f_{0,n}:= D^{s-1}u_{0,n},\quad f_{1,n}:=D^{s-1}u_{1,n},\end{align*} so that $(f_{0,n},f_{1,n})$ is a bounded
sequence in $\dot{H}^1\times L^2$.
We can then apply Theorem $\ref{lab4}$ to obtain a subsequence $(f_{0,n},f_{1,n})$, triples $(\epsilon_n^j,x_n^j,t_n^j)\in \mathbb{R}^+\times\mathbb{R}^d\times\mathbb{R}$ and a sequence $(\psi_0^j,\psi_1^j)\subset\dot{H}^1\times L^2$.  Define $\psi^j$ and $\psi_n^j$ as in the statement of the theorem.  In particular, we have for $t\in\mathbb{R}, x\in\mathbb{R}^d$, \begin{align*}S(t)(f_{0,n},f_{1,n})(x)&=\sum_{j=1}^l \psi_n^j(t,x)+S(t)(r_{0,n}^l,r_{1,n}^l)(x).\end{align*}
Let $(u_{0,n},u_{1,n})$ be the subsequence corresponding to $(f_{0,n},f_{1,n})$.  Without loss of generality, assume all $f_{0,n},f_{1,n},\psi_0^j,\psi_1^j$ are Schwartz.  Taking $D^{1-s}$ of both sides, we get
\begin{align*}
\lefteqn{S(t)(u_{0,n},u_{1,n})}\\
&=D^{1-s}(S(t)(f_{0,n},f_{1,n}))=D^{1-s}(S(t)(D^{s-1}u_{0,n},D^{s-1}u_{1,n}))\\
&=\sum_{j=1}^l D^{1-s}\left[\frac{1}{(\epsilon_n^j)^\frac{d-2}{2}}\psi^j(\frac{t-t_n^j}{\epsilon_n^j},\frac{x-x_n^j}{\epsilon_n^j})\right](t,x)+D^{1-s}[S(t)(r_{0,n}^l,r_{1,n}^l)(x)]\\
&=\sum_{j=1}^l \frac{1}{(\epsilon_n^j)^{\frac{d-2}{2}-(s-1)}}(D^{1-s}\psi^j)\left(\frac{t-t_n^j}{\epsilon_n^j},\frac{x-x_n^j}{\epsilon_n^j}\right)+S(t)(D^{1-s}r_{0,n}^l,D^{1-s}r_{1,n}^l)(x).
\end{align*}

Note that $(\partial_{tt}-\Delta) \psi^j=0$ implies $(\partial_{tt}-\Delta)(D^{1-s}\psi^j)=0$ and we set
\begin{align*}
V_0^j=(D^{1-s}\psi^j)(0,x),\quad V_1^j=(\partial_t D^{1-s}\psi^j)(0,x)
\end{align*}
so that $V^j(t,x):=S(t)(V_0^j,V_1^j)(x)=D^{1-s}\psi^j(t,x)$.  Then, if
$V_n^j:=\frac{1}{(\epsilon_n^j)^{\frac{d-2}{2}-(s-1)}}V^j\left(\frac{t-t_n^j}{\epsilon_n^j},\frac{x-x_n^j}{\epsilon_n^j}\right)$ and \begin{align*}w_{0,n}^l:= D^{1-s}r_{0,n}^l,\quad w_{1,n}^l:=D^{1-s}r_{1,n}^l, \end{align*} we have
\begin{align*}
S(t)(u_{0,n},u_{1,n})=\sum_{j=1}^l V_n^j(t,x)+S(t)(w_{0,n}^l,w_{1,n}^l)(x),
\end{align*}
which gives ($\ref{lab15}$)-($\ref{lab16}$).

For $(q,r)$ satisfying the $\dot{H}^s$-scaling condition, Sobolev's inequality implies that
\begin{align*}
\lVert S(t)(w_{0,n}^l,w_{1,n}^l)(x)\rVert_{L_t^qL_x^r}&\leq C\lVert D^{s-1}S(t)(w_{0,n}^l,w_{1,n}^l)(x)\rVert_{L_t^qL_x^\frac{rd}{d+(s-1)r}}.
\end{align*}

Then $(q,\frac{rd}{d+(s-1)r})$ is a wave admissible pair with $q,r\in (2,\infty)$ and satisfying the $\dot{H}^1$-scaling condition, so that Corollary $\ref{lab85}$ gives that \begin{align*}\lVert D^{s-1}S(t)(w_{0,n}^l,w_{1,n}^l)(x)\rVert_{L_t^qL_x^\frac{rd}{d+(s-1)r}}\mathop{\longrightarrow}_{n\rightarrow\infty} 0.\end{align*} This in turn implies $\lVert S(t)(w_{0,n}^l,w_{1,n}^l)(x)\rVert_{L_t^qL_x^r}\displaystyle{\mathop{\longrightarrow}_{n\rightarrow\infty}} 0$, which gives ($\ref{lab17}$).

To complete the theorem, it remains to verify the limit ($\ref{lab18}$).  From the profile decomposition,
\begin{align*}
\lVert f_{0,n}\rVert_{\dot{H}^1}^2+\lVert f_{1,n}\rVert_{L^2}^2&=\sum_{j=1}^l \left(\lVert \psi_0^j\rVert_{\dot{H}^1}^2+\lVert \psi_1^j\rVert_{L^2}^2\right)+\lVert r_{0,n}^l\rVert_{\dot{H}^1}^2+\lVert r_{1,n}^l\rVert_{L^2}^2+o(1),
\end{align*}
so that
\begin{align*}
\lefteqn{\lVert u_{0,n}\rVert_{\dot{H}^s}^2+\lVert u_{1,n}\rVert_{\dot{H}^{s-1}}^2}&\\
&\hspace{0.75in}=\lVert f_{0,n}\rVert_{\dot{H}^1}^2+\lVert f_{1,n}\rVert_{L^2}^2\\
&\hspace{0.75in}=\sum_{j=1}^l \left(\lVert D^{1-s}\psi_0^j\rVert_{\dot{H}^s}^2+\lVert D^{1-s}\psi_1^j\rVert_{\dot{H}^{s-1}}^2\right)+\lVert D^{1-s}r_{0,n}^l\rVert_{\dot{H}^s}^2+\lVert D^{1-s}r_{1,n}^l\rVert_{\dot{H}^{s-1}}^2+o(1)\\
&\hspace{0.75in}=\sum_{j=1}^l \left(\lVert V_0^j\rVert_{\dot{H}^s}^2+\lVert V_1^j\rVert_{\dot{H}^{s-1}}^2\right)+\lVert w_{0,n}^l\rVert_{\dot{H}^s}^2+\lVert w_{1,n}^l\rVert_{\dot{H}^{s-1}}^2+o(1).
\end{align*}
The proof of Theorem $\ref{lab14}$ is now complete.
\subsection{Existence of Maximizers}\ \\
\ \\
We now arrive to the proof of Theorem $\ref{lab20}$.  Let us note that an argument similar to the proof of Lemma $\ref{lab31}$ shows that the analogue of ($\ref{lab11}$) holds in the setting of Theorem $\ref{lab14}$.  More precisely, if the hypotheses of Theorem $\ref{lab14}$ are satisfied, then for every $j\neq k$, \begin{align*}\lVert V_n^jV_n^k\rVert_{L_{t,x}^\frac{d+1}{d-2s}}\mathop{\longrightarrow}_{n\rightarrow\infty} 0.\end{align*}  We can then use this result to prove the analogue of Lemma $\ref{lab86}$ for the $\dot{H}^s$-scaling condition.  The proof of Theorem $\ref{lab20}$ then proceeds almost identically to that of Theorem $\ref{lab12}$, using the Strichartz inequality ($\ref{lab23}$) for the space $\dot{H}^s\times \dot{H}^{s-1}$ in place of $\dot{H}^1\times L^2$.

\section{APPENDIX A}
In this appendix we give the proof of Lemma $\ref{lab25}$, adapting the case $d=3$ given in \cite{BahouriGerard} to higher dimensions.
\begin{proof}[Proof of Lemma $\ref{lab25}$] \
For every $A>0$, we define $u_{<A},u_{>A}$ by
\[ \hat{u}_{<A}(\xi)=\chi_{|\xi|\leq A} \hat{u}(\xi), \quad \hat{u}_{<A}(\xi)=\chi_{|\xi|> A} \hat{u}(\xi). \]
We have
\begin{align}
\lVert u_{<A}\rVert_{L^\infty} &\leq (2\pi)^{-\frac{d}{2}}\lVert\hat{u}_{<A}\rVert_{L^1}\nonumber\\
                      &= (2\pi)^{-\frac{d}{2}}\lVert\chi_{|\xi|\leq A}\hat{u}(\xi)\rVert_{L^1}\nonumber\\
                      &\leq (2\pi)^{-\frac{d}{2}}\sum_{k\in \mathbb{Z},k\leq k(A)}\lVert\chi_{2^k\leq |\xi| \leq 2^{k+1}}\hat{u}(\xi)\rVert_{L^1}\label{lab102}
\end{align}
where $k(A)$ is the largest integer such that $2^{k(A)}\leq A$.\\
Now, Schwarz's inequality gives,
\begin{align*}
\lVert\chi_{2^{k} \leq \xi\leq 2^{k+1}} \hat{u}\rVert_{L^{1}}& = \int|\chi_{2^k \leq \xi\leq 2^{k+1}}\frac{\xi}{\xi}\hat{u}(\xi)| d\xi\\
                                                 &\leq(\int|\frac{1}{|\xi|^{2}}\chi_{2^k \leq \xi\leq 2^{k+1}} d\xi)^\frac{1}{2}
                                                       (\int\chi_{2^k \leq \xi\leq 2^{k+1}}|\xi|^{2}|\hat{u}(\xi)| d\xi)^\frac{1}{2}\\
                                                 &\leq ( \frac{1}{2^k}2^\frac{dk}{2}) I_k (\nabla u)\\
                                                 &\leq ( \frac{1}{2^k}2^\frac{dk}{2}) \lVert\nabla u\rVert_B.
\end{align*}
Combining this with ($\ref{lab102}$), we obtain
\begin{align}
\lVert u_{<A}\rVert_{L^\infty}&\leq KA^{1/2}\lVert \nabla u\rVert_B,\label{lab103}
\end{align}
where $K$ is some positive constant.\\
We also have,
\begin{align}
\lVert u\rVert^p_{L^p}=p\int^\infty_0 \lambda^{p-1} m\{|u|>\lambda\}d\lambda\label{lab104}
\end{align}
where $m$ is the Lebesgue measure on $\mathbb{R}^d$ and we set
\[ A(\lambda)=(\frac{\lambda}{2K \lVert\nabla u\rVert_B})^\frac{2}{d-2}.\]
Then by ($\ref{lab103}$)
\begin{align*}
\lVert u_{<A(\lambda)}\rVert_{L^{\infty}}\leq K(A(\lambda))^{\frac{d-2}{2}}\lVert \nabla u\rVert_B =  \frac{\lambda}{2}
\end{align*}
and hence,
\begin{align*}
  m\{|u|>\lambda\}\leq m\{u_{>A(\lambda)}>\frac{\lambda}{2}\}\leq \frac{4}{\lambda^2}\lVert u_{>A(\lambda)}\rVert_{L^2(\mathbb{R}^d)}^2=\frac{4}{\lambda^2}\lVert \hat u_{>A(\lambda)}\rVert_{L^2(\mathbb{R}^d)}^2
\end{align*}
Coming back to ($\ref{lab104}$) and plugging in this estimate, we have,
\begin{align*}
\lVert u\rVert^p_{L^p}&= p\int^\infty_0 \lambda^{p-1} m\{|u|>\lambda\}d\lambda\\
&\leq 4p\int^{\infty}_0 \lambda^{p-3}\left(\int_{|\xi|>A(\lambda)} |\hat u(\xi)|^2d\xi\right)d\lambda\\
&= 4p \frac{\left(2K\lVert \nabla u\rVert_B\right)^{p-2}}{p-2}\int_{\mathbb{R}^d} |\xi|^2 |\hat u(\xi)|^2 d\xi\\
&= C\lVert\nabla u\rVert_{L^2(\mathbb{R}^d)}^2\lVert \nabla u\rVert_B^{p-2},
\end{align*}
where we use Fubini's theorem in the second inequality.  Hence, we obtain the desired inequality ($\ref{lab26}$).
\end{proof}

\section{APPENDIX B}
In this appendix, we provide an argument to remove the dependence of $j_0(l)$ on $\epsilon$ in the chain of inequalities ($\ref{lab98}$).

Note that taking $l\rightarrow\infty$ in ($\ref{lab100}$)
shows that \begin{align*}\lVert V_0^j\rVert_{\dot{H}^1}^2+\lVert V_1^j\rVert_{L^2}^2\mathop{\rightarrow}_{j\rightarrow\infty} 0,\end{align*} so
that we can choose $M\in\mathbb{N}$ such that for all $j\geq M$,
\begin{align*}\lVert V_0^j\rVert_{\dot{H}^1}^2+\lVert V_1^j\rVert_{L^2}^2<\left(\frac{1}{2}\right)^\frac{2}{B-2}.\end{align*}

Let $\epsilon_0>0$ be small enough such that $(W_{q,r}-\epsilon_0)^B-\epsilon_0>\frac{W_{q,r}^B}{2}>0$.  Then for each $0<\epsilon<\epsilon_0$, \begin{align}
\label{lab105}
(W_{q,r}-\epsilon)^B-\epsilon>\frac{W_{q,r}^B}{2}.
\end{align}

For each $0<\epsilon<\epsilon_0$, applying the argument at the beginning of the proof of Theorem $\ref{lab12}$ with $\epsilon$ to find $N_1\in\mathbb{N}$ as stated, we claim that if $l>\max\{N_1,M\}$, then $j_0(l)=j_0(M)$.  To see this, let $l>\max\{N_1,M\}$ be given.  Then, combining ($\ref{lab105}$) with ($\ref{lab98}$), we obtain
\begin{align*}
\frac{1}{2}<\frac{(W_{q,r}-\epsilon)^B-\epsilon}{W_{q,r}^B}\leq (\lVert V_0^{j_0(l)}\rVert_{\dot{H}^1}^2+\lVert V_1^{j_0(l)}\rVert_{L^2}^2)^\frac{B-2}{2}
\end{align*}
so that the choice of $M$ implies $j_0(l)<M$, and thus $\lVert V^{j_0(M)}\rVert_{L_t^qL_x^r}=\max \{\lVert V^j\rVert:i=1,\cdots,M\}\geq \lVert V^{j_0(l)}\rVert_{L_t^qL_x^r}$.  But we have $\lVert V^{j_0(M)}\rVert_{L_t^qL_x^r}\leq \lVert V^{j_0(l)}\rVert_{L_t^qL_x^r}$ so that $\lVert V^{j_0(M)}\rVert_{L_t^qL_x^r}=\lVert V^{j_0(l)}\rVert_{L_t^qL_x^r}=\max\{ \lVert V^j\rVert_{L_t^qL_x^r}:j=1,\cdots,l\}$.  Then $j_0(l)$ the smallest integer such that $\lVert V^{j_0(l)}\rVert=\max\{ \lVert V^j\rVert:j=1,\cdots,l\}$ implies $j_0(l)\leq j_0(M)$.  Note also that $M\leq l$ implies $j_0(M)\leq j_0(l)$ and therefore, $j_0(l)=j_0(M)$.

\subsection*{Acknowledgements} I am deeply grateful to my advisors William Beckner and Nata\v{s}a Pavlovi\'{c} for all of their generous support and encouragement during the preparation of this work.  I am also indebted to Nata\v{s}a Pavlovi\'{c} for suggesting the problem and for many useful comments that helped to significantly improve the manuscript.  I would also like to thank Magdalena Czubak for useful discussions related to the profile decomposition and Shuanglin Shao for helpful comments.

\begin{bibdiv}
\begin{biblist}

\bib{BahouriGerard}{article}{
      author={Bahouri, Hajer},
      author={G{\'e}rard, Patrick},
       title={High frequency approximation of solutions to critical nonlinear
  wave equations},
        date={1999},
        ISSN={0002-9327},
     journal={Amer. J. Math.},
      volume={121},
      number={1},
       pages={131\ndash 175},
}

\bib{BegoutVargas}{article}{
      author={B{\'e}gout, Pascal},
      author={Vargas, Ana},
       title={Mass concentration phenomena for the {$L\sp 2$}-critical
  nonlinear {S}chr\"odinger equation},
        date={2007},
        ISSN={0002-9947},
     journal={Trans. Amer. Math. Soc.},
      volume={359},
      number={11},
       pages={5257\ndash 5282},
}

\bib{BennettBezCarberyHundertmark}{article}{
      author={Bennett, Jonathan},
      author={Bez, Neal},
      author={Carbery, Anthony},
      author={Hundertmark, Dirk},
       title={Heat-flow monotonicity of Strichartz norms},
     journal={arXiv:0809.4783},
}

\bib{BrezisCoron}{article}{
	author={Brezis, H.},
	author={Coron, J.-M.},
	title={Convergence of solutions of {$H$}-systems or how to blow
              bubbles},
              journal={Arch. Rational Mech. Anal.},
              volume={89},
              year={1985},
              number={1},
              pages={21\ndash 56}
}

\bib{CarlesKeraani}{article}{
      author={Carles, R{\'e}mi},
      author={Keraani, Sahbi},
       title={On the role of quadratic oscillations in nonlinear
  {S}chr\"odinger equations. {II}. {T}he {$L\sp 2$}-critical case},
        date={2007},
        ISSN={0002-9947},
     journal={Trans. Amer. Math. Soc.},
      volume={359},
      number={1},
       pages={33\ndash 62 (electronic)},
}

\bib{Carneiro}{article}{
      author={Carneiro, Emanuel},
       title={A sharp inequality for the Strichartz norm},
        date={2009},
     journal={Int. Math. Res. Not.},
}

\bib{Foschi}{article}{
      author={Foschi, Damiano},
       title={Maximizers for the {S}trichartz inequality},
        date={2007},
        ISSN={1435-9855},
     journal={J. Eur. Math. Soc. (JEMS)},
      volume={9},
      number={4},
       pages={739\ndash 774},
}

\bib{Gerard}{article}{
      author={G{\'e}rard, Patrick},
       title={Description du d\'efaut de compacit\'e de l'injection de
  {S}obolev},
        date={1998},
        ISSN={1292-8119},
     journal={ESAIM Control Optim. Calc. Var.},
      volume={3},
       pages={213\ndash 233 (electronic)},
}

\bib{GinibreVelo}{article}{
      author={Ginibre, J.},
      author={Velo, G.},
       title={Generalized {S}trichartz inequalities for the wave equation},
        date={1995},
        ISSN={0022-1236},
     journal={J. Funct. Anal.},
      volume={133},
      number={1},
       pages={50\ndash 68},
}

\bib{HundertmarkZharnitsky}{article}{
      author={Hundertmark, Dirk},
      author={Zharnitsky, Vadim},
       title={On sharp {S}trichartz inequalities in low dimensions},
        date={2006},
        ISSN={1073-7928},
     journal={Int. Math. Res. Not.},
       pages={Art. ID 34080, 18},
}

\bib{KeelTao}{article}{
      author={Keel, Markus},
      author={Tao, Terence},
       title={Endpoint {S}trichartz estimates},
        date={1998},
        ISSN={0002-9327},
     journal={Amer. J. Math.},
      volume={120},
      number={5},
       pages={955\ndash 980},
}

\bib{KenigMerleSupercritical}{article}{
      author={Kenig, Carlos~E.},
      author={Merle, Frank},
       title={Nondispersive radial solutions to energy supercritical non-linear
  wave equations, with applications},
     journal={arXiv:0810.4834},
}

\bib{KenigMerleNLS}{article}{
      author={Kenig, Carlos~E.},
      author={Merle, Frank},
       title={Global well-posedness, scattering and blow-up for the
  energy-critical, focusing, non-linear {S}chr\"odinger equation in the radial
  case},
        date={2006},
        ISSN={0020-9910},
     journal={Invent. Math.},
      volume={166},
      number={3},
       pages={645\ndash 675},
}

\bib{KenigMerleWave}{article}{
      author={Kenig, Carlos~E.},
      author={Merle, Frank},
       title={Global well-posedness, scattering and blow-up for the
  energy-critical focusing non-linear wave equation},
        date={2008},
        ISSN={0001-5962},
     journal={Acta Math.},
      volume={201},
      number={2},
       pages={147\ndash 212},
}

\bib{Keraani}{article}{
      author={Keraani, Sahbi},
       title={On the defect of compactness for the {S}trichartz estimates of
  the {S}chr\"odinger equations},
        date={2001},
        ISSN={0022-0396},
     journal={J. Differential Equations},
      volume={175},
      number={2},
       pages={353\ndash 392},
}

\bib{KillipTaoVisan}{article}{
      author={Killip, Rowan},
	author={Tao, Terence},
      author={Visan, Monica},
      title={The cubic nonlinear Schrodinger equation in two dimensions with radial data},
      	journal={J. Eur. Math. Soc.}
	volume={11},
	year={2009},
	pages={ 1203\ndash 1258},

}

\bib{KillipVisan}{article}{
      author={Killip, Rowan},
      author={Visan, Monica},
       title={The focusing energy-critical nonlinear schrödinger equation in
  dimensions five and higher},
     journal={arXiv:0804.1018},
}

\bib{Kunze}{article}{
      author={Kunze, Markus},
       title={On the existence of a maximizer for the {S}trichartz inequality},
        date={2003},
        ISSN={0010-3616},
     journal={Comm. Math. Phys.},
      volume={243},
      number={1},
       pages={137\ndash 162},
}

\bib{MerleVega}{article}{
      author={Merle, F.},
      author={Vega, L.},
       title={Compactness at blow-up time for {$L\sp 2$} solutions of the
  critical nonlinear {S}chr\"odinger equation in 2{D}},
        date={1998},
        ISSN={1073-7928},
     journal={Internat. Math. Res. Notices},
      number={8},
       pages={399\ndash 425},
}

\bib{Segal}{article}{
      author={Segal, Irving},
       title={Space-time decay for solutions of wave equations},
        date={1976},
        ISSN={0001-8708},
     journal={Advances in Math.},
      volume={22},
      number={3},
       pages={305\ndash 311},
}

\bib{ShaoAiry}{article}{
      author={Shao, Shuanglin},
       title={The linear profile decomposition for the airy equation and the
  existence of maximizers for the Airy Strichartz inequality},
     journal={arXiv:0809.0157v4},
}

\bib{ShaoSchrodinger}{article}{
      author={Shao, Shuanglin},
       title={{Maximizers for the Strichartz inequalities and the
  Sobolev-Strichartz inequalities for the Schr\"odinger equation}},
        date={2009},
     journal={Electron. J. Differential Equations},
      volume={2009},
      number={3},
       pages={1\ndash 13},
}

\bib{Strichartz}{article}{
      author={Strichartz, Robert~S.},
       title={Restrictions of {F}ourier transforms to quadratic surfaces and
  decay of solutions of wave equations},
        date={1977},
        ISSN={0012-7094},
     journal={Duke Math. J.},
      volume={44},
      number={3},
       pages={705\ndash 714},
}

\bib{Struwe}{article}{
    AUTHOR = {Struwe, Michael},
     TITLE = {A global compactness result for elliptic boundary value
              problems involving limiting nonlinearities},
   JOURNAL = {Math. Z.},
    VOLUME = {187},
      YEAR = {1984},
    NUMBER = {4},
     PAGES = {511--517},
}

\bib{TaoVisanZhang}{article}{
    AUTHOR = {Tao, Terence},
    author= {Visan, Monica},
    author={ Zhang, Xiaoyi},
     TITLE = {Minimal-mass blowup solutions of the mass-critical {NLS}},
   JOURNAL = {Forum Math.},
  FJOURNAL = {Forum Mathematicum},
    VOLUME = {20},
      YEAR = {2008},
    NUMBER = {5},
     PAGES = {881--919},
}

\end{biblist}
\end{bibdiv}
\end{document}